\tikzset{cross/.style={cross out, draw, 
		minimum size=2*(#1-\pgflinewidth), 
		inner sep=0pt, outer sep=0pt}}
\title{Quasi-Random Influences of Boolean Functions}
\author{Fan Chung and Nicholas Sieger}
\theoremstyle{plain}
\newtheorem{prop}{Proposition}[section]
\newtheorem{thm}[prop]{Theorem}
\newtheorem{conj}[prop]{Conjecture}
\newtheorem{lem}[prop]{Lemma}
\newtheorem{property}{Property}
\theoremstyle{definition}
\newtheorem{defn}[prop]{Definition}
\newtheorem{clm}[prop]{Claim}
\newtheorem{ex}[prop]{Example}
\theoremstyle{remark}
\newtheorem{rmk}[prop]{Remark}
\newtheorem{ques}[prop]{Question}
\newcommand{\N}{\mathbb{N}}
\newcommand{\Z}{\mathbb{Z}}
\newcommand{\R}{\mathbb{R}}
\newcommand{\C}{\mathbb{C}}
\newcommand{\F}{\mathbb{F}}
\newcommand{\Zp}[1]{\Z/#1\Z}
\DeclareMathOperator{\diam}{diam}
\newcommand{\half}{\frac{1}{2}}
\newcommand{\bone}{\mathbf{1}}
\DeclarePairedDelimiter{\lrceil}{\lceil}{\rceil}
\DeclarePairedDelimiter{\iprod}{\langle}{\rangle}
\DeclareMathOperator{\rank}{rank}
\DeclareMathOperator{\Range}{Range}
\DeclareMathOperator{\exv}{\mathbb{E}}
\DeclareMathOperator{\prob}{\mathbb{P}}
\newcommand{\gownorm}[2]{\Vert #1\Vert_{U(#2)}}
\newcommand{\fourier}[2]{\widehat{#1}\left(#2\right)}
\DeclareMathOperator{\emb}{em}
\DeclareMathOperator{\bhom}{bhom}
\DeclareMathOperator{\Subdiv}{Sub}
\newcommand{\spm}{\{1,-1\}}
\DeclareMathOperator{\Inf}{I}
\begin{document}
	\maketitle
	\begin{abstract}
		 We examine a hierarchy of equivalence classes of quasi-random properties of Boolean Functions. In particular, we prove an equivalence between a number of properties including balanced influences, spectral discrepancy, local strong regularity,  homomorphism enumerations of colored or weighted graphs and hypergraphs associated with Boolean functions as well as the $k$th-order strict avalanche criterion amongst others. We further construct  families of quasi-random boolean functions which exhibit the properties of our equivalence theorem and separate the levels of our hierarchy. 
	\end{abstract}
	
	\section{Introduction}
		We consider {\it Boolean Functions} that map binary strings of length $n$ to $\{\textit{True},\textit{False}\}$.  As boolean functions can encode a wide variety of mathematical and computational objects, such as decision problems, error-correcting codes, communication and cryptographic protocols, voting rules, propositional formulas, and arbitrary subsets of the set of binary strings, these functions are extremely well-studied in computer science, coding theory, cryptography, and data sciences to name a few areas of applications. For each application, many researchers have developed tools and perspectives unique to each area to study these boolean functions and have isolated key properties of boolean functions, for instance as the sensitivity of the function to changes in each coordinate, the size of its Fourier coefficients, or the distance of its support viewed as a binary code.
		
		The goal of this paper is to organize some of these properties of boolean functions into a hierarchy of quasi-random equivalence classes in the same style as the quasi-random equivalence for graphs proven by Chung, Graham, and Wilson in \cite{Chung1989} (for details, see section \secref{sec main result}). In our main theorem, we show how a number of known analytic properties of boolean functions, such as the $k$-th order strict avalanche criterion, restrictions of the function having small Fourier coefficients, and discrepancy of the Fourier coefficients, can be either strengthened or weakened so as to become equivalent to one another in the same sense as in \cite{Chung1989}. Motivated by the enumeration of ``sub-patterns'' within a larger object, we further show that several combinatorial properties of graphs built from our boolean function are equivalent to these analytic properties. These combinatorial properties include counts of rainbow embeddings of graphs and a co-degree condition on graphs defined from the boolean function. Finally, we give an explicit construction of a family of boolean functions which exhibits the properties in our main theorem. As it turns out, our construction depends crucially on the existence of good binary codes. 
		
		Our work continues the study of quasi-randomness of graphs and hypergraphs in the work of Chung, Graham, and Wilson \cite{Chung1989}.  Quasi-randomness theorems exist for other combinatorial objects, including Griffiths' results on oriented graphs \cite{Griffiths2011}, Cooper's work on permutations \cite{Cooper2004}, Chung and Graham's work on tournaments \cite{Chung1991b} and subsets of $\Z/N\Z$ \cite{Chung1992}, $k$-uniform linear Hypergraphs in works of Freidman and Widgerson \cite{friedman1989second} along with R{\"o}dl, Schacht, and Kohawakaya \cite{Kohayakawa2010} and finally Lenz and Mubayi \cite{Lenz2015}, and $k$-uniform general hypergraphs in papers of Chung and Graham \cite{Chung1990,Chung1990a,Chung1991a,Chung1992a}, Frankl, R{\"o}dl, Schacht, Kohawakaya, and Nagle in \cite{frankl1992uniformity,rodl2007counting,rodl2007regularity,Nagle2006,Kohayakawa2003}, surveyed  in the papers of Gowers \cite{Gowers2006,Gowers2007}. There are also several extant theories of quasi-randomness for boolean functions, implicitly in Chung and Graham's work on subsets of $\Z/N\Z$ \cite{Chung1992} and explicitly in O'Donnell's textbook \cite{o'donnell_2014}, Castro-Silva's monograph \cite{castro-silva2021quasirandomness}, and Chung and Tetali's work on communication complexity \cite{chung1993communication}. We shall later prove that our theory of quasi-random boolean functions is distinct from each of these theories and is stronger than several of these theories and incomparable with the others. 
		
		Our paper is organized as follows. In section \secref{sec defns}, we give the necessary preliminaries to state our quasi-random properties. Then we define the quasi-random properties and present our main results in section \secref{sec main result}. We define the properties in extant theories of quasi-randomness for boolean functions in section \secref{sec previous theories}.  The main results of this paper are summarized in two flowcharts as seen in Figures \ref{figure diagram of main proof} and \ref{figure relations between different theories}. The proof of our main equivalence theorem appears in section \secref{sec main proof}, followed by the construction of our quasi-random functions in section \secref{sec constructions}. We then prove our comparison theorems which place our theory of quasi-randomness relative to several extant theories in section \secref{sec comparisons}. Finally, we conclude with some remarks and open problems in section \secref{sec conclusion}.

	\section{Preliminaries}\label{sec defns}
		 We refer the reader to O'Donnell's book \cite{o'donnell_2014} for any undefined terminology. We identify the set of binary strings $\{0,1\}^n$ with elements of $\F_2^n$ via a choice of basis for $\F_2^n$, and then define a \textit{boolean function} to be a map $f:\F_2^n\to \spm$. We then equip the space of all maps $g:\F_2^n\to \C$ with the following inner product:
		\[
			\iprod{f,g} := \exv_{x\in\F_2^n} f(x)\overline{g(x)} = \frac{1}{2^n} \sum_{x\in \F_2^n} f(x)\overline{g(x)}
		\] where $\overline{z}$ denotes complex conjugation of $z\in \C$. For each $\gamma\in \F_2^n$, the \textit{Fourier character} $\chi_{\gamma}:\F_2^n\to \spm$ is 
		\[
			\chi_{\gamma}(x) := (-1)^{\gamma\cdot x}
		\] where $\gamma\cdot x := \sum_{i = 1}^{n} \gamma_ix_i$ is the usual dot product. The Fourier characters are an orthonormal basis for the space of all maps $g:\F_2^n\to \C$ with the inner product as defined above. Therefore, every function $g:\F_2^n\to \C$ has unique \textit{Fourier coefficients} $\fourier{g}{\gamma}$ where $\fourier{f}{\gamma} = \iprod{g,\chi_{\gamma}}$. The \textit{convolution} of two functions $g\text{ and }h:\F_2^n\to \C$ is 
		\[
			(g*h)(x) := \exv_{y\in\F_2^n} f(x + y)\overline{g(y)}.
		\] Note that $\fourier{g*h}{\gamma} = \fourier{g}{\gamma}\fourier{h}{\gamma}$.

		We will also need to track the size of individual vectors in $\F_2^n$. For a vector $x\in \F_2^n$, its  \textit{Hamming weight}, denoted $|x|$, is $\abs{\{i\in [n]: x_i = 1\}}$. Similarly, the \textit{Hamming distance} between two vectors $x\text{ and }y\in \F_2^n$ is $\abs{x - y}$. For a subset $S\subseteq \F_2^n$, its \textit{diameter} is $\diam(S) := \max_{x,y\in S} \abs{x - y}$. The \textit{Hamming ball} of radius $d$ in $\F_2^n$ and centered at the vector $x\in \F_2^n$, denoted by $B_d(n,x)$, is $\{y\in \F_2^n: \abs{x - y} \leq d\}$. 
		
		For a proposition $P(x)$, let $[P(x)] := \begin{cases}
		1 & P(x)\\
		0 & \neg P(x)
		\end{cases}$ denote the \textit{indicator function} for $P(x)$. We will write $0$ for the zero vector in $\F_2^n$ throughout, and write $\bone\in \F_2^n$ for the all-ones vector. If $\mathcal{S}$ is a distribution on a set $S$, and $P(x)$ is a proposition on the variable $x\in S$, then
		\[
			\prob_{x\sim \mathcal{S}}\left[P(x)\right]
		\] will denote the probability that $P(x)$ holds where $x$ is drawn from the distribution $\mathcal{S}$. 
		 Whenever we write the expectation or probability over a set, such as $\exv_{x\in \F_2^n}$, the expectation or probability is taken with respect to the uniform distribution.
		 
		\smallskip
		Here we state the definitions concerning various aspects of Boolean functions that will be used later.
	\subsection{The influences of Boolean functions}	
	The notion of ``influences''  is prominent in both analysis of boolean functions and in cryptography. 
			\begin{defn}\label{defn directional influence}
				For $\gamma\in \F_2^n$, the \textit{$\gamma$-Influence} of $f$ is
				\[
				\Inf_{\gamma}[f] := \prob_{x\in\F_2^n}\left[f(x) \neq f(x + \gamma)\right].
				\]
			\end{defn}
			Note that $\Inf_{0}[f]$ is always $0$. Furthermore, for $\gamma\in \F_2^n$ with $\gamma_i = 1$ and $\gamma_j = 0$ for $j \neq i$, $\Inf_{\gamma}[f]$ is precisely the influence of coordinate $i$ as studied extensively in O'Donnell \cite{o'donnell_2014}.
	\subsection{The spectral sampling of Boolean functions}
	Parseval's Theorem states that for $f:\F_2^n\to \spm$ 
			\[
				\sum_{\gamma\in\F_2^n} \fourier{f}{\gamma}^2 = \exv_{x\in\F_2^n} \left[f(x)^2 \right] = 1.
			\] Thus the Fourier coefficients of $f$ define a probability distribution on $\F_2^n$ as follows:

				\begin{defn}\label{defn spectral sample}
					For a fixed boolean function $f:\F_2^n\to\spm$, the \textit{Spectral Sample} $\mathcal{S}_f$ is the distribution on $\F_2^n$ where
					\[
					\prob_{\gamma\sim \mathcal{S}_f}\left[\gamma = \delta\right] = \fourier{f}{\delta}^2
					\] for a fixed $\delta\in\F_2^n$.
				\end{defn}

		\subsection{Subcubes and the counting of subcubes}
		Let $[n]$ denote the set $\{1,\dots,n\}$, and for $S\subseteq [n]$, let $\overline{S}$ denote $[n]\setminus S$. Given a set $S\subseteq [n]$, and two vectors $x\in \F_2^S$, $y\in \F_2^{\overline{S}}$, let $x\underset{S}{\otimes} y$ denote the vector where
		\[
			(x\underset{S}{\otimes} y)_i = \begin{cases}
			x_i & i \in S\\
			y_i & i\in \overline{S}
			\end{cases}.
		\] The \textit{Subcube} defined by a set $S\subseteq [n]$ and a vector $z\in \F_2^{\overline{S}}$ is the set
		\[
			C(S,z) := \{x\underset{S}{\otimes} z: x\in \F_2^S\}.
		\] We say that the \textit{dimension} of a subcube $C(S,z)$ is $\abs{S}$. Note that $C([n],\eta)$ where $\eta$ is the empty string is precisely the hypercube $Q_n$. In Figure \ref{figure subcube examples}, we have two examples of subcubes. We are also concerned about boolean functions restricted to a subcube:
		\begin{defn}\label{defn restrictions}
			The \textit{restriction} of $f:\F_2^n\to \spm $ to the subcube $C(S,z)$ is the boolean function $f|_{S,z}:\F_2^S\to \spm$ defined by
			\[
			f|_{S,z}(x) = f(x\underset{S}{\otimes} z)
			\]
		\end{defn} 
		If $S = \emptyset$, then $f|_{S,z}(x)$ is the constant function $f(z)$, and if $S = [n]$, then we recover $f$ itself.  
		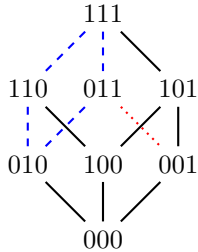
\begin{figure}[htbp]
			\centering
			\begin{tikzpicture}
				\node (000) at (0,0) {$000$};
				
				\node (001) at (1,1) {$001$};
				\node (010) at (-1,1) {$010$};
				\node (100) at (0,1) {$100$};
				
				\node (101) at (1,2) {$101$};
				\node (011) at (0,2) {$011$};
				\node (110) at (-1,2) {$110$};
				
				\node (111) at (0,3) {$111$};
				
				\draw[thick] (000) -- (010);
				\draw[thick] (000) --  (100);
				\draw[thick] (000) -- (001);

				\draw[thick] (001) -- (101);
				\draw[thick] (101) -- (100);
				\draw[thick] (100) -- (110);
				\draw[blue, dashed,thick] (110) -- (010);
				\draw[blue, dashed,thick] (010) -- (011);
				\draw[red,dotted,thick] (011) -- (001);

				\draw[thick] (111) -- (101);
				\draw[blue, dashed,thick] (111) -- (011);
				\draw[blue, dashed,thick] (111) -- (110);
			\end{tikzpicture}
			\caption{The blue dashed lines in the figure indicate the $2$-dimensional subcube $C(\{1,3\},1)$, i.e. the set of all length $3$ binary strings with a $1$ in the second coordinate. The red dotted line indicates the $1$-dimensional subcube $C(\{2\},01)$.}\label{figure subcube examples}
		\end{figure}


\subsection{Combinatorial aspects of Boolean functions}	
	Here we give several useful combinatorial interpretations of Boolean functions that are of interest of their own right. For two sets $A,B$, let $A\hookrightarrow B$ denote the set of all injective functions from $A$ to $B$. Let $A\sqcup B$ denote the disjoint union of the sets $A$ and $B$. 
\subsubsection{Bipartite Cayley Graphs}	
			Given a bipartite graph $G = (U\sqcup V,E)$, we say $U$ is its \textit{left part}, denoted $L(G)$, and $V$ its \textit{right part}, denoted $R(G)$.  For $v\in V(G)$, let $N_G(v)$ denote the neighborhood of $v$ in $G$. 
			\begin{defn}\label{defn bipartite Cayley graph}
				For a boolean function $f:\F_2^n\to \spm$, the \textit{bipartite Cayley graph} of $f$, denoted $BC(f)$, is the bipartite graph on vertex set $\F_2^n \sqcup \F_2^n$ where $u\sim v \iff f(u + v) = -1$.
			\end{defn}

			We observe that the number of edges in $BC(f)$ is exactly $2^{2n}\prob_{x\in\F_2^n}[f(x) = -1]$. The bipartite Cayley graph is associated to the Cayley graph with vertex set $\F_2^n$ and edge set generated by $\{x\in \F_2^n: f(x) = -1\}$. See Figure \ref{figure double cover example} for a more explicit example of $BC(f)$. 
			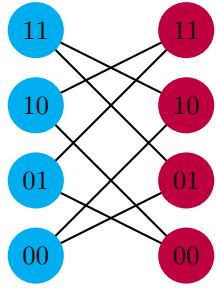
\begin{figure}[htp]
				\centering
				\begin{tikzpicture}
				\coordinate (00L) at (-1,-1);
				\coordinate (01L) at (-1,0);
				\coordinate (10L) at (-1,1);
				\coordinate (11L) at (-1,2);
				
				\coordinate (00R) at (1,-1);
				\coordinate (01R) at (1,0);
				\coordinate (10R) at (1,1);
				\coordinate (11R) at (1,2);
				
				\draw[thick] (00L) -- (10R);
				\draw[thick] (00L) -- (01R);
				\draw[thick] (10L) -- (00R);
				\draw[thick] (10L) -- (11R);
				\draw[thick] (01L) -- (00R);
				\draw[thick] (01L) -- (11R);
				\draw[thick] (11L) -- (01R);
				\draw[thick] (11L) -- (10R);
				
				\node[circle,fill=cyan] at (00L) {$00$};
				\node[circle,fill=cyan] at (01L) {$01$};
				\node[circle,fill=cyan] at (10L) {$10$};
				\node[circle,fill=cyan] at (11L) {$11$};
				\node[circle,fill=purple] at (00R) {$00$};
				\node[circle,fill=purple] at (10R) {$10$};
				\node[circle,fill=purple] at (01R) {$01$};
				\node[circle,fill=purple] at (11R) {$11$};
				\end{tikzpicture} 
				\caption{The bipartite Cayley graph, $BC(g)$, of the function $g(z) = (-1)^{z_1 + z_2}$ for $z_1,z_2\in \F_2$. $g$ encodes the XOR function on $2$ bits.}
				\label{figure double cover example}
			\end{figure}
\subsubsection{Embeddings and homomorphisms of bipartite subgraphs}\label{sssection bipartite graph homs}	

			We consider graph homomorphisms between bipartite graphs which preserve the bipartition. 
			\begin{defn}\label{defn bipartite graph homomorphism}
				A \textit{Bipartite Graph Homomorphism} from $H = (U\sqcup V,F)$ to $G = (A\sqcup B,E)$ is a pair of injective maps $(\psi,\phi)$ where $\psi:U\to A$, $\phi:V\to B$, and
				\[
				(u,v)\in F \implies (\psi(u),\phi(v)) \in E.
				\]
			\end{defn}
			Note that we explicitly define our homomorphisms to be injective. The set of all bipartite graph homomorphisms of a fixed bipartite graph $H$ into another bipartite graph $G$ will be denoted by $BHOM(H,G)$. Let
			\[
				\bhom(H,G) := \exv_{\psi:U \hookrightarrow A} \exv_{\phi:V \hookrightarrow B} \prod_{(u,v)\in F} [(\psi(u),\phi(v)) \in E]
			\] denote the normalized size of $BHOM(H,G)$.
			We also consider bipartite graph homomorphisms in which the left part is fixed by a particular injection $\psi$. The set of all bipartite graph homomorphisms with a fixed left map $\psi$ will be denoted by $BHOM_\psi(H,G)$. Let
				\[
			\bhom_\psi(H,G) := \exv_{\phi:V \hookrightarrow B}  \prod_{(u,v)\in F} [(\phi(u),\psi(v)) \in E]
			\] denote the normalized size of $BHOM_\psi(H,G)$.
			 We say that an injection $\psi$ has \textit{diameter} at most $k$ if the image of $\psi$ is a set of diameter at most $k$. 
	
\subsubsection{Colored multigraphs}
	The following definition is inspired by the work of Aharoni et al on rainbow extremal problems \cite{aharoni2018rainbow}.
	\begin{defn}\label{defn colored multigraph}
				An \textit{edge-colored multigraph} $M$ with color set $K$ is a multigraph with an edge-coloring using colors in $K$ such that multiple edges between the vertices $u$ and $v$ cannot have the same color.
			\end{defn}
			We will typically think of the edges of an edge-colored multigraph as a subset of $V\times V \times K$.

			\begin{defn}\label{defn rainbow Hamming graph}
				For fixed $f:\F_2^n\to\spm$, $k \geq 1$, and $K\subseteq \F_2^n$, the \textit{rainbow Hamming graph} $RHG(k,f)$ is the colored multigraph on the vertex set $B_k(n,0)$ with color set $K = \F_2^n$ and edge set defined as
				\[
				\{(u,v,x)\in V\times V\times K: f(u + x) = f(v + x)\}.
				\]
			\end{defn}
			 For an explicit example of a rainbow Hamming graph, see Figure \ref{figure rainbow Hamming graph example}.
			\begin{figure}
				\centering
				\begin{tikzpicture}
				\node[circle,fill=cyan] (00) at (0,0) {$00$};
				\node[circle,fill=cyan] (01) at (2,2) {$01$};
				\node[circle,fill=cyan] (10) at (-2,2) {$10$};

				\draw[red,thick] (10) edge[bend left]  node[midway, above,text=black] {$00$} (01) ;
				\draw[blue,thick] (10) edge[bend right]  node[midway, below,text=black] {$01$} (00) ;
				\draw[green,thick] (01) edge node[midway, above,text=black] {$10$} (00) ;
				\draw[yellow,thick] (10) edge node[midway, above,text=black] {$11$} (01) ;
				\draw[yellow,thick] (00) edge[bend right] node[midway, above,text=black] {$11$} (01) ;
				\draw[yellow,thick] (10) edge node[midway, above,text=black] {$11$} (00) ;
				
				\end{tikzpicture} 
				\caption{The rainbow Hamming graph $RHG(1,h)$ of the function $h(z) = (-1)^{1 - z_1z_2}$ where $z_1,z_2\in \F_2$. Each edge is labeled by the string in $\F_2^2$ which defines its color. Note that $h$ encodes the OR function.}\label{figure rainbow Hamming graph example}
			\end{figure}
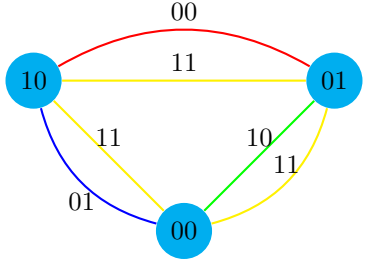

\subsubsection{Rainbow embeddings}			
			
			We consider graph homomorphisms into a colored multigraph which agree with the coloring.
			\begin{defn}\label{defn rainbow embedding}
				Let $M$ be a colored multigraph with color set $K$ and let $G$ be a fixed (simple) graph. A \textit{rainbow embedding} of $G$ into $M$ is a injective coloring $\chi:E(G) \to K$ and an injective map $\phi:V(G) \to V(M)$ such that 
				\[
				(u,v)\in E(G) \implies (\phi(u),\phi(v),\chi((u,v))\in E(M).
				\] 
			\end{defn}
			These embeddings are also considered in the work of Alon and Marshall \cite{alon1998hom}.

			For a fixed graph $G$, a fixed colored multigraph $M$ with color set $K$, let 
			\[
			\emb(G,M) :=  \exv_{\phi:V(G)\hookrightarrow V(M)} \exv_{\chi:E(G)\hookrightarrow K} \prod_{(u,v)\in E(G)} [(\phi(u),\phi(v),\chi((u,v))) \in E(M)]
			\] be the normalized count of rainbow embeddings of $G$ into $M$. If we additionally fix the injection $\phi:E(G)\hookrightarrow K$, let \[
			\emb_{\phi}(G,M) :=  \exv_{\chi:E(G)\hookrightarrow K} \prod_{(u,v)\in E(G)} [(\phi(u),\phi(v),\chi((u,v))) \in E(M)]
			\] be the normalized count of rainbow embeddings with a fixed map $\phi$. 
	
\subsection{Bent Functions}
	We consider a specific class of boolean functions originally defined by Rothaus \cite{Rothaus1976}.
	\begin{defn}\label{defn bent function}\cite{Rothaus1976}
		For $n$ even, a boolean function $f:\F_2^n\to \spm $ is \textit{bent} if
		\[
		\abs{\fourier{f}{\gamma}} = 2^{-n/2}
		\] for every $\gamma\in\F_2^n$. 
	\end{defn} 
	Note that bent functions only exist for $n$ even. 
	
	We will need the following lemma:
	\begin{prop}\cite{Rothaus1976}\label{lem convolution of bent functions}
		Let $g:\F_2^n\to \spm$ be bent. Then 
		\[
		(g*g)(x) = [x = 0].
		\]
	\end{prop}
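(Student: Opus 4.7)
The plan is to prove this via Fourier inversion, using the two facts the paper has already set up: that convolution multiplies Fourier coefficients ($\widehat{g*g}(\gamma) = \fourier{g}{\gamma}\fourier{g}{\gamma}$), and the orthonormality of the characters $\chi_\gamma$.

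First I would observe that since $g$ is real-valued and each character $\chi_\gamma(x) = (-1)^{\gamma\cdot x}$ is real, the Fourier coefficient $\fourier{g}{\gamma} = \iprod{g,\chi_\gamma}$ is itself real. Thus the bent condition $\abs{\fourier{g}{\gamma}} = 2^{-n/2}$ can be squared directly to give $\fourier{g}{\gamma}^2 = 2^{-n}$ for every $\gamma\in\F_2^n$. Combined with the convolution identity noted in the preliminaries, this yields
\[
\widehat{g*g}(\gamma) = \fourier{g}{\gamma}^2 = 2^{-n}
\]
uniformly in $\gamma$.

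Next I would apply Fourier inversion. Since the characters $\{\chi_\gamma\}_{\gamma\in\F_2^n}$ form an orthonormal basis under the given inner product, every function $h:\F_2^n\to\C$ satisfies $h(x) = \sum_{\gamma\in\F_2^n} \fourier{h}{\gamma}\chi_\gamma(x)$. Applying this to $h = g*g$ gives
\[
(g*g)(x) = \sum_{\gamma\in\F_2^n} 2^{-n} (-1)^{\gamma\cdot x} = 2^{-n}\sum_{\gamma\in\F_2^n} (-1)^{\gamma\cdot x}.
\]
The remaining character sum is a standard computation: if $x = 0$ then every summand equals $1$ and the total is $2^n$; if $x \neq 0$ then the map $\gamma \mapsto \gamma\cdot x$ is a surjection onto $\F_2$ with fibers of equal size, so the sum cancels to $0$. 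Either way, $(g*g)(x) = [x = 0]$, as claimed.

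The proof has no real obstacle — the only thing to be careful about is confirming that $\fourier{g}{\gamma}$ is real so that squaring the absolute value in the bent definition is legitimate, and recording the character-sum orthogonality explicitly. Both steps are immediate from the definitions given in section \ref{sec defns}.
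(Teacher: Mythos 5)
Your proof is correct, and it is the standard Fourier-inversion argument; the paper itself states this proposition with a citation to Rothaus rather than giving a proof, so there is nothing to compare against beyond noting that your argument is the canonical one. One small remark worth keeping in mind: the paper's convolution identity $\fourier{g*h}{\gamma} = \fourier{g}{\gamma}\fourier{h}{\gamma}$ really carries a conjugate on the second factor in general, but since you correctly observe that $\fourier{g}{\gamma}$ is real for a real-valued function against real characters, this does not affect the step $\widehat{g*g}(\gamma) = \fourier{g}{\gamma}^2 = 2^{-n}$.
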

	
	\begin{ex}\label{ex inner product fn}
		The \textbf{Inner Product} function $IP:\F_2^{2m}\to \spm$ is defined by
		\[
		IP(z) := (-1)^{\sum_{i = 1}^{m} z_iz_{m + i}} = (-1)^{z_1\cdot z_2}
		\] where $z_1$ is the first $m$ bits of $z$ and $z_2$ is the last $m$ bits of $z$.

		For the sake of completeness, we show that $IP$ is in fact a bent function. To calculate its Fourier coefficients, fix $\gamma\in \F_2^{2m}$, and let $\gamma_1,\gamma_2\in \F_2^m$ denote the first $m$ bits of $\gamma$ and the last $m$ bits respectively. For $x\in \F_2^{2m}$, define $x_1,x_2$ similarly. Then,
		\begin{align}
		\fourier{IP}{\gamma} &= \exv_{x\in \F_2^{2m}} IP(x)\chi_{\gamma}(x)\nonumber\\
		&= \exv_{x_1\in \F_2^m} \exv_{x_2\in \F_2^m} (-1)^{x_1\cdot x_2 + \gamma_1\cdot x_1 + \gamma_2 \cdot x_2}\nonumber\\
		&= \exv_{x_1\in \F_2^m} (-1)^{\gamma_1 \cdot x_1}\exv_{x_2\in \F_2^m} (-1)^{(x_1 + \gamma_2)\cdot x_2}\nonumber\\
		&= \exv_{x_1\in \F_2^m} (-1)^{\gamma_1 \cdot x_1}[x_1 = \gamma_2]\label{eqn IP proof orthogonality}\\
		&= (-1)^{\gamma_1\cdot \gamma_2}2^{-m}\nonumber
		\end{align} where we use the fact that Fourier characters are orthogonal in line (\ref{eqn IP proof orthogonality}). Thus $IP$ is a bent function. 
	\end{ex}

	\section{Quasi-random Properties and the Main Results}\label{sec main result}
		We describe a number of quasi-random properties of boolean functions. The properties involve two parameters, denoted by $d$ and $\epsilon$, where $\epsilon$ indicates the error bound and $d$ is often related to the rank or dimension of patterns or objects in the property. The proof of the equivalence of these properties is given in section \secref{sec main proof}.

			If $f:\F_2^n\to \spm$ is chosen uniformly at random, we expect that the $\gamma$-influence (see Definition \ref{defn directional influence}) should be close to $\half$. Our first quasirandom property formalizes that notion: 
			\begin{property}\label{property directional influences}
				A boolean function $f:\F_2^n\to \spm$ with $\abs{\fourier{f}{0}} <\half$ has the \textbf{Balanced Influences Property} $INF(k,\epsilon)$ if the $\gamma$-Influence of $f$ is close to $\half$ for every nonzero $\gamma$ in the Hamming ball of radius $k$ centered at $0$, i.e. 
				\[
				\abs{\Inf_{\gamma}[f] - \half} < \epsilon
				\] for every $\gamma \neq \vec{0}$ in the Hamming ball of radius $k$ at $\vec{0}$.
			\end{property} We remark that $INF(k,0)$ is also known as the $k$th-Order Avalanche Criterion as studied in cryptography \cite{forre1990spectral}.

			For $f:\F_2^n\to\spm$ drawn uniformly from all boolean functions, the expected spectral sample (see Definition \ref{defn spectral sample}) is $\frac{1}{2^n}$ on each vector in $\F_2^n$. In particular, the total weight of the uniform distribution on a subcube of dimension $n - k$ is exactly $2^{k - n}$.  Our next quasi-random property states that $\mathcal{S}_f$ is not far from the uniform distribution on on subcubes.
			
			\begin{property}\label{property spectral discrepancy}
				A boolean function $f:\F_2^n\to \spm$ with $\abs{\fourier{f}{0}} <\half$ has the \textbf{Spectral Discrepancy Property} $SD(k,\epsilon)$ if the spectral sample of $f$ has total weight close to $2^{l - n}$ on every subcube of dimension $l$ where $l \geq n - k$ i.e.
				\[
				\abs{\prob_{z\sim \mathcal{S}_f}[z\in H] - 2^{\dim(C(S,z)) - n}} < \epsilon
				\] for every subcube $H$ of dimension at least $n - k$. 
			\end{property}
			One can think of this property as a form of ``discrepancy'' for the spectral sample.
			
			Next, we have a counting property on subcubes via the notion of restricted functions (see  Definition \ref{defn restrictions}).
			 As $f|_{S,z}$ is a map $\F^d \to \spm$ for $\abs{S} = d$, we can consider its Fourier coefficients. The next quasi-random property states that these Fourier coefficients are quite small on average.
			
			\begin{property}\label{property restriction fourier}
				A boolean function $f:\F_2^n\to \spm$ with $\abs{\fourier{f}{0}} <\half$ has the \textbf{Restriction Fourier Property} $RF(k,\epsilon)$ if the average restriction of $f$ is nearly a bent function on any subcube of dimension at most $k$, i.e. 
				\[
				\abs{\exv_{z\in \F_2^{\overline{S}}}\left[ \fourier{f|_{S,z}}{\gamma}^2\right] - 2^{-\dim(C(S,z))}} < \epsilon
				\] for every subcube $C(S,z)$ of dimension at most $k$ and every $\gamma\in \F_2^S$.
			\end{property}		
		
			The next property states that we can control certain patterns in the restrictions of $f$. 
			\begin{property}\label{property restriction convolutions}
				A boolean function $f:\F_2^n\to \spm$ with $\abs{\fourier{f}{0}} <\half$ has the \textbf{Restriction Convolution Property} $RC(k,\epsilon)$ if the average self-convolution of restrictions of $f$ to subcubes of dimension at most $k$ is close to the indicator function of the $0$ vector, i.e. 
				\[
					\abs{\exv_{z\in \F_2^{\overline{S}}} (f|_{S,z}*f|_{S,z})(x) - [x = \vec{0}]} < \epsilon
				\] for every set $S\subseteq [n]$ of size at most $k$. 
			\end{property}
		
			Convolutions are closely related to influences, so we have an additional influences property pertaining to an average restricted function:
		
			\begin{property}\label{property restriction influences}
				A boolean function $f:\F_2^n\to \spm$ with $\abs{\fourier{f}{0}} <\half$ has the \textbf{Restriction Influences Property} $RI(k,\epsilon)$ if the $\gamma$-Influences of the average restriction to subcubes of dimension at most $k$ are close to $\half$. 
				\[
					\abs{\exv_{z\in \F_2^{\overline{S}}} \Inf_{\gamma}[f|_{S,z}] - \half} < \epsilon
				\] for every set $S\subseteq [n]$ of size at most $k$ and every nonzero $\gamma\in \F_2^S$. 
			\end{property}

			The next combinatorial property states that many pairs of vertices on one side of the bipartite Cayley graph $BC(f)$ of $f$ (see Definition \ref{defn bipartite Cayley graph}) have the same number of shared neighbors.  
			\begin{property}\label{property local strong regularity}
				 A boolean function $f:\F_2^n\to \spm$ with $\abs{\fourier{f}{0}} <\half$ has the \textbf{Local Strong Regularity Property} $LSR(k,\epsilon)$ if the any pair of vertices within distance $k$ of each other in the left part of the bipartite Cayley graph of $f$ have approximately the same number of common neighbors, i.e. 
				\[
					\abs{\frac{N_{BC(f)}(u)\cap N_{BC(f)}(v)}{2^n} - \left(\frac{1}{4} - \frac{\fourier{f}{0}}{2}\right)} < \epsilon 
				\] for every pair of vertices $x,y$ on the left side of $BC(f)$ such that $\abs{x - y} \leq k$. 
			\end{property}
			Note that every bipartite Cayley graph has a symmetry which reverses its left and right parts, so we only need to consider the left part in the quasirandom property.  We remark that the assumption that $\abs{\fourier{f}{0}} <\half$ is required for Property \ref{property local strong regularity} and the other combinatorial properties.

		 	Our next property concerns the bipartite graph homomorphisms of a fixed bipartite graph with a fixed left map.  
			\begin{property}\label{property local embeddings}
				Define $p = \frac{1}{4} - \frac{\fourier{f}{0}}{2}$ and $q = \half - \frac{\fourier{f}{0}}{2}$. A boolean function $f:\F_2^n\to \spm$ with $\abs{\fourier{f}{0}} <\half$ has the \textbf{Degree $2$ Homomorphisms Property} $DTH(k,\epsilon)$ if every bipartite graph $G$ appears as a subgraph of $BC(f)$ as often as in a random bipartite graph for any choice of left map for $G$ with diameter at most $k$, as long as $G$ has maximum degree $2$ in its right part. More formally, the Degree-$2$ Homomorphisms Property holds if
				\[
					\abs{\bhom_\psi(G,BC(f)) - p^{r_2}q^{r_1}} < \epsilon p^{r_2}q^{r_1}
				\]  for every bipartite graph $G = (U\sqcup V,F)$ such that $V$ has maximum degree $2$ in $G$ and $|V| \leq 2^{n/2}$, and every injection $\psi:U\to \F_2^n$ of diameter at most $k$ where $r_2,r_1$ are the number of vertices in the right part of $G$ with degree $2$ and $1$ respectively. 
			\end{property}

			Our next combinatorial property considers embeddings into a the rainbow Hamming graph as defined in definitions \ref{defn colored multigraph} and \ref{defn rainbow embedding}. 
			
			\begin{property}\label{property rainbow mebeddings}
				A boolean function $f:\F_2^n\to\spm$ with $\abs{\fourier{f}{0}} <\half$ has the \textbf{Rainbow Embeddings Property} $RAIN(d,\epsilon)$ if for every fixed simple graph $G$ and every choice of injection $\phi$ from $G$ to the rainbow Hamming graph of $f$, there are close to $2^{-\abs{E(G)}}$ colorings of $G$ which become rainbow embeddings of $G$ under $\phi$. Namely, the Rainbow Embeddings Property holds if
				\[
				\abs{\emb_{\phi}(G,RHG(d,f)) - 2^{-\abs{E(G)}}} < \epsilon
				\] for every fixed graph $G$, and every $\phi:V(G)\hookrightarrow V(RHG(d,f))$ of diameter at most $d$.
			\end{property}
			\begin{rmk}
				In Figure \ref{figure rainbow Hamming graph example}, consider the graph $K_{1,2}$ with the injection which sends the left vertex to $00$ and the two right vertices to $01,10$. We note that there are exactly $4$ possible edge-colorings of $K_{1,2}$ which become rainbow embeddings under this injection. As there are $12$ possible colorings, it follows that $OR$ does not satisfy the Rainbow Embeddings Property $RAIN(d,\epsilon)$ for any $\epsilon < 1/6$. 
			\end{rmk}

			For properties $P(d,\epsilon)$ and $Q(d,\epsilon)$, we say $P$ \textit{implies} $Q$ with error bound $\delta$, denoted $P(d,\epsilon)\overset{\delta}{\implies} Q(d,\epsilon)$, if for every $d \geq 1$, every $\epsilon > 0$, and every boolean function $f:\F_2^n\to \spm$, there is a $\delta > 0$ such that 
			
			\[
				P(d,\delta) \implies Q(d,\epsilon)\hspace{1cm}
			\] where $\delta$ depends on $d$ and $\epsilon$ but not on the size of the domain of $f$. If 
			\[
				P(d,\epsilon) \overset{\delta_1}{\implies} Q(d,\epsilon)~~\text{and}~~Q(d,\epsilon) \overset{\delta_2}{\implies} P(d,\epsilon)
			\] for some error bounds $\delta_1$ and $\delta_2$, we say that $P$ and $Q$ are \textit{equivalent}. Our main result is as follows: 
			\begin{thm}\label{thm main result}
				For any fixed $d$ and $\epsilon$, the properties $INF(d,\epsilon)$(\ref{property directional influences}), $SD(d,\epsilon)$(\ref{property spectral discrepancy}), $RF(d,\epsilon)$(\ref{property restriction fourier}), $RC(d,\epsilon)$(\ref{property restriction convolutions}), $RI(d,\epsilon)$(\ref{property restriction influences}),  $LSR(d,\epsilon)$(\ref{property local strong regularity}), $DTH(d,\epsilon)$(\ref{property local embeddings}), and $RAIN(d,\epsilon)$(\ref{property rainbow mebeddings}) are equivalent.
			\end{thm}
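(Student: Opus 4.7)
The plan is to establish all equivalences by reducing each property to control of $(f*f)(\gamma) = 1 - 2\Inf_\gamma[f]$ on the Hamming ball $\{\gamma : |\gamma| \leq k\}$, using a small number of direct Fourier identities together with Fourier inversion on subcubes. The key identities I would collect first are the averaging formulas
\[
\exv_{z \in \F_2^{\overline{S}}}(f|_{S,z}*f|_{S,z})(x) = (f*f)(x \underset{S}{\otimes} 0), \qquad \exv_{z \in \F_2^{\overline{S}}}\Inf_\gamma[f|_{S,z}] = \Inf_{\gamma \underset{S}{\otimes} 0}[f],
\]
\[
\exv_{z \in \F_2^{\overline{S}}}\fourier{f|_{S,z}}{\gamma}^2 \;=\; \sum_{\delta \in \F_2^n \,:\, \delta_S = \gamma}\fourier{f}{\delta}^2.
\]
The first two identify RC and RI with INF restricted to vectors supported on $S$, and the third identifies RF with SD, since its right-hand side is precisely the weight of $\mathcal{S}_f$ on the subcube of dimension $n - |S|$ obtained by fixing the $S$-coordinates to $\gamma$. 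To pass between INF and SD, I would invert the Fourier transform on $\F_2^S$: expanding $\mathbf{1}_{\delta_S = \eta} = 2^{-|S|}\sum_{\gamma \in \F_2^S}(-1)^{\gamma \cdot (\delta_S + \eta)}$ and swapping sums exhibits the spectral weight on a subcube as a sign transform of the values $(f*f)(\widetilde{\gamma})$ with $\widetilde{\gamma}$ ranging over vectors supported in $S$. Both directions incur at most a factor $2^k$ in the error, which is harmless with $k$ fixed.

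For the combinatorial properties I would start with the common-neighbor identity
\[
\frac{|N_{BC(f)}(u) \cap N_{BC(f)}(v)|}{2^n} = \frac{1}{4} - \frac{\fourier{f}{0}}{2} + \frac{(f*f)(u+v)}{4},
\]
obtained by writing $[f = -1] = (1 - f)/2$ and expanding, which immediately gives LSR $\iff$ INF. For DTH, I would expand $\bhom_\psi(G, BC(f))$ as an expectation over $\phi$ of a product over $v \in V$: isolated vertices contribute $1$, degree-$1$ vertices contribute $q$, and a degree-$2$ vertex with neighbors $u_1, u_2$ contributes exactly $p + (f*f)(\psi(u_1) + \psi(u_2))/4$ by the common-neighbor identity, which is close to $p$ under INF since $\psi$ has diameter at most $k$. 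Multiplying gives $\bhom_\psi(G, BC(f)) \approx p^{r_2}q^{r_1}$, and the converse follows by specializing $G$ to two edges meeting at a single right vertex, recovering LSR. For RAIN the analogous identity
\[
\prob_{x \in \F_2^n}[f(\phi(u)+x) = f(\phi(v)+x)] = \tfrac{1}{2}\bigl(1 + (f*f)(\phi(u)+\phi(v))\bigr)
\]
reduces a single-edge rainbow embedding count to $(f*f)$ on a vector of small weight; an independent product over edges yields $\emb_\phi(G, RHG(d,f)) \approx 2^{-|E(G)|}$ under INF, while the single-edge specialization $\phi(u) = 0,\ \phi(v) = \gamma$ recovers INF.

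The principal technical obstacle is handling the injectivity requirements on $\phi$ in DTH and on $\chi$ in RAIN, since the natural product factorization of the expectation treats these as uniformly random maps rather than uniformly random injections. I expect one must bound the probability of a collision by $O(|V|^2/2^n)$ and $O(|E(G)|^2/2^n)$ respectively and absorb the error, which is precisely why the size bound $|V| \leq 2^{n/2}$ appears explicitly in DTH and why the diameter bound $d$ controls $|V(G)|$ implicitly in RAIN. A secondary nuisance is that Fourier inversion on $\F_2^S$ propagates a factor $2^k$ into the error bound going from SD to INF, so one must choose $\delta \approx 2^{-k}\epsilon$ in that implication; with $k = d$ held fixed this poses no conceptual problem.
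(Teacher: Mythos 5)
Your proposal is correct in its essentials and reduces every property to control of $(f*f)(\gamma)$ on small Hamming balls, exactly as the paper does via Proposition~\ref{lem influence is autocorrellation is average value of derivative} (namely $1 - 2\Inf_\gamma[f] = (f*f)(\gamma)$). The analytic arc INF~$\leftrightarrow$~SD~$\leftrightarrow$~RF~$\leftrightarrow$~RC~$\leftrightarrow$~RI via the three averaging identities and Fourier inversion on subcubes, and the common-neighbor identity for LSR, match Theorems~\ref{lem influences to spectral sample}--\ref{thm influence to local strong regularity} closely. Where you genuinely diverge is the combinatorial endgame: the paper threads the cycle LSR~$\to$~DTH~$\to$~RAIN~$\to$~INF, using a telescoping expansion of $\bhom_\psi$ in Theorem~\ref{thm local strong regularity to degree two embeddings} and then lifting $\emb_\phi(G,RHG(d,f))$ to a signed sum over subgraphs of the subdivision $\Subdiv(G)$ via Lemma~\ref{lem subgraph expansion} in Theorem~\ref{thm degree two embeddings to rainbow embeddings}, so that the injectivity of $\chi$ is absorbed once and for all by the $|V|\leq 2^{n/2}$ hypothesis in DTH. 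You instead propose a direct INF~$\to$~RAIN implication via a per-edge independent product, which is conceptually simpler and avoids the subdivision machinery entirely; the tradeoff is that you must re-confront the injectivity of $\chi$ at the RAIN level, whereas the paper has already paid that cost inside DTH. Your per-vertex product for DTH is morally the degenerate case of the paper's telescoping (the telescoping is what makes the relative-error bound $\epsilon p^{r_2}q^{r_1}$ come out cleanly rather than an additive error growing with $|D_2|$), so be prepared to reproduce something like the telescoping or a multiplicative error analysis to get the relative-error form that DTH demands. Finally, your collision-error bound $O(|E(G)|^2/2^n)$ for RAIN implicitly relies on the fact that $\phi$ maps into a Hamming ball of radius $d$, so $|V(G)| = O(n^d)$ and $|E(G)| = O(n^{2d})$; this needs to be said explicitly since RAIN, unlike DTH, carries no size restriction in its statement, and you should verify that $\sup_n n^{4d}/2^n$ is actually controlled (this is a point the paper also elides). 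None of these are fatal gaps; they are the same order of technical care the paper spends in Theorem~\ref{thm local strong regularity to degree two embeddings}, just redistributed.
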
	
			
			If a boolean function $f$ satisfies any one of these properties for some $d$ and $\epsilon$, we say that $f$ is \textit{quasi-random} of rank $d$ with error bound $\epsilon$.

			We can summarize the proof of our main result in figure \ref{figure diagram of main proof}, where each arrow is labeled with the relevant theorem and error bound.
			\begin{figure}[h]
				\centering
				\begin{tikzpicture}
					\node (inf) at (0,2) {$INF(d,\epsilon)$};
					\node (sd) at (4,2) {$SD(d,\epsilon)$};
					\node (rf) at (4,0) {$RF(d,\epsilon)$};
					\node (rc) at (4,-2) {$RC(d,\epsilon)$};							
					\node (ri) at (2,-1) {$RI(d,\epsilon)$};					
					\node (lsr) at (-4,-2) {$LSR(d,\epsilon)$};
					\node (dte) at (-4,0) {$DTH(d,\epsilon)$};
					\node (rain) at (-4,2) {$RAIN(d,\epsilon)$};
					
					\draw[very thick,->] (inf) --  (sd) node[midway,above] {$\epsilon/2$} node[midway,below] {Thm (\ref{lem influences to spectral sample})};
					\draw[very thick,->] (sd) -- (rf) node[midway,left] {$\epsilon$} node[midway,right]  {Thm \ref{lem spectral sample to restrictions fourier}};
					\draw[very thick,->] (rf) -- (rc) node[midway,left] {$\epsilon/2^d$} node[midway,right] {Thm \ref{lem restriction Fourier to restriction convolution}};
					\draw[very thick,->] (rc) -- (ri) node[midway,right] {$\epsilon$} node[midway,left] { Thm \ref{thm restriction convolution to restriction influences}};
					\draw[very thick,->] (ri) -- (inf) node[midway,left] {Thm (\ref{thm restriction influences to influences}} node[midway,right] {$\epsilon$};
					\draw[very thick,->] (rc) -- (lsr) node[midway,above] {$4\epsilon$} node[midway,below] {Thm \ref{thm restriction convolution to local strong regularity}};
					\draw[very thick,->] (inf) -- (lsr) node[midway,left] {$4\epsilon$} node[midway,right] {Thm \ref{thm influence to local strong regularity}};
					\draw[very thick,->] (lsr) --  (dte) node[midway,left] {$\epsilon p/2e\abs{D_2}$} node[midway,right] { Thm \ref{thm local strong regularity to degree two embeddings}};
					\draw[very thick,->] (dte) -- node[midway,left] {$\epsilon\left(\frac{5}{2} + \fourier{f}{0}\right)^{-\abs{E(G)}}$} node[midway,right] {Thm (\ref{thm degree two embeddings to rainbow embeddings}} (rain) ;
					\draw[very thick,->] (rain) -- node[midway,below] {Thm \ref{thm rainbow embeddings to influences}} node[midway,above] {$\epsilon/4$} (inf) ;
				\end{tikzpicture}
				\caption{The implications in the main theorem (\ref{thm main result}). Each edge gives the loss in $\epsilon$ and the reference to the theorem in which the implication is shown.}
				\label{figure diagram of main proof}
			\end{figure}
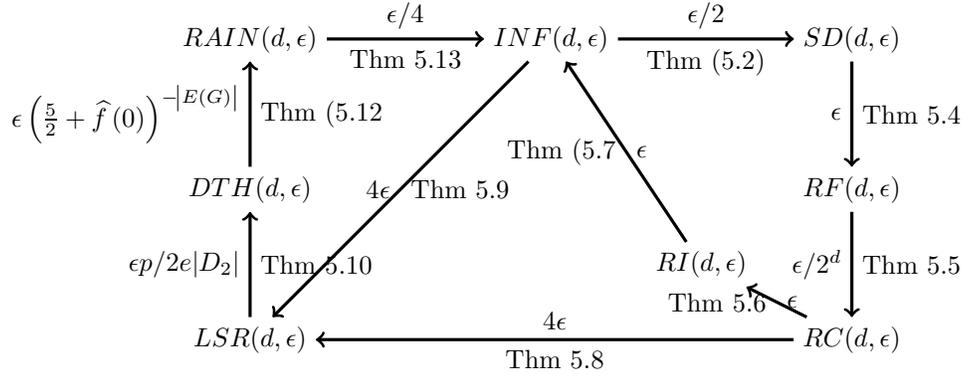
		
		One can easily observe that $P(d + 1,\epsilon) \implies P(d,\epsilon)$ for each property $P$ and every $d$ and $\epsilon$. Our second main result shows that these inclusions are strict, i.e. that there are functions which are quasi-random of rank $d$ but not quasi-random of rank $d + 1$.
		\begin{thm}\label{thm tower theorem}
			For each $d \geq 1$, there exists an explicit function $f_d:\F_2^n\to\spm $ with $\abs{\fourier{f}{0}} <\half$ such that 
			\begin{itemize}
				\item $f_d$ satisfies the Balanced Influences Property of rank $d$ with error $0$.
				\item $f_d$ does not satisfy the Balanced Influences Property of rank $d + 1$ for any bound  $\epsilon < \half$. 
			\end{itemize}
		\end{thm}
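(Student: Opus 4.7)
The plan is to construct $f_d$ as a composition $f_d = g \circ L$, where $g : \F_2^m \to \spm$ is a bent function (specifically the Inner Product function $IP$ of Example \ref{ex inner product fn}, which forces $m$ even) and $L : \F_2^n \to \F_2^m$ is a linear surjection whose kernel is carefully chosen. The design constraint on $\ker L$ is that it contain no nonzero vector of Hamming weight at most $d$, yet contain at least one vector of Hamming weight exactly $d+1$. The simplest realization is the one-dimensional kernel $\ker L = \{0,\gamma^*\}$ with $|\gamma^*|=d+1$, obtained as the quotient map $\F_2^n \to \F_2^n/\langle \gamma^* \rangle$ for any $n$ such that $m = n-1$ is even and at least $4$.

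First I would rewrite the $\gamma$-Influence as an autocorrelation: since $f_d$ is $\spm$-valued,
\[
\Inf_\gamma[f_d] \;=\; \half - \half\, \exv_{x \in \F_2^n} f_d(x)\, f_d(x+\gamma).
\]
Linearity and surjectivity of $L$ make $L(x)$ uniform on $\F_2^m$ with $L(x+\gamma) = L(x) + L(\gamma)$, so
\[
\exv_{x} f_d(x)\, f_d(x+\gamma) \;=\; \exv_{y \in \F_2^m} g(y)\, g(y + L(\gamma)) \;=\; (g*g)(L(\gamma)).
\]
Applying Proposition \ref{lem convolution of bent functions} to the bent function $g$ collapses the right-hand side to $[L(\gamma)=0] = [\gamma \in \ker L]$, so $\Inf_\gamma[f_d]$ equals $\half$ when $\gamma \notin \ker L$ and $0$ when $\gamma \in \ker L$.

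Both clauses of the theorem then follow directly from the choice of $\ker L$. For any nonzero $\gamma$ with $|\gamma| \leq d$ we have $\gamma \neq \gamma^*$ and therefore $\gamma \notin \ker L$, whence $\Inf_\gamma[f_d] = \half$ exactly, so $f_d$ satisfies $INF(d,0)$. On the other hand $\gamma^*$ itself has weight $d+1$ and lies in $\ker L$, so $|\Inf_{\gamma^*}[f_d] - \half| = \half$, ruling out $INF(d+1,\epsilon)$ for every $\epsilon < \half$. The side condition $|\fourier{f_d}{0}| < \half$ follows because surjectivity of $L$ gives $\fourier{f_d}{0} = \fourier{g}{0}$ and $|\fourier{g}{0}| = 2^{-m/2}$ for bent $g$, which is strictly less than $\half$ as soon as $m \geq 4$. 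I do not expect a serious obstacle: the only bookkeeping point is to ensure $m$ is a sufficiently large even integer so that a bent function exists, which is arranged by the choice of $n$, and no coding-theoretic existence results are needed here since a single weight-$(d+1)$ vector spans the required kernel.
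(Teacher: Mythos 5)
Your proof is correct, and it follows the paper's construction at a high level (a bent function composed with a linear surjection, with the influence computed via the autocorrelation identity $\Inf_\gamma[f] = \tfrac12 - \tfrac12 (f*f)(\gamma)$ and Proposition~\ref{lem convolution of bent functions}). The difference is in the choice of kernel: the paper takes $f(x) = g(Hx)$ where $H$ is the parity check matrix of an arbitrary $[n,k,d+1]$ binary linear code, and invokes Lemma~\ref{lem property of linear codes} to control the minimum weight of $\ker H$; you specialize to the one-dimensional code $\langle\gamma^*\rangle$ with $|\gamma^*|=d+1$, whose minimum-distance property is immediate, so no coding-theoretic input is needed at all. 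Your version is therefore a strict simplification for the purposes of this particular theorem, and your observation to that effect is apt; the paper's more general setup is presumably motivated by wanting a wide family of examples (and by the open questions it raises about general codes), not by logical necessity. One small point in your favor: you correctly require $m \geq 4$ so that $|\fourier{f_d}{0}| = 2^{-m/2} \leq \tfrac14 < \tfrac12$, whereas the paper's stated hypothesis $n \geq k+2$ only yields $|\fourier{f}{0}| \leq \tfrac12$, which does not satisfy the strict inequality demanded by the quasirandom properties; the paper implicitly needs $n - k \geq 4$, matching your constraint.
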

	
\section{Relating Quasirandom Boolean Functions to Extant theories}\label{sec previous theories}
		There are various other quasi-randomness theorems for boolean functions implicitly or explicitly considered in several related ranging from hypergraphs to analysis of boolean functions. Here we compare the properties defined in Section \secref{sec main result} to an incomplete list of these extant theories.

		We first consider Chung and Tetali's work on the relationship between $k$-uniform hypergraphs and boolean functions in \cite{chung1993communication}. Their ideas also appear implicitly in the works of Gowers \cite{Gowers2006} on hypergraph regularity lemmas and Szemeredi's Theorem, and in a survey paper by Casto-Silva \cite{castro-silva2021quasirandomness}. These works convert a boolean function to a $k$-uniform hypergraph via the following construction. Given a boolean function $f:\F_2^n\to \spm$, its \textit{Cayley Hypergraph} $H$ has the vertex set $\F_2^n$ and choosing hyper-edges $\{x_1,\dots,x_k\} \in E(H) \iff f(x_1 + \dots + x_k) = -1$. Via the Cayley hypergraph, these authors transfer the theory of quasi-randomness for uniform hypergraphs to boolean functions. 
		
		The central definition in these works is the following, which we state is a slightly non-traditional fashion: 
		\begin{defn}\label{defn Gowers norms}
			The \textit{$k$-th Gowers Uniformity Norm}, denoted $\gownorm{f}{k}$, is defined as
			\[
				\gownorm{f}{k} := \left(\exv_{x\in \F_2^n} \exv_{v_1,\dots,v_k\in \F_2^n} \prod_{\alpha_1,\dots,\alpha_k\in \{0,1\}} f(x + \alpha_1v_1 + \dots + \alpha_kv_k)\right)^{2^{-k}}
			\]
		\end{defn}
		We will typically use the following equivalent formula (see \cite{Hatami2019}):
		\[
			\gownorm{f}{k}  = \left(\exv_{x\in\F_2^n,M\in \F_2^{n\times k}}\prod_{v\in \F_2^k} f(x + Mv)\right)^{2^{-k}}.
		\]
		The Gowers norms are a direct translation of the properties in \cite{Chung1991a,chung1993communication} which count even and odd octahedra in $k$-Uniform hypergraphs.  
		For these theories, the key pseudorandom property is the following:
		\begin{property}\label{property smll gownorm}
			A boolean function $f:\F_2^n\to \spm$ is \textbf{$(\epsilon,d)$ $\F_2$-Regular} if
			\[
				\gownorm{f}{d + 1} < \epsilon
			\]
		\end{property}
		As shown in Castro-Silva's monograph \cite{castro-silva2021quasirandomness},  $(\epsilon,k + 1)$ $\F_2$-Regularity implies $(\epsilon,k)$ $\F_2$-Regularity, and the implication is strict. Hence, just as we have a hierarchy of quasirandom properties in our Theorem \ref{thm tower theorem}, we can view $(\epsilon,k)$ $\F_2$-Regularity as a similar hierarchy indexed by $k$. Furthermore, the $k + 1$-st Gowers norm controls correlation of $f$ with functions of $\F_2$-degree at most $k$ as shown in \cite{Hatami2019}. Hence, we can say that the hierarchy is indexed by $\F_2$-degree.
		
		We show the following Theorem whose proof is found in section \secref{sec comparisons}:
		\begin{thm}\label{thm relation to F2 regularity}
				Let $f:\F_2^n\to \spm$ have $(\epsilon,d)$-Balanced Influences. Then $f$ is $(\sqrt{2^{-d} + \epsilon},1)$ $\F_2$-Regular. 
				
				There is a function $f:\F_2^n\to \spm$ with $(0,n)$-Balanced Influences, yet $f$ is not $(\epsilon,2)$ $\F_2$-Regular for any $\epsilon < 1$.
				
				For any $d \geq 2$, there is a function $g:\F_2^n\to \spm$ which is $(\epsilon,d)$ $\F_2$-Regular and does not have the Balanced Influences Property of any rank $k \geq 1$ for any error bound $\epsilon < \half$.
		\end{thm}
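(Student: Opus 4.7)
The proof comprises three separate claims, treated in order.

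For the first implication, my plan is to combine the equivalence Theorem \ref{thm main result} with a maximum-Fourier-coefficient argument. First, $(\epsilon, d)$-Balanced Influences implies the Spectral Discrepancy Property $SD(d, \epsilon')$ for some $\epsilon' = O(\epsilon)$, obtained by chaining the implications shown in Figure \ref{figure diagram of main proof}. Next, for any $\delta \in \F_2^n$, embedding $\delta$ in a subcube $H$ of dimension $n-d$ that contains $\delta$ yields
\[
\fourier{f}{\delta}^2 \leq \sum_{\eta \in H}\fourier{f}{\eta}^2 \leq 2^{-d} + \epsilon',
\]
so $\max_\delta \fourier{f}{\delta}^2 \leq 2^{-d} + \epsilon'$. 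By Parseval's identity,
\[
\gownorm{f}{2}^4 = \sum_\delta \fourier{f}{\delta}^4 \leq \max_\delta \fourier{f}{\delta}^2 \cdot \sum_\delta \fourier{f}{\delta}^2 \leq 2^{-d} + \epsilon',
\]
yielding the desired bound on $\gownorm{f}{2}$ after taking the appropriate root.

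For the second claim, the witness is the Inner Product function $f = IP$ from Example \ref{ex inner product fn}. Since $IP$ is bent, Proposition \ref{lem convolution of bent functions} gives $(IP \ast IP)(v) = [v = 0]$, and hence $\Inf_v[IP] = 1/2$ for every nonzero $v$, establishing $(0,n)$-Balanced Influences. On the other hand, $IP(x,y) = (-1)^{\sum_i x_i y_i}$ corresponds to an $\F_2$-polynomial of degree exactly $2$, so its third iterated discrete derivative vanishes identically; by the standard identity expressing the $U^{k+1}$-norm of $(-1)^F$ as an average of signs of the $(k+1)$-st iterated derivative of $F$, we obtain $\gownorm{IP}{3} = 1$. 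Hence $IP$ is not $(\epsilon, 2)$ $\F_2$-Regular for any $\epsilon < 1$.

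For the third claim, given $d \geq 2$, my plan is to take $g: \F_2^n \to \spm$ of the form $g(x_1, \ldots, x_n) = h(x_2, \ldots, x_n)$ where $h: \F_2^{n-1} \to \spm$ is Gowers-uniform at level $d+1$. Since $g$ is independent of its first coordinate, $\Inf_{e_1}[g] = 0$, so $|\Inf_{e_1}[g] - 1/2| = 1/2$; hence $g$ violates $INF(k,\epsilon')$ at every rank $k \geq 1$ and every $\epsilon' < 1/2$. A short Fourier computation gives $\fourier{g}{\gamma} = [\gamma_1 = 0] \fourier{h}{\gamma|_{\{2,\ldots,n\}}}$, from which $\gownorm{g}{k} = \gownorm{h}{k}$ for every $k$, so $g$ inherits the $\F_2$-Regularity of $h$. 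The main obstacle is exhibiting $h$ with $\gownorm{h}{d+1} < \epsilon$: a uniformly random $h$ suffices by a standard second-moment argument once $n$ is large enough, and one may alternatively appeal to explicit Gowers-pseudorandom constructions such as high-degree Reed--Muller codewords or the quasi-random functions built in the paper's Theorem \ref{thm tower theorem}.
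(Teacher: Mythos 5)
Your proposal tracks the paper's proof almost exactly in all three parts. For part one the paper chains $INF\to SD$ (Theorem \ref{lem influences to spectral sample}) and then uses the subcube-containment bound $\fourier{f}{\gamma}^2 \le \sum_{\eta\in H}\fourier{f}{\eta}^2 \le 2^{-d}+\epsilon$ to conclude $\R$-regularity (Lemma \ref{lem balanced influences s to F2 degree 1 regular}), before invoking O'Donnell's Proposition 6.7 to pass to $(\cdot,1)$ $\F_2$-regularity; you unpack that last step explicitly by computing $\gownorm{f}{2}^4 = \sum_\gamma\fourier{f}{\gamma}^4$. For part two the paper's Lemma \ref{lem gownorm 3 greater than balanced influences} uses $IP$ with exactly your bentness and degree-$2$ observations. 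For part three the paper's Lemma \ref{lem balanced influences greater than gownorm} is precisely your ``pad by one dummy coordinate'' construction, applied to a Gowers-uniform base function (the paper uses the quadratic residue function; your random / Reed--Muller alternative is an acceptable substitute as long as the $|\fourier{h}{0}|<\tfrac12$ side condition is checked, which follows from $\gownorm{h}{2}<\tfrac12$).

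One quantitative point worth flagging on part one: your argument literally gives $\gownorm{f}{2} \le (2^{-d}+\epsilon')^{1/4}$, a \emph{fourth} root, whereas the theorem asserts the stronger bound $\sqrt{2^{-d}+\epsilon}$; your phrase ``taking the appropriate root'' papers over that gap. In fact the bound you derive appears to be the correct one attainable from a max-Fourier-coefficient estimate alone (the extremal spectral distribution spreads the weight evenly over roughly $(2^{-d}+\epsilon)^{-1}$ characters), and the paper's own proof seems to inherit the same discrepancy by treating O'Donnell's Proposition 6.7 as an $\epsilon$-preserving equivalence; so this is less a flaw of your argument than a constant you should state honestly as a fourth root rather than a square root.
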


		O'Donnell presents several pseudorandom properties in \cite{o'donnell_2014} which center on the Fourier expansion defined in Section \secref{sec defns}. The first pseudorandom property he mentions is the following:
		\begin{property}\label{property low degree fourier}
			A boolean function $f:\F_2^n\to \spm$ is \textbf{$(\epsilon,d)$ $\R$-Regular} if 
			\[
				\abs{\fourier{f}{\gamma}} < \epsilon
			\]for every $\gamma\in \F_2^n$ with $\abs{\gamma} \leq d$. 
		\end{property} 
		
		 One can easily verify that $(\epsilon,d + 1)$ $\R$-regularity implies $(\epsilon,d)$ $\R$-regularity, and a Fourier character of $\chi_{\gamma}$ where $\abs{\gamma} = d + 1$ shows that the implication is strict. Hence, just as we have a hierarchy of quasirandom properties in our Theorem \ref{thm tower theorem},  $(\epsilon,k)$ $\R$-Regularity can be viewed as having an increasing hierarchy of pseudorandom properties indexed by $k$.  Furthermore, $(\epsilon,n)$ $\R$-regularity is equivalent to $(\epsilon,1)$ $\F_2$-regularity as is shown in Proposition 6.7 of \cite{o'donnell_2014}. 
		
		We show the following Theorem whose proof can be found in section \secref{sec comparisons}:
		\begin{thm}\label{thm relation to R-regularity}
				Let $f:\F_2^n\to \spm$ have $(\epsilon,k)$ Balanced Influences. Then $f$ is $(\sqrt{2^{-k} + \epsilon},n)$ $\R$-regular. 
				
				Conversely, there is a function which is $(\epsilon,n)$ $\R$-regular which does not have $(\epsilon,k)$ Balanced Influences for any rank $k \geq 1$ or error bound $\epsilon < \half$. 
		\end{thm}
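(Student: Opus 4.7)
The plan for the forward implication is to bound a single Fourier coefficient $\widehat{f}(\gamma)^2$ from above by the spectral mass of a subcube containing $\gamma$ and then use Balanced Influences to estimate that subcube mass. The starting point is the identity
\[
\exv_x f(x)f(x+u) \;=\; 1 - 2\Inf_u[f],
\]
which after Fourier expansion reads
\[
1 - 2\Inf_u[f] \;=\; \sum_\delta \widehat{f}(\delta)^2 (-1)^{\delta \cdot u},
\]
exhibiting $u \mapsto 1 - 2\Inf_u[f]$ as the dual Fourier representation of the spectral sample $\mathcal{S}_f$. For an arbitrary $\gamma \in \F_2^n$ I would fix any $S \subseteq [n]$ with $|S| = n - k$ and set $z := \gamma|_{\overline S}$, so that $\gamma \in C(S,z)$. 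A character expansion of the indicator $\mathbf{1}_{C(S,z)}$ indexed by $u \in \F_2^{\overline S}$ then gives
\[
\sum_{\delta \in C(S,z)} \widehat{f}(\delta)^2 \;=\; 2^{-k} \sum_{u \in \F_2^{\overline S}} (1 - 2\Inf_u[f])(-1)^{u \cdot z}.
\]
The $u = 0$ term contributes $2^{-k}$, and every other $u$ in this sum has $1 \leq |u| \leq k$, so Balanced Influences bounds $|1 - 2\Inf_u[f]| < 2\epsilon$ and the remaining $2^k - 1$ terms contribute $O(\epsilon)$ in total. Monotonicity $\widehat{f}(\gamma)^2 \leq \sum_{\delta \in C(S,z)}\widehat{f}(\delta)^2$ yields $\widehat{f}(\gamma)^2 \leq 2^{-k} + O(\epsilon)$, and taking square roots gives the claimed R-regularity bound.

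For the converse direction I would exhibit a concrete function that is spectrally flat but has a dummy coordinate. Take $n$ odd so that $n - 1$ is even, let $g := IP : \F_2^{n - 1} \to \spm$ be the inner product function from Example~\ref{ex inner product fn} (which is bent), and define $f : \F_2^n \to \spm$ by $f(x_1,\dots,x_n) := g(x_1,\dots,x_{n-1})$. Averaging over the free coordinate $x_n$ gives $\widehat{f}(\gamma) = \widehat{g}(\gamma|_{[n-1]}) \cdot [\gamma_n = 0]$, so $|\widehat{f}(\gamma)| \leq 2^{-(n-1)/2}$ for every $\gamma \in \F_2^n$; choosing $n$ large enough that $2^{-(n-1)/2} < \epsilon$ makes $f$ both $(\epsilon, n)$ R-regular and compliant with the hypothesis $|\widehat{f}(0)| < \half$ from the Balanced Influences property. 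On the other hand, $f$ does not depend on $x_n$, so $\Inf_{e_n}[f] = 0$ and $|\Inf_{e_n}[f] - \half| = \half$. Since $e_n$ lies in the Hamming ball of radius $1$ about $0$, this forbids $INF(k, \delta)$ for every $k \geq 1$ and every $\delta < \half$, as required.

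The main obstacle is purely one of constants in the forward direction: the qualitative implication drops out at once from the Fourier identity, subcube monotonicity, and Balanced Influences, and only the loss factor relating the $\epsilon$ in the hypothesis to the $\epsilon$ inside the square root needs careful bookkeeping. One may either do this bookkeeping by hand as above or invoke the implication $INF \Rightarrow SD$ already contained in Theorem~\ref{thm main result} and then apply monotonicity of spectral mass to a subcube of dimension $n - k$ through $\gamma$. The converse is essentially a one-step construction: a bent function on a strict sub-hypercube keeps small Fourier coefficients when trivially extended, yet its triviality in the extra coordinate destroys the Balanced Influences hypothesis in the strongest possible way.
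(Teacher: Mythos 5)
Your proposal is correct and follows essentially the same route as the paper: the forward direction is the chain $INF \Rightarrow SD \Rightarrow \R$\mbox{-}regularity, which the paper realizes by citing Theorem~\ref{lem influences to spectral sample} and Lemma~\ref{lem balanced influences s to F2 degree 1 regular} while you inline the character expansion of the subcube indicator, and the converse is the same dummy-coordinate extension of the bent inner-product function (the paper packages it as Lemma~\ref{lem balanced influences greater than gownorm} applied to $IP$). One minor note: since $INF(k,\epsilon)$ gives $|1-2\Inf_u[f]|<2\epsilon$, your calculation actually yields $|\widehat{f}(\gamma)|\leq\sqrt{2^{-k}+2\epsilon}$ rather than $\sqrt{2^{-k}+\epsilon}$, but this factor-of-two slip is already present in the paper's own statement relative to Lemma~\ref{lem balanced influences s to F2 degree 1 regular}, so your bookkeeping is no worse than the source.
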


		Another collection of pseudorandom properties of Boolean functions appears implicitly in Chung and Graham's work on quasirandom subsets of $\Zp{N}$ \cite{Chung1992}. To apply their work to boolean functions, we can identify the set of binary strings with elements of $\Zp{2^n}$. Then a boolean function can be identified with the set of elements of $\Zp{2^n}$ on which it takes the value $-1$.   Their key pseudorandom property is the following:
		\begin{property}\label{property mod 2^n regularity}
			A boolean function $f:\Zp{2^n}\to \spm$ is \textbf{$\Zp{2^n}$-Regular} with error bound $\epsilon$ if $f$ has correlation at most $\epsilon$ with the all nonzero characters of $\Zp{2^n}$, i.e. for every nonzero $j\in \Zp{2^n}$,
			\[
				\abs{\exv_{z\in \Zp{2^n}} f(z)\exp(2\pi i j z/2^n)} < \epsilon.
			\]
		\end{property}
		
		As shown by Chung and Graham \cite{Chung1992}, $\Zp{2^n}$-Regularity controls the correlations of a function $f$ with a shifted copy of itself much like our Balanced Influences Property (see \ref{property directional influences}). However, the arithmetic operations considered in $\Zp{2^n}$-Regularity are carried out over $\Zp{2^n}$ rather than $\F_2^n$ as in the Balanced Influences Property.

		We prove the following Theorem whose proof is found in section \secref{sec comparisons}:
		\begin{thm}\label{thm relation to Z/2^nZ regularity}
			For any $\epsilon > 0$ there is a function which is $\Zp{2^n}$-Regular with error bound $\epsilon$ but is not $(\delta,n - k + 1)$ $\R$-Regular for any $\delta < 1$ where $k = -C_0\ln(1/\epsilon)$ for some absolute constant $C_0$. 
		\end{thm}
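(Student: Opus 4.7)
The plan is to take $f = \chi_\gamma$ for $\gamma\in\F_2^n$ of Hamming weight exactly $n-k+1$, supported on the top bit positions under the binary-expansion identification $\F_2^n \cong \Zp{2^n}$ via $x = \sum_i x_i 2^{i-1}$. Concretely, $\gamma$ is the indicator of $\{k,k+1,\ldots,n\}$, so $\fourier{f}{\gamma} = 1$ by definition, and consequently for every $\delta<1$ the function $f$ fails to be $(\delta,n-k+1)$ $\R$-Regular. All of the work is in showing that this $\chi_\gamma$, viewed as a function on $\Zp{2^n}$, is $\Zp{2^n}$-Regular with error $\epsilon$, for the prescribed $k = C_0\ln(1/\epsilon)$ and $n$ chosen sufficiently large in terms of $\epsilon$.

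To estimate the $\Zp{2^n}$-Fourier coefficients of $\chi_\gamma$, I would decompose $x\in\Zp{2^n}$ as $x = y + 2^{k-1}h$ with $y\in\{0,\ldots,2^{k-1}-1\}$ and $h\in\Zp{2^{n-k+1}}$. The bits $x_k,\ldots,x_n$ of $x$ are precisely the bits of $h$, so $\chi_\gamma(x) = (-1)^{\mathrm{pop}(h)} =: g(h)$, where $g:\Zp{2^{n-k+1}}\to\spm$ is the popcount-parity function. Splitting $\exv_x\chi_\gamma(x)e^{-2\pi ijx/2^n}$ into a geometric sum over $y$ and a character sum over $h$ gives
\[
\left|\exv_{x\in\Zp{2^n}}\chi_\gamma(x)e^{-2\pi ijx/2^n}\right| = \frac{1}{2^{k-1}}\cdot\frac{|\sin(\pi j/2^{n-k+1})|}{|\sin(\pi j/2^n)|}\cdot|\widehat{g}(j\bmod 2^{n-k+1})|,
\]
and the iterated double-angle inequality $|\sin(2^{k-1}\theta)|\leq 2^{k-1}|\sin\theta|$ applied with $\theta = \pi j/2^n$ collapses the first two factors to at most $1$. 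Factoring $g(h) = \prod_{t=1}^{L}(-1)^{h_t}$ with $L := n-k+1$ and using independence of the bits of $h$ yields the product formula $|\widehat{g}(m)| = \prod_{t=1}^{L}|\sin(\pi m/2^t)|$, which vanishes for even $m$.

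The main obstacle is to bound $\prod_{t=2}^{L}|\sin(\pi m/2^t)|$ uniformly over odd $m$. My approach is to pair consecutive levels $t,t+1$ and apply the elementary single-variable inequality
\[
|\sin\theta\sin(2\theta)| = 2\sin^2\theta|\cos\theta| \leq \tfrac{4\sqrt{3}}{9}\qquad (\text{maximized at }|\cos\theta| = 1/\sqrt 3),
\]
which yields $|\widehat{g}(m)| \leq \rho^{L-1}$ with $\rho := (4\sqrt{3}/9)^{1/2} \in (0,1)$, and hence $\max_{j\neq 0}|\widehat{\chi_\gamma}(j)|\leq\rho^{n-k}$. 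Taking $C_0 := 2/\ln(1/\rho)$ and $k = \lceil C_0\ln(1/\epsilon)\rceil$, any $n$ with $n-k\geq\ln(1/\epsilon)/\ln(1/\rho)$ makes this quantity at most $\epsilon$, so $f$ is $\Zp{2^n}$-Regular with error $\epsilon$. The sine-product estimate is the only non-routine step: no Vi\`ete-style identity telescopes a product of sines, so the pairing bound above is the essential technical ingredient that converts the $L$-fold product into a geometric decay.
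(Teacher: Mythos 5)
Your proposal is correct in its essential structure and arrives at the same conclusion, but it takes a genuinely different route through the key estimate, and it is worth comparing the two.

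The paper supports $\gamma$ (of weight $n-k$) on the \emph{low} bit positions and splits $z = 2^{n-k}x + y$. Because $\gamma$ lives on the $y$-part, the $x$-sum becomes $\exv_{0\le x<2^k}\omega_k^{-bx}$ after factoring $c = 2^k a + b$, which is an \emph{exact} $\Zp{2^k}$-character sum and hence vanishes unless $b=0$. The surviving piece is $\exv_{0\le y<2^{n-k}}\chi_{\bone}(y^*)\omega_{n-k}^{-ay}$, a Fourier coefficient of the parity function over $\Zp{2^{n-k}}$, which the paper then bounds by invoking Chung--Graham's Lemma \ref{lem all ones vector is Zp pseudorandom}. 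You instead support $\gamma$ (of weight $n-k+1$) on the \emph{high} bit positions and split $x = y + 2^{k-1}h$; the $y$-sum is then only a partial geometric series $\exv_{0\le y<2^{k-1}}e^{-2\pi i j y/2^n}$, which you control with the iterated double-angle inequality rather than by exact orthogonality. You then derive the popcount-parity Fourier coefficient $\abs{\widehat g(m)} = \prod_{t=1}^{L}\abs{\sin(\pi m/2^t)}$ directly from bit-independence and bound it by pairing consecutive levels via $\abs{\sin\theta\sin2\theta}\le 4\sqrt3/9$, rather than citing the lemma. Your route is more self-contained (no appeal to Chung--Graham is needed) and in fact gives a slightly sharper per-level constant ($\rho\approx 0.877$ vs.\ $\sqrt{2+\sqrt2}/2\approx 0.924$), at the cost of the extra geometric-sum step; the paper's low-bit placement is cleaner because the character sum over the bits outside $\supp(\gamma)$ vanishes identically.

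Two small points. First, the pairing of levels $t=2,\dots,L$ yields $\lfloor(L-1)/2\rfloor$ pairs, so the bound you obtain is $\rho^{2\lfloor(L-1)/2\rfloor}\le\rho^{L-2}$, not $\rho^{L-1}$ as written; this off-by-one is harmless since it only shifts the threshold for $n$. Second, you silently corrected the sign error in the theorem statement (which writes $k=-C_0\ln(1/\epsilon)$, a negative quantity; both your proof and the paper's use $k=\lceil C_0\ln(1/\epsilon)\rceil$), which is the right reading. You also state explicitly that $n$ must be taken large in terms of $\epsilon$, which the paper leaves implicit; making that dependence visible is an improvement, since the error estimate decays in $n-k$, not in $k$ alone.
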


		O'Donnell \cite{o'donnell_2014} defines an additional pseudo-random property which uses a different generalization of influences as follows.
		\begin{defn}\label{defn small stable influences}
			For a coordinate $i$ and a parameter $\rho\in [0,1]$, the \textbf{$\rho$-stable influence} of a boolean function $f:\F_2^n\to \spm$ is
			\[
				\Inf_{i}^{\rho}[f] := \sum_{\substack{\gamma\in \F_2^n\\ \gamma_i = 1}} \rho^{\abs{\gamma} - 1}\fourier{f}{\gamma}^2.
			\]
		\end{defn}

		The key pseudorandom property is:
		\begin{property}\label{property small stable influences}
			A boolean function $f:\F_2^n\to \spm$ has \textbf{$(\epsilon,\rho)$ Small Stable Influences} if
			\[
				\Inf_{i}^{\rho}[f] < \epsilon
			\] for every $i\in [n]$.
		\end{property}
	
		As shown by O'Donnell\cite{o'donnell_2014}, $\rho$-Stable-Influences measure the expected change in the function if the input bits are changed via a particular noise model. Thus, $(\epsilon,\rho)$ Small Stable Influences implies a form of noise stability.

		We show the following Theorem whose proof is found in section \secref{sec comparisons}:
		\begin{thm}\label{thm relation to small stable influences}
			Let $f:\F_2^n\to \spm$ satisfy $SD(d,\epsilon e^{(d - 1)((\delta/2) - \ln(2))})$. Then, $f$ has $(\epsilon,\delta)$ small stable influences. 
			Furthermore, $\chi_{\bone}$ has $((1 - \delta)^n,\delta)$ small stable influences for any $\delta < 1$ but does not have $(\epsilon,k)$ Balanced Influences for any $k$ and any $\epsilon < \half$. 
		\end{thm}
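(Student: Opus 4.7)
The plan is to expand the $\delta$-stable influence via its Fourier definition as an expectation over the spectral sample,
\[
\Inf_i^\delta[f] = \exv_{\gamma \sim \mathcal{S}_f}\bigl[\mathbb{1}[\gamma_i = 1]\,\delta^{|\gamma|-1}\bigr],
\]
and then project onto a carefully chosen set of coordinates before applying the Spectral Discrepancy Property with its prescribed error $\epsilon' = \epsilon\,e^{(d-1)(\delta/2 - \ln 2)}$. The key observation is that $SD(d,\epsilon')$ controls the distribution of any projection of the spectral sample onto at most $d$ coordinates, which is exactly the handle we need on the quantity above.

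The first step is to fix any $d$-element set $T \subseteq [n]$ containing $i$ and apply the inequality $\delta^{|\gamma|-1} \leq \delta^{|\gamma_T|-1}$, which is valid whenever $\delta \in [0,1]$ and $\gamma_i = 1$ (so $|\gamma_T| \geq 1$). This yields
\[
\Inf_i^\delta[f] \leq \sum_{y \in \F_2^T:\, y_i = 1} \delta^{|y|-1}\,\prob_{\gamma \sim \mathcal{S}_f}[\gamma|_T = y].
\]
Each event $\{\gamma|_T = y\}$ is a subcube of $\F_2^n$ of dimension $n-d$, so $SD(d,\epsilon')$ bounds $\prob[\gamma|_T = y] \leq 2^{-d}+\epsilon'$. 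Summing the geometric series $\sum_{y:\, y_i=1}\delta^{|y|-1} = (1+\delta)^{d-1}$ produces
\[
\Inf_i^\delta[f] \leq \tfrac{1}{2}\Bigl(\tfrac{1+\delta}{2}\Bigr)^{d-1} + \epsilon'\,(1+\delta)^{d-1}.
\]
The calibrated choice of $\epsilon'$ is designed precisely so that, upon applying $1+\delta \leq e^\delta$, both terms collapse into a constant multiple of $\epsilon$. The main obstacle will be balancing these two contributions: the first demands $d$ large enough to drive $((1+\delta)/2)^{d-1}$ below $\epsilon$, while the second requires the exponential factor inside $\epsilon'$ to absorb the growing factor $(1+\delta)^{d-1}$, which is exactly what the exponent $(d-1)(\delta/2 - \ln 2)$ is engineered to encode.

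For the separation claim, I would compute directly for $f = \chi_{\bone}$. Since its Fourier expansion is supported at the single character $\gamma = \bone$, the Fourier formula for $\Inf_i^\delta$ collapses to a single term that vanishes with $n$ for any $\delta < 1$, giving the claimed bound $(1-\delta)^n$ under the appropriate noise-parameter convention. On the other hand, since $\chi_{\bone}(x + \gamma) = (-1)^{|\gamma|}\chi_{\bone}(x)$, the $\gamma$-influence satisfies
\[
\Inf_\gamma[\chi_{\bone}] = \mathbb{1}[\,|\gamma|\text{ is odd}\,],
\]
which takes only the values $0$ or $1$ and therefore deviates from $\tfrac{1}{2}$ by exactly $\tfrac{1}{2}$ at every nonzero $\gamma$. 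Hence $\chi_{\bone}$ fails the Balanced Influences Property $INF(k,\epsilon)$ for every $k \geq 1$ and every $\epsilon < \tfrac{1}{2}$, completing the separation.
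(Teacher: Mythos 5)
Your approach is essentially the paper's: decompose $\sum_{\gamma_i=1}\rho^{|\gamma|-1}\fourier{f}{\gamma}^2$ by fixing the $d$ coordinates of a set $T\ni i$, bound the exponent uniformly on each fiber $\{\gamma|_T=y\}$, apply $SD(d,\epsilon')$ to each $(n-d)$-dimensional subcube, and close with the binomial sum $\sum_{y_i=1}\rho^{|y|-1}=(1+\rho)^{d-1}$ and $1+x\le e^x$. Two things differ, and both are worth flagging.

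First, a parameter mismatch. You bound $\Inf_i^{\delta}[f]$, faithfully following Definition \ref{defn small stable influences} and the literal theorem statement; the paper's proof bounds $\Inf_i^{1-\delta}[f]$. The calibration $\epsilon'=\epsilon\,e^{(d-1)(\delta/2-\ln 2)}$ is engineered for the $1-\delta$ reading: one needs $e^{\delta/2}\cdot\tfrac{2-\delta}{2}\le 1$, which holds by $1-x\le e^{-x}$. Under your $\delta$ reading, the analogous quantity is $e^{\delta/2}\cdot\tfrac{1+\delta}{2}$, which exceeds $1$ for $\delta$ not small (e.g.\ $\delta=1$ gives $\approx 1.65$), so your $\epsilon'(1+\delta)^{d-1}$ term does not collapse to $\epsilon$. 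The theorem statement is internally inconsistent with its own proof; you inherited the statement's side of the inconsistency.

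Second — and this you identify correctly while the paper silently drops it — $SD(d,\epsilon')$ gives $\sum_{\gamma\in C}\fourier{f}{\gamma}^2\le 2^{-d}+\epsilon'$, not $\le\epsilon'$. The paper's displayed chain of inequalities omits the $2^{-d}$ contribution, which after the binomial sum adds a term $\tfrac12\bigl(1-\tfrac{\delta}{2}\bigr)^{d-1}$ (in the $1-\delta$ reading) that is independent of $\epsilon'$ and cannot be driven below $\epsilon$ by any choice of error parameter. So the correct conclusion of this argument is $\Inf_i^{1-\delta}[f]\le\tfrac12(1-\delta/2)^{d-1}+\epsilon$, not $\le\epsilon$; you observe exactly this obstacle but then hand-wave it away in your plan. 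If you want a clean statement, keep the extra term explicitly or add a hypothesis relating $d$, $\delta$, and $\epsilon$ that forces it below $\epsilon$.

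The separation half of your argument is fine: $\Inf_\gamma[\chi_{\bone}]=[\,|\gamma|\text{ odd}\,]$ deviates from $\tfrac12$ by exactly $\tfrac12$ at every odd-weight $\gamma$ (in particular at weight one), which refutes $INF(k,\epsilon)$ for all $k\ge 1$ and $\epsilon<\tfrac12$, matching the paper.
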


		To summarize, we have figure \ref{figure relations between different theories} which includes the relationships between each theory of quasirandomness and our results in Section \secref{sec main result}.
		\begin{figure}[h]
			\centering
			\begin{tikzpicture}
			\tikzstyle{statement} = [rectangle,draw=black,fill = white,minimum width= 2cm,minimum height= 1cm];
			\tikzstyle{Implication} = [line width=1mm,draw=black!50];
			\tikzstyle{Incomparable} = [dotted,line width=1mm,draw=red!50];
			
			\node[statement] (gow) at (3,4) {$(\epsilon,d)$ $\F_2$-Regular, $d > 1$};
			\node[statement] (reg) at (0,0) {$(\epsilon,1)$ $\F_2$-Regular};
			\node[statement] (ssinf) at (-4,-2) {$(\epsilon,\delta)$-Small Stable Influences};
			\node[statement] (zzreg) at (4,-1) {$\Zp{2^n}$-Regularity};
			\node[statement] (ldr) at (0,-4) {$(\epsilon,k)$ $\R$-Regular, $k < n$};
			\node[statement,draw=blue] (inf) at (-4,2) {\textcolor{blue}{\textbf{$(\epsilon,d)$ Balanced Influences, $d > 0$}}};
			\node[statement] (bent) at (-4,4) {Bent};

			\draw[->,Implication] (gow) -- (reg);
			\draw[->,Implication] (reg) -- (ldr);
			\draw[->,Implication,draw=blue] (inf) -- node[midway,right] {Theorem \ref{thm relation to R-regularity}} (reg);
			\draw[->,Implication] (ssinf) -- node[midway,left] {Ex 6.5f \cite{o'donnell_2014}} (ldr);
			\draw[->,Implication,draw=blue] (inf) -- node[midway,left] {Theorem \ref{thm relation to small stable influences}}(ssinf);
			\draw[->,Implication] (bent) -- (inf);

			\draw[Incomparable,<->] (reg) -- node[midway,above left] {Ex 6.5d \cite{o'donnell_2014}} node[solid,midway,cross,sloped,minimum width =12pt,minimum height =12pt] {} (ssinf);
			\draw[Incomparable,->] (gow) -- node[midway,above left,color=blue] {Theorem \ref{thm relation to F2 regularity}} node[solid,midway,cross,sloped,minimum width =12pt,minimum height =12pt] {} (inf);
			\draw[Incomparable,->] (bent) -- node[midway,above,color=blue] {Theorem \ref{thm relation to F2 regularity}}  node[solid,midway,cross,sloped,minimum width =12pt,minimum height =12pt] {} (gow);
			\draw[Incomparable,<-] (ldr) -- node[midway,below right,color=blue] {Theorem \ref{thm relation to Z/2^nZ regularity}} node[solid,midway,cross,sloped,minimum width =12pt,minimum height =12pt] {} (zzreg);
			\end{tikzpicture}
			\caption{The relationships between different theories of quasi-randomness. Each box is a distinct theory of quasi-randomness. Each arrow is a strict implication. Beside each arrow we give a reference to the proof of the implication. If the result follows by definition, we omit the label for the sake of space. The results of this paper are in bold blue text and blue arrows. Non-implications are red dotted lines with an $X$ in the middle, with a citation for each result.}
			\label{figure relations between different theories}
		\end{figure}
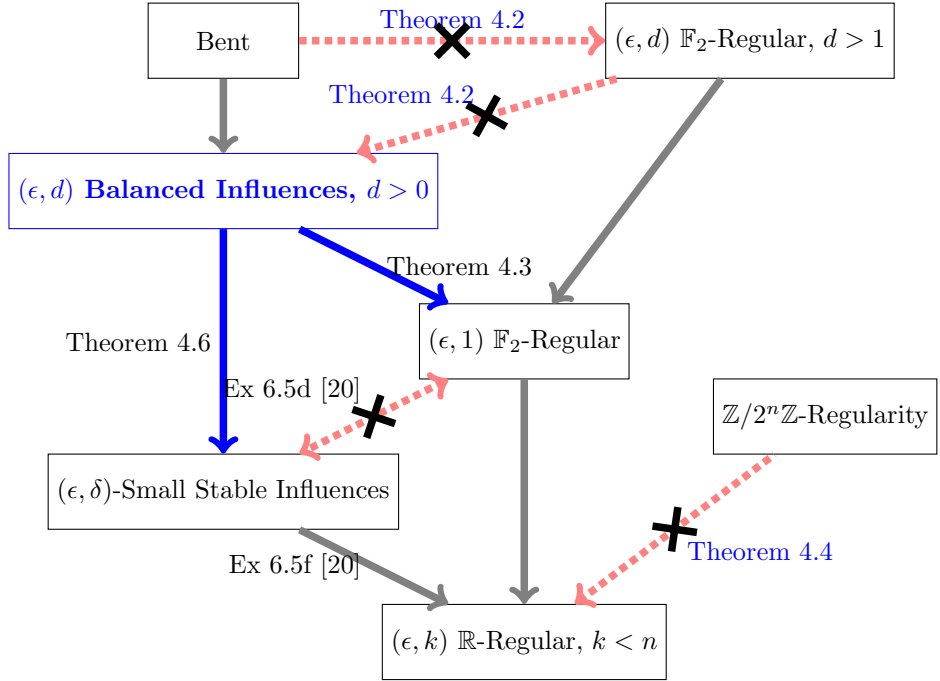
	
	\section{Proof of the Equivalences in Theorem \ref{thm main result}}\label{sec main proof}			
			We now proceed to the proof of our main theorems.  The following property of the directional influence will be quite useful later. 
		\begin{prop}\label{lem influence is autocorrellation is average value of derivative}
			If $\gamma\in \F_2^n$, then
			\[
			 f*f(\gamma) = 1 - 2\Inf_{\gamma}[f] 
			\]
		\end{prop}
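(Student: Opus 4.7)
The plan is to unpack the convolution directly from its definition and exploit the fact that $f$ takes values in $\spm$. Starting from $(f*f)(\gamma) = \exv_{y\in\F_2^n} f(\gamma + y)\,\overline{f(y)}$ and noting that $f$ is real-valued so $\overline{f(y)} = f(y)$, the product $f(\gamma+y)f(y)$ is an element of $\spm$ which equals $+1$ precisely when $f(\gamma+y) = f(y)$ and $-1$ precisely when $f(\gamma+y)\neq f(y)$.

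Next, I would split the expectation over $y$ according to whether the two values of $f$ agree or disagree. This gives
\[
(f*f)(\gamma) = \prob_{y\in\F_2^n}\!\left[f(y+\gamma) = f(y)\right] - \prob_{y\in\F_2^n}\!\left[f(y+\gamma) \neq f(y)\right].
\]
Using the complementary-probability identity, the first probability equals $1 - \Inf_\gamma[f]$ while the second equals $\Inf_\gamma[f]$ by Definition \ref{defn directional influence}. Substituting and simplifying yields $1 - 2\Inf_\gamma[f]$, which is the claimed identity.

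There is essentially no obstacle here: the statement is a one-line computation once the convolution is written out, so the only thing to be careful about is correctly reading the convolution formula with $g = h = f$ and invoking the real-valuedness of $f$ to drop the conjugation. No use of Fourier expansion or Parseval is required, though one could alternatively verify the identity by checking that the Fourier transforms of both sides agree (since $\widehat{f*f}(\gamma) = \fourier{f}{\gamma}^2$), but the direct argument is cleaner.
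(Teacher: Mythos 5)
Your argument is correct and is essentially the paper's proof read in the opposite direction: the paper starts from $1-2\Inf_\gamma[f]$, replaces $1-2[f(x)\neq f(x+\gamma)]$ by $f(x)f(x+\gamma)$ (using $f(x)\in\spm$), and recognizes the resulting expectation as the convolution. Both hinge on the same observation, so this is the same approach.
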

		\begin{proof}
			By definition of $\gamma$-influence,
			\begin{align*}
			1 - 2\Inf_{\gamma}[f] &= 1 - 2\prob[f(x) \neq f(x + \gamma)]\\
			&= \exv_{x\in\F_2^n} \left(1 - 2[f(x) \neq f(x + \gamma)]\right)\\
			&= \exv_{x\in\F_2^n} f(x)f(x + \gamma)\\
			&= f*f(\gamma) 
			\end{align*}
			where we use the fact that $f(x)\in \spm$ in the second and third equalities.
		\end{proof}

		Our main Theorem \ref{thm main result} will be proved in a sequence of implications each of which we state as a Theorem. The first Theorem in the proof of Theorem \ref{thm main result} relates the Balanced Influences Property to the Spectral Discrepancy Property. 	
		\begin{thm}\label{lem influences to spectral sample}
			For any fixed integer $d \geq 1$ and any $\epsilon > 0$, the Balanced Influences Property $INF(d,\epsilon/2)$ implies the Spectral Discrepancy Property $SD(d,\epsilon)$.
		\end{thm}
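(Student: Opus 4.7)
The plan is to Fourier-analyze the probability $\mathbb{P}_{\gamma \sim \mathcal{S}_f}[\gamma \in H]$ directly, exploiting the identity $(f*f)(x) = 1 - 2\Inf_x[f]$ from Proposition \ref{lem influence is autocorrellation is average value of derivative} together with the basic Fourier identity $\fourier{f*f}{\gamma} = \fourier{f}{\gamma}^2$. Since $f*f$ is therefore the inverse Fourier transform of the spectral sample distribution, summing $\fourier{f}{\gamma}^2$ over a subcube in the $\gamma$-space should translate into a sum of values of $f*f$ over the dual subspace in the $x$-space, on which the Balanced Influences hypothesis gives pointwise control.

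More explicitly, fix a subcube $H = C(S,z_0)$ of dimension $\ell \geq n - d$. I will write $H = \tilde{z_0} + W$ where $W = \{\gamma \in \F_2^n : \gamma_i = 0 \text{ for } i \in \overline{S}\}$ is a subspace of dimension $\ell$, and $\tilde{z_0} \in \F_2^n$ agrees with $z_0$ on $\overline{S}$ and vanishes on $S$. The annihilator $W^\perp = \{x \in \F_2^n : x_i = 0 \text{ for } i \in S\}$ then has dimension $n - \ell \leq d$, so every nonzero $x \in W^\perp$ has Hamming weight at most $d$ and thus lies in the Hamming ball on which the hypothesis $INF(d,\epsilon/2)$ applies.

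Using the standard subspace-character identity $[\gamma \in W] = 2^{\ell - n}\sum_{x \in W^\perp}\chi_\gamma(x)$, the affine shift $[\gamma \in H] = [\gamma + \tilde{z_0} \in W]$, and Fourier inversion for $f*f$, I swap the order of summation to obtain
\[
\mathbb{P}_{\gamma \sim \mathcal{S}_f}[\gamma \in H] \;=\; \sum_\gamma \fourier{f}{\gamma}^2 [\gamma \in H] \;=\; 2^{\ell - n}\sum_{x \in W^\perp}\chi_{\tilde{z_0}}(x)\,(f*f)(x).
\]
The $x = 0$ term contributes exactly $2^{\ell - n}$, matching the ``expected'' spectral weight on $H$. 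For each of the remaining $2^{n - \ell} - 1$ nonzero vectors $x \in W^\perp$, Proposition \ref{lem influence is autocorrellation is average value of derivative} together with $INF(d,\epsilon/2)$ gives $\abs{(f*f)(x)} = \abs{1 - 2\Inf_x[f]} < \epsilon$, and $\abs{\chi_{\tilde{z_0}}(x)} = 1$. A triangle-inequality bound therefore yields
\[
\abs{\mathbb{P}_{\gamma \sim \mathcal{S}_f}[\gamma \in H] - 2^{\ell - n}} \;\leq\; 2^{\ell - n}(2^{n - \ell} - 1)\,\epsilon \;<\; \epsilon,
\]
which is precisely $SD(d,\epsilon)$.

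The main task, rather than any genuine obstacle, is just the bookkeeping: identifying $H$ as an affine subspace, recognizing that the translate $\tilde{z_0}$ only introduces unit-modulus characters that vanish under the triangle inequality, and verifying that the weight bound $\abs{x} \leq n - \ell \leq d$ on nonzero $x \in W^\perp$ aligns exactly with the radius-$d$ assumption in the Balanced Influences Property. Once those are in place, the proof is a one-line Fourier computation.
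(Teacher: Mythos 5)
Your proposal is correct and matches the paper's argument: the paper likewise writes the indicator of the subcube as an average of characters over the orthogonal complement (expressed there via the projection matrix $M^\top$, whose image is exactly your $W^\perp$), identifies the resulting sum with values of $f*f$, isolates the $x = 0$ term contributing $2^{\ell - n}$, and bounds the remaining $2^{n-\ell}-1$ terms by $\epsilon$ using $\abs{(f*f)(x)} = \abs{1 - 2\Inf_x[f]} < \epsilon$. Your phrasing in terms of the annihilator subspace and the standard character identity is a slightly cleaner bookkeeping of the same computation.
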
	
			\begin{proof}
			Fix a subcube $C(S,z_0)$ where $\abs{S} = n - k$ where $k \leq d$. Let $M\in \F_2^{\overline{S}\times [n]}$ be the projection matrix which sends $z\in \F_2^n$ to $z|_{\overline{S}}$. 
			
			We observe that the indicator function $[\gamma\in C(S,z_0)]$ can be written as
			\begin{equation}
				[\gamma\in C(S,z_0)] = \exv_{v\in \F_2^{\overline{S}}} (-1)^{v\cdot (M\gamma- z_0)}.\label{eqn indicator function as exp sum}
			\end{equation} Indeed, if $\gamma\in C(S,z_0)$, then $M\gamma = z_0$, and $\exv_{v\in \F_2^{\overline{S}}} (-1)^{v\cdot (M\gamma- z_0)} = \exv_{v\in \F_2^{\overline{S}}} 1 = 1$. If $\gamma\notin C(S,z_0)$, then $\gamma_j \neq (z_0)_j$ for some $j\in \overline{S}$. Therefore, $\exv_{v\in \F_2^{\overline{S}}} (-1)^{v\cdot (M\gamma- z_0)} = \exv_{v\in \F_2^{\overline{S}}} (-1)^{v\cdot y}$ for some nonzero vector $y$. Hence, $\exv_{v\in \F_2^{\overline{S}}} (-1)^{v\cdot (M\gamma- z_0)} = 0$.  Let $f$ be a function which satisfies the Balanced Influence Property $INF(d,\epsilon/2)$. To use Equation (\ref{eqn indicator function as exp sum}), we expand the definition of the spectral sample.
			\begin{align*}
			\prob_{\gamma\sim \mathcal{S}_f}\left[\gamma\in C(S,z_0)\right] &= \sum_{\gamma\in C(S,z_0)} \fourier{f}{\gamma}^2 \nonumber\\
			&= \sum_{\gamma\in \F_2^n} \fourier{f}{\gamma}^2[\gamma\in C(S,z_0)] \nonumber\\
			&= \sum_{\gamma\in \F_2^n} \fourier{f}{\gamma}^2\exv_{v\in \F_2^{\overline{S}}} (-1)^{v\cdot (M\gamma- z_0)} ~~~~ \text{By Equation (\ref{eqn indicator function as exp sum})}\\
			&= 	\exv_{v\in \F_2^{\overline{S}}} (-1)^{v\cdot z_0}\sum_{\gamma\in \F_2^n} \fourier{f}{\gamma}^2 (-1)^{v\cdot M\gamma} \nonumber\\
			&= \exv_{v\in \F_2^{\overline{S}}} (-1)^{v\cdot z_0}\sum_{\gamma\in \F_2^n} \fourier{f}{\gamma}^2 (-1)^{M^\top v\cdot \gamma} \nonumber\\
			&= \exv_{v\in \F_2^{\overline{S}}} (-1)^{v\cdot z_0}\sum_{\gamma\in \F_2^n} \fourier{f}{\gamma}^2 \chi_{\gamma}(M^\top v) ~~~~~ \text{By definition of $\chi_\gamma$}\\
			&= \exv_{v\in \F_2^k}  (-1)^{v\cdot z_0}f*f(M^\top v) \label{eqn use defn of convolution}
			\end{align*} where we use the Fourier expansion of $f*f$ in the final line. Notice that $f*f(M^\top 0) = (f*f)(0) = 1$, and that $x = 0$ is the only solution to $M^\top x = 0$. Therefore, we can write
			\begin{align*}
					\abs{\prob_{\gamma\sim \mathcal{S}_f}\left[\gamma\in C(S,z_0)\right]- 2^{-k}} &= \abs{\sum_{v\in \F_2^k}  (-1)^{v\cdot z_0}\dfrac{f*f(M^\top v)}{2^k}  - 2^{-k}}\\
					&= \abs{\sum_{v\in \F_2^k\setminus\{\vec{0}\}}  (-1)^{v\cdot z_0}\dfrac{f*f(M^\top v)}{2^k}}\\
					&\leq \frac{1}{2^k}\sum_{v\in \F_2^k\setminus\{\vec{0}\}} \abs{f*f(Mv)}\\
					&= \frac{1}{2^k}\sum_{v\in \F_2^k\setminus\{\vec{0}\}} \abs{1 - 2\Inf_{M^\top v}[f]}
			\end{align*}
			where we use Lemma \ref{lem influence is autocorrellation is average value of derivative} in the final line. As $k \leq d$, we have $\abs{S} = n - \abs{\overline{S}} = k \leq d$, $\abs{S} \leq d$. Thus, $\abs{v} \leq d$. Since $M$ is a projection matrix, $\abs{M^\top v} = \abs{v} \leq d$ . There fore, we may apply $INF(d,\epsilon/2)$ to find
			\begin{align*}
			\abs{\prob_{\gamma\sim \mathcal{S}_f}\left[\gamma\in C(S,z_0)\right]- 2^{-k}} 
			&\leq \frac{1}{2^k}\sum_{v\in \F_2^k\setminus\{\vec{0}\}} \epsilon \leq \epsilon
			\end{align*} As $C(S,z_0)$ is arbitrary, $f$ also satisfies the Spectral Discrepancy Property $SD(d,\epsilon)$.
		\end{proof}

		For our next few implications, we will need the following result from O'Donnell's book \cite{o'donnell_2014}, translated into our notation.
		\begin{lem}\label{lem Fouriercoeffs of restrictions}[\cite{o'donnell_2014} Proposition 3.21]
			If $C(S,z)$ is a fixed subcube and $\gamma\in \F_2^S$, then
			\[
			\fourier{f|_{S,z}}{\gamma} = \sum_{\delta \in \F_2^{\overline{S}}} \fourier{f}{\delta \underset{S}{\otimes} \gamma}\chi_{\delta}(z)
			\]
		\end{lem}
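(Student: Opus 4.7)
The plan is to prove the formula by explicitly computing the Fourier expansion of $f|_{S,z}$ directly from that of $f$. First, write $f$ in its Fourier expansion on $\F_2^n$:
\[
f(x) = \sum_{\eta \in \F_2^n} \fourier{f}{\eta} \chi_\eta(x).
\]
Every $\eta \in \F_2^n$ decomposes uniquely as $\eta = \delta \underset{S}{\otimes} \gamma$ with $\gamma \in \F_2^S$ and $\delta \in \F_2^{\overline{S}}$, so the sum can be reindexed as a double sum over $\gamma \in \F_2^S$ and $\delta \in \F_2^{\overline{S}}$.

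Next, the key observation is that the character $\chi_\eta$ factorizes nicely on vectors of the form $x \underset{S}{\otimes} z$. Indeed, from the definition of the dot product,
\[
\chi_{\delta \underset{S}{\otimes} \gamma}(x \underset{S}{\otimes} z) = (-1)^{\gamma \cdot x + \delta \cdot z} = \chi_\gamma(x)\chi_\delta(z),
\]
where $\gamma \cdot x$ is the dot product on $\F_2^S$ and $\delta \cdot z$ is the dot product on $\F_2^{\overline{S}}$. Substituting into the Fourier expansion of $f$ evaluated at $x \underset{S}{\otimes} z$ gives
\[
f|_{S,z}(x) = f(x \underset{S}{\otimes} z) = \sum_{\gamma \in \F_2^S} \left( \sum_{\delta \in \F_2^{\overline{S}}} \fourier{f}{\delta \underset{S}{\otimes} \gamma} \chi_\delta(z) \right) \chi_\gamma(x).
\]

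Finally, since $\{\chi_\gamma\}_{\gamma \in \F_2^S}$ is an orthonormal basis for complex-valued functions on $\F_2^S$, the Fourier coefficients of $f|_{S,z}$ are uniquely determined by the coefficient of $\chi_\gamma(x)$ in the above display. Reading off this coefficient yields $\fourier{f|_{S,z}}{\gamma} = \sum_{\delta \in \F_2^{\overline{S}}} \fourier{f}{\delta \underset{S}{\otimes} \gamma} \chi_\delta(z)$, as claimed. There is no real obstacle here; the only care needed is in bookkeeping the direct sum decomposition $\F_2^n = \F_2^S \oplus \F_2^{\overline{S}}$ and verifying that characters factor cleanly across this decomposition.
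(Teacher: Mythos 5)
Your argument is correct. Note that the paper does not supply its own proof of this lemma---it is cited directly as Proposition 3.21 of O'Donnell's book---so there is no internal proof to compare against. Your approach (decompose $\eta=\delta\underset{S}{\otimes}\gamma$, factor $\chi_{\delta\underset{S}{\otimes}\gamma}(x\underset{S}{\otimes}z)=\chi_\gamma(x)\chi_\delta(z)$, substitute into the Fourier expansion of $f$, and read off coefficients via uniqueness of the Fourier expansion on $\F_2^S$) is exactly the standard argument and matches the proof given in O'Donnell. The only thing worth flagging is notational: strictly speaking, the paper's definition of $\underset{S}{\otimes}$ places the first argument on coordinates in $S$, so $\delta\underset{S}{\otimes}\gamma$ with $\delta\in\F_2^{\overline{S}}$ and $\gamma\in\F_2^S$ is a slight abuse of notation (the intended meaning, which you correctly adopt, is $\gamma$ on $S$ and $\delta$ on $\overline{S}$); you might add a sentence making this convention explicit before using the character factorization.
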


		Now we can relate the spectral sample to the Fourier coefficients of restricted functions. 
		\begin{thm}\label{lem spectral sample to restrictions fourier}
			For any fixed $d \geq 1$ and $\epsilon > 0$ the Spectral Discrepancy Property $SD(d,\epsilon)$ implies the Restriction Fourier Property $RF(d,\epsilon)$.
		\end{thm}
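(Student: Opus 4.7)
The plan is to apply Lemma \ref{lem Fouriercoeffs of restrictions} to rewrite $\fourier{f|_{S,z}}{\gamma}^2$, then average over $z\in \F_2^{\overline{S}}$ to collapse the resulting double sum via character orthogonality. What remains is a sum of squared Fourier coefficients of $f$ over a subcube of complementary dimension, to which $SD(d,\epsilon)$ applies directly.

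Explicitly, I would square the identity in Lemma \ref{lem Fouriercoeffs of restrictions} to get
\[
\fourier{f|_{S,z}}{\gamma}^2 = \sum_{\delta_1,\delta_2\in \F_2^{\overline{S}}} \fourier{f}{\delta_1 \underset{S}{\otimes} \gamma}\fourier{f}{\delta_2 \underset{S}{\otimes} \gamma}\chi_{\delta_1 + \delta_2}(z),
\]
and then average over $z\in \F_2^{\overline{S}}$. Since $\exv_{z\in \F_2^{\overline{S}}}\chi_{\delta_1 + \delta_2}(z) = [\delta_1 = \delta_2]$ by orthogonality of the characters on $\F_2^{\overline{S}}$, only the diagonal survives, giving
\[
\exv_{z\in \F_2^{\overline{S}}}\left[\fourier{f|_{S,z}}{\gamma}^2\right] = \sum_{\delta\in \F_2^{\overline{S}}} \fourier{f}{\delta \underset{S}{\otimes} \gamma}^2 = \prob_{\eta\sim \mathcal{S}_f}\left[\eta \in C(\overline{S},\gamma)\right],
\]
where the last equality is the definition of the spectral sample together with the observation that $\{\delta \underset{S}{\otimes} \gamma : \delta\in \F_2^{\overline{S}}\}$ is exactly the subcube $C(\overline{S},\gamma)$ that fixes the $S$-coordinates to $\gamma$ and varies freely on $\overline{S}$.

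For the parameter-matching step, note that $\dim(C(\overline{S},\gamma)) = \abs{\overline{S}} = n - \abs{S} \geq n - d$, since $RF(d,\epsilon)$ restricts attention to subcubes $C(S,z)$ with $\abs{S} \leq d$. Hence $SD(d,\epsilon)$ bounds the deviation of $\prob_{\eta\sim \mathcal{S}_f}[\eta \in C(\overline{S},\gamma)]$ from $2^{(n - \abs{S}) - n} = 2^{-\abs{S}} = 2^{-\dim(C(S,z))}$ by $\epsilon$. Combining with the identity above yields $RF(d,\epsilon)$ verbatim, matching the $\epsilon$ label on this arrow in Figure \ref{figure diagram of main proof}.

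There is no substantive obstacle here; the proof is essentially Parseval applied inside Lemma \ref{lem Fouriercoeffs of restrictions}. The one point requiring care is the complementarity of roles: the restriction $f|_{S,z}$ fixes the spatial coordinates in $\overline{S}$ and varies those in $S$, while the subcube on the spectral side fixes the frequency coordinates in $S$ (to $\gamma$) and varies those in $\overline{S}$. Once that duality is tracked, the dimensions and the target density $2^{-\abs{S}}$ align so that the bound transfers with no loss.
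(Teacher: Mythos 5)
Your proposal is correct and follows essentially the same route as the paper's proof: squaring Lemma \ref{lem Fouriercoeffs of restrictions}, collapsing the double sum by orthogonality of characters on $\F_2^{\overline{S}}$, recognizing the result as the spectral-sample mass on the subcube $C(\overline{S},\gamma)$ of dimension $n-\abs{S}\geq n-d$, and invoking $SD(d,\epsilon)$. The only cosmetic difference is writing $\chi_{\delta_1+\delta_2}(z)$ rather than $\chi_{\delta_1}(z)\chi_{\delta_2}(z)$, which is the same step.
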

		\begin{proof}
			This proof is essentially the proof of Corollary 3.22 in \cite{o'donnell_2014}, which we reproduce here in our notation for completeness. Suppose $f:\F_2^n\to \spm$ satisfies the Spectral Discrepancy Property $SD(d,\epsilon)$. Let $C(S,z)$ be an arbitrary subcube of dimension $k$ where $k \leq d$. Then for a fixed $\gamma\in \F_2^S$, Lemma \ref{lem Fouriercoeffs of restrictions} gives us
			\begin{align}
				\exv_{z\in \F_2^{\overline{S}}} \fourier{f|_{S,z}}{\gamma}^2 &= \exv_{z\in \F_2^{\overline{S}}} \left(\sum_{\delta \in \F_2^{\overline{S}}} \fourier{f}{\delta \underset{S}{\otimes} \gamma}\chi_{\delta}(z)\right)^2\nonumber\\
				&= \sum_{\delta_1,\delta_2\in \F_2^{\overline{S}}}  \exv_{z\in \F_2^{\overline{S}}} \fourier{f}{\delta_1 \underset{S}{\otimes} \gamma}\fourier{f}{\delta_2 \underset{S}{\otimes} \gamma}\chi_{\delta_1}(z)\chi_{\delta_2}(z)\nonumber\\
				&= 	\sum_{\delta_1,\delta_2\in \F_2^{\overline{S}}} \fourier{f}{\delta_1 \underset{S}{\otimes} \gamma}\fourier{f}{\delta_2 \underset{S}{\otimes} \gamma}\exv_{z\in \F_2^{\overline{S}}} \chi_{\delta_1}(z)\chi_{\delta_2}(z)\nonumber\\
				&= \sum_{\delta\in \F_2^{\overline{S}}} \fourier{f}{\delta \underset{S}{\otimes} \gamma}^2\label{eqn sd to rf orthognality}\\
				&= \prob_{\eta\sim \mathcal{S}_f} \left[\eta\in C(\overline{S},\gamma)\right]\label{eqn sd to rf spectral sample def}
			\end{align} where we use the orthogonality of the Fourier characters in line (\ref{eqn sd to rf orthognality}) and the definition of the spectral sample line (\ref{eqn sd to rf spectral sample def}). As $k \leq d$, $\abs{\overline{S}} = n - k \geq n - d$.  Thus we can apply Property $SD(d,\epsilon)$ to $C(\overline{S},z)$ to find that
			\[
				\abs{\prob_{\eta\sim \mathcal{S}_f} \left[\eta\in C(\overline{S},\gamma)\right] - 2^{-k}} < \epsilon
			\] for every $\gamma\in \F_2^S$. Hence,
			\[
				\abs{\exv_{z\in \F_2^{\overline{S}}} \fourier{f|_{S,z}}{\gamma}^2 - 2^{-k}} < \epsilon
			\] for every $\gamma\in \F_2^S$. As $C(S,z)$ is arbitrary, $f$ also satisfies the Restriction Fourier Property $RF(d,\epsilon)$.
		\end{proof}
		
		With a bound on the Fourier coefficients of restricted functions, we can bound the convolution of a restricted function with itself.
		\begin{thm}\label{lem restriction Fourier to restriction convolution}
				For any fixed $d \geq 1$ and $\epsilon > 0$ the Restriction Fourier Property $SD(d,\epsilon/2^d)$ implies the Restriction Convolution Property $RC(d,\epsilon)$
		\end{thm}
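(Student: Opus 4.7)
The plan is to express the average self-convolution in terms of Fourier coefficients of the restrictions, and then match the inner product $[x = 0]$ against the ``bent-like'' expression that would come from every $|\fourier{f|_{S,z}}{\gamma}|^2$ being exactly $2^{-|S|}$. Concretely, for any $g:\F_2^S\to\spm$ we have $\fourier{g*g}{\gamma} = \fourier{g}{\gamma}^2$, so by Fourier inversion
\[
(g*g)(x) \;=\; \sum_{\gamma\in \F_2^S} \fourier{g}{\gamma}^2\,\chi_\gamma(x).
\]
Applying this with $g = f|_{S,z}$ and then averaging over $z\in \F_2^{\overline{S}}$ will move the expectation inside the sum, giving an expression whose coefficients are precisely the quantities controlled by $RF(d,\cdot)$.

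Next I would observe that the ``target'' $[x = 0]$ admits the analogous Fourier expansion
\[
[x = \vec{0}] \;=\; \sum_{\gamma\in \F_2^S} 2^{-|S|}\,\chi_\gamma(x),
\]
by orthogonality of the Fourier characters on $\F_2^S$ (this is exactly the identity satisfied by $g*g$ when $g$ is bent, consistent with Proposition~\ref{lem convolution of bent functions}). Subtracting the two expansions yields
\[
\exv_{z\in \F_2^{\overline{S}}}(f|_{S,z}*f|_{S,z})(x) \;-\; [x=\vec 0] \;=\; \sum_{\gamma\in \F_2^S}\Bigl(\exv_{z\in \F_2^{\overline S}} \fourier{f|_{S,z}}{\gamma}^2 - 2^{-|S|}\Bigr)\chi_\gamma(x).
\]

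To finish, I apply the triangle inequality, use $|\chi_\gamma(x)| = 1$, and invoke the hypothesis $RF(d,\epsilon/2^d)$ to bound each of the $2^{|S|}$ summands by $\epsilon/2^d$. Since $|S|\le d$, the sum is at most $2^{|S|}\cdot \epsilon/2^d \le \epsilon$, which is exactly $RC(d,\epsilon)$ for the arbitrarily chosen $S$.

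I do not expect any real obstacle here: the argument is essentially Parseval/Plancherel combined with orthogonality of characters, and the factor $2^d$ loss in the error is forced by the triangle inequality over the $2^{|S|}$ frequencies in $\F_2^S$. The only minor care needed is that the expectation over $z$ commutes with the finite Fourier sum on $\F_2^S$, which is immediate because both are finite linear operations.
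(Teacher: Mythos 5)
Your proposal is correct and follows essentially the same route as the paper: expand $(f|_{S,z}*f|_{S,z})(x)$ and $[x=\vec 0]$ in the Fourier basis on $\F_2^S$, interchange the $z$-average with the finite sum, apply the triangle inequality with $|\chi_\gamma(x)|=1$, and invoke $RF(d,\epsilon/2^d)$ termwise. The only cosmetic difference is that the paper first notes that $RF(d,\epsilon/2^d)$ implies $RF(k,\epsilon/2^k)$ for each $k\le d$ and then bounds each summand by $\epsilon/2^k$, whereas you simply bound each by $\epsilon/2^d$ and use $2^{|S|}\le 2^d$ at the end; both yield the same final bound.
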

		\begin{proof}
			Let $f:\F_2^n\to \spm$ have the Restriction Fourier Property $RF(d,\epsilon/2^d)$, and note that $f$ also satisfies $RF(k,\epsilon/2^k)$ for every $k \leq d$. Fix $k\in \N$ such that $k \leq d$ and a set $S\subseteq [n]$ where $|S|= k$.
			
			 We have
			\begin{align*}
				\exv_{z\in \F_2^{\overline{S}}} (f|_{S,z}*f|_{S,z})(x) &= \exv_{z\in \F_2^{\overline{S}}} \sum_{\delta\in \F_2^S} \fourier{f|_{S,z}}{\delta}^2\chi_{\delta}(x)\\
				&=  \sum_{\delta\in \F_2^S} \left(\exv_{z\in \F_2^{\overline{S}}} \fourier{f|_{S,z}}{\delta}^2\right)\chi_{\delta}(x)
			\end{align*}
			Using the Fourier expansion of the indicator function $[x = 0]$, we then have
			\begin{align*}
				\abs{\exv_{z\in \F_2^{\overline{S}}} (f|_{S,z}*f|_{Sz})(x) - [x = 0]} &= \abs{\sum_{\delta\in \F_2^S} \left(\exv_{z\in \F_2^{\overline{S}}} \fourier{f|_{S,z}}{\delta}^2 - \frac{1}{2^k}\right)\chi_{\delta}(x)}\\
				&\leq \sum_{\delta\in \F_2^S} \abs{\exv_{z\in \F_2^{\overline{S}}} \fourier{f|_{S,z}}{\delta}^2 - \frac{1}{2^k}}\\
				&\leq \sum_{\delta\in \F_2^S} \frac{\epsilon}{2^k}\\
				&\leq \epsilon
			\end{align*} where we use $RF(k,\epsilon/2^k)$ in the penultimate line. Since $k$ and $S$ are arbitrary, we conclude that $f$ also satisfies the Restriction Convolution Property $RC(d,\epsilon)$.
		\end{proof}
	
		Lemma (\ref{lem influence is autocorrellation is average value of derivative}) gives us a further property:
		\begin{thm}\label{thm restriction convolution to restriction influences}
				For any fixed $d \geq 1$ and $\epsilon > 0$ the Restriction Convolution Property $SD(d,2\epsilon)$ implies the Restriction Influences Property $RF(d,\epsilon)$.
		\end{thm}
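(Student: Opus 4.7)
The plan is to apply Proposition \ref{lem influence is autocorrellation is average value of derivative} pointwise to the restricted function $f|_{S,z}$, then average over $z$, and finally specialize to nonzero $\gamma$ so that the indicator $[x = \vec{0}]$ in property $RC$ vanishes.

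First I would fix an arbitrary $S \subseteq [n]$ with $\abs{S} \leq d$ and a nonzero $\gamma \in \F_2^S$. Since $f|_{S,z} : \F_2^S \to \spm$ is itself a Boolean function on $\F_2^S$, Proposition \ref{lem influence is autocorrellation is average value of derivative} applies directly (on the domain $\F_2^S$) to give
\[
(f|_{S,z} * f|_{S,z})(\gamma) = 1 - 2\Inf_{\gamma}[f|_{S,z}].
\]
Taking expectation over $z \in \F_2^{\overline{S}}$ and using linearity yields
\[
\exv_{z \in \F_2^{\overline{S}}} (f|_{S,z} * f|_{S,z})(\gamma) = 1 - 2 \exv_{z \in \F_2^{\overline{S}}} \Inf_{\gamma}[f|_{S,z}].
\]

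Next, since $\gamma \neq \vec{0}$, the indicator $[\gamma = \vec{0}]$ equals $0$, so the Restriction Convolution Property $RC(d, 2\epsilon)$ applied at the point $x = \gamma$ gives
\[
\abs{\exv_{z \in \F_2^{\overline{S}}} (f|_{S,z} * f|_{S,z})(\gamma)} < 2\epsilon.
\]
Substituting the identity above and dividing by $2$ yields
\[
\abs{\exv_{z \in \F_2^{\overline{S}}} \Inf_{\gamma}[f|_{S,z}] - \half} < \epsilon,
\]
which is the Restriction Influences Property $RI(d, \epsilon)$ since $S$ and $\gamma$ were arbitrary with $\abs{S} \leq d$ and $\gamma \neq \vec{0}$.

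There is no real obstacle here; this is essentially the restriction-level version of the observation that influences and self-convolutions are affinely related, and the choice of constant $2\epsilon$ in $RC$ is precisely calibrated so that the factor of $2$ in Proposition \ref{lem influence is autocorrellation is average value of derivative} cancels out to produce an $\epsilon$ bound in $RI$.
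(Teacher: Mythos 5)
Your argument is correct and matches the paper's proof essentially line for line: both apply Proposition \ref{lem influence is autocorrellation is average value of derivative} to the restricted function $f|_{S,z}$, average over $z$, and then invoke $RC(d,2\epsilon)$ at $x=\gamma\neq\vec{0}$ so the indicator vanishes and the factor of $2$ cancels. No differences in approach or substance.
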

		\begin{proof}
			Suppose $f$  satisfies the Restriction Convolution Property $RC(d,2\epsilon)$. By Lemma (\ref{lem influence is autocorrellation is average value of derivative}) applied to $f|_{S,z}$, we have
			\[
				 \Inf_{\gamma}[f|_{S,z}] = \frac{1 - f|_{S,z}*f|_{S,z}(\gamma)}{2}
			\] for any fixed $S$ and $z$. Now fix $k\in \N$ such that $k \leq d$ and $S\subseteq [n]$ where $|S| = k$. Then,
			\begin{align*}
					\abs{\exv_{z\in \F_2^{\overline{S}}} \Inf_{\gamma}[f|_{S,z}] - \half} &= \abs{\left(\exv_{z\in \F_2^{\overline{S}}} \frac{1 - f|_{S,z}*f|_{S,z}(\gamma)}{2}\right) - \half}\\
					&=\abs{\exv_{z\in \F_2^{\overline{S}}} \frac{f|_{S,z}*f|_{S,z}(\gamma)}{2}}
			\end{align*} As $\gamma \neq 0$, $RC(d,2\epsilon)$ implies that the above is at most $\epsilon$. Hence, $f$ satisfies the Restriction Influences Property $RI(d,\epsilon)$.
		\end{proof}
	
		As one might imagine, the influences of restricted functions relate nicely to the influences of the original function. 
		\begin{thm}\label{thm restriction influences to influences}
		For any fixed $d \geq 1$ and $\epsilon > 0$, the Restriction Influences Property $RI(d,\epsilon)$ implies the Balanced Influences Property $INF(d,\epsilon)$.
		\end{thm}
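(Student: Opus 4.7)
The plan is to reduce the directional influence of $f$ along $\gamma$ to an average of directional influences of restrictions, by restricting to the subcube whose active coordinates are exactly the support of $\gamma$. Fix a nonzero $\gamma \in \F_2^n$ with $\abs{\gamma} \le d$, and set $S := \Supp(\gamma) \subseteq [n]$, so that $\abs{S} = \abs{\gamma} \le d$ and $\gamma$ can be viewed as an element of $\F_2^S$ (its restriction to $\overline{S}$ is zero).

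Next I would decompose the expectation defining $\Inf_{\gamma}[f]$ along $S$ and $\overline{S}$. Writing each $x \in \F_2^n$ as $x_S \underset{S}{\otimes} x_{\overline{S}}$ and using that $\gamma$ vanishes on $\overline{S}$, we get
\[
x + \gamma \;=\; (x_S + \gamma) \underset{S}{\otimes} x_{\overline{S}}.
\]
So by Definition \ref{defn directional influence} and Definition \ref{defn restrictions},
\begin{align*}
\Inf_{\gamma}[f]
&= \exv_{x_{\overline{S}} \in \F_2^{\overline{S}}}\;\exv_{x_S \in \F_2^S}\Bigl[f\bigl(x_S \underset{S}{\otimes} x_{\overline{S}}\bigr) \neq f\bigl((x_S + \gamma)\underset{S}{\otimes} x_{\overline{S}}\bigr)\Bigr] \\
&= \exv_{z \in \F_2^{\overline{S}}}\;\exv_{x_S \in \F_2^S}\Bigl[f|_{S,z}(x_S) \neq f|_{S,z}(x_S + \gamma)\Bigr] \\
&= \exv_{z \in \F_2^{\overline{S}}} \Inf_{\gamma}[f|_{S,z}].
\end{align*}

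Finally, since $\abs{S} \le d$ and $\gamma \in \F_2^S$ is nonzero, hypothesis $RI(d,\epsilon)$ applied to the set $S$ and character $\gamma$ gives
\[
\bigl|\Inf_{\gamma}[f] - \tfrac{1}{2}\bigr|
\;=\; \bigl|\exv_{z \in \F_2^{\overline{S}}} \Inf_{\gamma}[f|_{S,z}] - \tfrac{1}{2}\bigr|
\;<\; \epsilon,
\]
and since $\gamma$ was an arbitrary nonzero element of the Hamming ball of radius $d$ around $0$, we conclude that $f$ satisfies $INF(d,\epsilon)$. There is no real obstacle here; the only observation to make is the correct choice $S = \Supp(\gamma)$, after which the identity $\Inf_{\gamma}[f] = \exv_{z} \Inf_{\gamma}[f|_{S,z}]$ is essentially tautological and no loss in $\epsilon$ is incurred.
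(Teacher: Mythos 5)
Your proof is correct and follows essentially the same route as the paper: the core of both arguments is the tautological identity $\Inf_{\gamma\underset{S}{\otimes}0}[f] = \exv_{z\in\F_2^{\overline S}}\Inf_{\gamma}[f|_{S,z}]$, obtained by splitting the expectation over $\F_2^n$ into its $S$ and $\overline S$ coordinates. The only (cosmetic) difference is direction of travel — you start from the target $\gamma\in\F_2^n$ and set $S=\Supp(\gamma)$, while the paper fixes $S$ and $\gamma\in\F_2^S$ and then observes that every vector of weight $\le d$ arises as $\gamma\underset{S}{\otimes}0$ — so both are fine and incur no loss in $\epsilon$.
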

		\begin{proof}
			Suppose $f:\F_2^n\to \spm$ satisfies the Restriction Influences Property $RF(d,\epsilon)$. Fix $S\subseteq [n]$ with $|S| \leq d$ and $\gamma\in \F_2^S$. Then,
			\begin{align*}
				\exv_{z\in \F_2^{\overline{S}}} \Inf_{\gamma}[f] &= \exv_{z\in \F_2^{\overline{S}}} \exv_{x\in \F_2^S} [f|_{S,z}(x + \gamma) \neq f|_{S,z}(x)]\\
				&= \exv_{z\in \F_2^{\overline{S}}} \exv_{x\in \F_2^S} [f(x \underset{S}{\otimes} z + \gamma\underset{S}{\otimes} z))\neq f(x\underset{S}{\otimes} z)]
			\end{align*}
			Let $y = x \underset{S}{\otimes} z$ and $\delta = \gamma\underset{S}{\otimes} 0$. Note that $\abs{\delta} \leq d$ as $\abs{S} \leq d$.
			\begin{align*}	
				&= \exv_{y\in \F_2^n} [f(y + \delta) \neq f(y)]\\
				&= \Inf_{\delta}[f]
			\end{align*} Since any vector of Hamming weight at most $d$ can be represented as $\gamma\underset{S}{\otimes} 0$ for some set $S$ with $\abs{S} \leq d$ and $\gamma\in \F_2^S$, the claim follows by $RI(d,\epsilon)$. Thus $f$ also satisfies the Balanced Influences Property $INF(d,\epsilon)$.
		\end{proof}
	
	Now we can cross over to more combinatorial properties.
	\begin{thm}\label{thm restriction convolution to local strong regularity}
		For any fixed $d \geq 1$ and $\epsilon > 0$, the Restriction Convolution Property $RC(d,4\epsilon)$ implies that Local Strong Regularity Property $LSR(d,\epsilon)$.
	\end{thm}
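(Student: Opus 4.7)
The plan is to reduce the co-neighbor count in $BC(f)$ to an evaluation of the convolution $f*f$, and then recognize $(f*f)(w)$ for $|w| \leq d$ as an averaged restricted self-convolution, to which $RC(d,4\epsilon)$ applies.

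First I will rewrite the common-neighborhood density in Fourier-analytic form. Let $g(x) = (1-f(x))/2$, the $\{0,1\}$-indicator of $\{f = -1\}$. For any two vertices $u, v$ on the left side of $BC(f)$,
\[
\frac{|N_{BC(f)}(u)\cap N_{BC(f)}(v)|}{2^n}
 = \exv_{w\in\F_2^n} g(u+w)g(v+w).
\]
Expanding $(1-f)/2$ and using $\exv_w f(u+w) = \fourier{f}{0}$ (and likewise for $v$), together with the substitution $y = w+v$ to get
\[
\exv_w f(u+w)f(v+w) = \exv_y f(y + (u+v))f(y) = (f*f)(u+v),
\]
I obtain
\[
\frac{|N(u)\cap N(v)|}{2^n} = \frac14\bigl(1 - 2\fourier{f}{0} + (f*f)(u+v)\bigr).
\]
Hence the quantity we need to bound is exactly $\tfrac14\,(f*f)(u+v)$.

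Second, I will show that the full convolution $(f*f)$ evaluated at a low-weight vector is captured by averaged restricted self-convolutions. Fix distinct $u,v$ in the left part with $|u-v|\le d$, set $S = \supp(u+v)$, and let $x = (u+v)|_S \in \F_2^S$; then $|S|\le d$ and $x\neq 0$. For any $z \in \F_2^{\overline S}$ and any $y\in \F_2^S$,
\[
(x+y)\underset{S}{\otimes}z = (y\underset{S}{\otimes}z) + (x\underset{S}{\otimes}0) = (y\underset{S}{\otimes}z) + (u+v),
\]
so by the definition of restriction,
\[
\exv_{z\in\F_2^{\overline S}}\bigl(f|_{S,z} * f|_{S,z}\bigr)(x)
 = \exv_{z}\exv_{y\in\F_2^S} f\bigl((y\underset{S}{\otimes}z)+(u+v)\bigr)\, f\bigl(y\underset{S}{\otimes}z\bigr)
 = \exv_{w\in\F_2^n} f(w+(u+v))f(w) = (f*f)(u+v).
\]

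Finally, since $x\neq 0$ and $|S|\le d$, property $RC(d,4\epsilon)$ applied to this $S$ and $x$ yields
\[
\bigl|(f*f)(u+v)\bigr|
 = \Bigl|\exv_{z\in\F_2^{\overline S}}\bigl(f|_{S,z}*f|_{S,z}\bigr)(x) - [x=0]\Bigr|
 < 4\epsilon.
\]
Combining with the first step gives
\[
\left|\frac{|N(u)\cap N(v)|}{2^n} - \left(\frac14 - \frac{\fourier{f}{0}}{2}\right)\right| = \frac{|(f*f)(u+v)|}{4} < \epsilon,
\]
which is exactly $LSR(d,\epsilon)$.

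There is no real obstacle here; the only subtle point is the identity $\exv_z(f|_{S,z}*f|_{S,z})(x)=(f*f)(x\underset{S}{\otimes}0)$, which mirrors the algebra already used to pass from Restriction Convolution to Restriction Influences, so the factor of $4$ lost in $\epsilon$ comes solely from the $1/4$ in the expansion of $(1-f)/2\cdot(1-f)/2$.
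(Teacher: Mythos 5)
Your proof is correct and follows essentially the same route as the paper's: expand the normalized co-neighborhood count into $\tfrac14(1-2\fourier{f}{0}+(f*f)(u+v))$, identify $(f*f)(u+v)$ with the averaged restricted self-convolution over a set $S \supseteq \supp(u+v)$ of size at most $d$, and apply $RC(d,4\epsilon)$ at the nonzero point $x = (u+v)|_S$. The only cosmetic difference is that you take $S = \supp(u+v)$ exactly rather than padding it out to size $d$, which is a slightly cleaner choice but changes nothing substantive.
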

	\begin{proof}
		Suppose $f:\F_2^n\to \spm$ satisfies the Restriction Convolution Property $RC(d,4\epsilon)$. Fix $u\text{ and }v$ in the left part of the bipartite Cayley graph of $f$ such that $0 < \abs{u - v} \leq d$. Let $S\subseteq [n]$ with $\abs{S} = d$ be a set which contains all the indices on which $u$ and $v$ differ. Then,
		\begin{align*}
			\abs{\frac{\abs{N(u) \cap N(v)}}{2^n} - \frac{1}{4} + \frac{\fourier{f}{0}}{2}} &= \abs{\exv_{c\in \F_2^n} \frac{(1 - f(u + c))(1 - f(v + c))}{4} - \frac{1}{4} + \frac{\fourier{f}{0}}{2}}\\
			&= \frac{1}{4}\abs{2\fourier{f}{0} - \left(\exv_{c\in \F_2^n} f(u + c) + f(c + v)\right) + \exv_{c\in \F_2^n} f(u + c)f(v + c)}\\
			&= \frac{1}{4} \abs{\exv_{c\in \F_2^n} f(u + c)f(v + c)}
		\end{align*}
		where we use the definition $\fourier{f}{0}$ in the final line. Now write $u = x\underset{S}{\otimes} z$, $v = y\underset{S}{\otimes} z$, and $c = w\underset{S}{\otimes} a$. We then have
		\begin{align*}	
			\abs{\frac{\abs{N(u) \cap N(v)}}{2^n} - \frac{1}{4} + \frac{\fourier{f}{0}}{2}}
			&=  \frac{1}{4} \abs{\exv_{a\in \F_2^{\overline{S}}} \exv_{w\in \F_2^{S}} f(x\underset{S}{\otimes} z + w\underset{S}{\otimes} a)f(y\underset{S}{\otimes} z + w\underset{S}{\otimes} a)}\\
			&= \frac{1}{4} \abs{\exv_{w\in \F_2^{S}} \exv_{a\in \F_2^{\overline{S}}}  f(9x + w)\underset{S}{\otimes}(z + a))f((y + w)\underset{S}{\otimes}(z + a))}\\
			\intertext{Noting that $w + w = 0$,}
			&=  \frac{1}{4} \abs{\exv_{w\in \F_2^{S}} \exv_{a\in \F_2^{\overline{S}}} f|_{S,a + z}*f|_{S,a + z}(x + y)}\\
			&= \frac{1}{4} \abs{\exv_{a'\in \F_2^{\overline{S}}} f|_{S,a'}*f|_{S,a'}(x +  y)}\\
			&\leq \epsilon
		\end{align*} where we use $RC(d,4\epsilon)$ and the fact that $u \neq v$ in the final line. Thus $f$ also satisfies the Local Strong Regularity Property $LSR(d,\epsilon)$.
	\end{proof}
	
	\begin{thm}\label{thm influence to local strong regularity}
		For any fixed $d\geq 1$ and $\epsilon > 0$, the Balanced Influences Property $INF(d,4\epsilon)$ implies the Local Strong Regularity Property $LSR(d,\epsilon)$.
	\end{thm}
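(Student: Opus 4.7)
The plan is to reduce the quantity $|N(u)\cap N(v)|/2^n$ to an expression involving the $(u+v)$-influence of $f$ by a direct expansion, and then apply the Balanced Influences hypothesis. This is essentially the ``unrestricted'' version of the calculation in Theorem \ref{thm restriction convolution to local strong regularity}, so I expect it to be shorter, avoiding the subcube bookkeeping.

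First, I would fix $u, v$ on the left part of $BC(f)$ with $0 < |u-v| \leq d$, and write
\[
\frac{|N(u)\cap N(v)|}{2^n} = \exv_{w\in \F_2^n} \frac{1 - f(u+w)}{2}\cdot\frac{1 - f(v+w)}{2},
\]
using $[f(x)=-1] = (1-f(x))/2$. Expanding the product, the constant term is $1/4$, each linear term evaluates to $\tfrac{1}{4}\exv_w f(u+w) = \tfrac{1}{4}\fourier{f}{0}$ after shifting the expectation variable, and the quadratic term becomes $\tfrac{1}{4}(f*f)(u+v)$ after the substitution $y = v+w$ (so that $u+w = (u+v)+y$).

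Second, I would apply Proposition \ref{lem influence is autocorrellation is average value of derivative} to rewrite $(f*f)(u+v) = 1 - 2\Inf_{u+v}[f]$. Substituting and collecting terms,
\[
\frac{|N(u)\cap N(v)|}{2^n} - \left(\frac{1}{4} - \frac{\fourier{f}{0}}{2}\right) = \frac{1}{2}\left(\frac{1}{2} - \Inf_{u+v}[f]\right).
\]

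Third, since $u\neq v$ and $|u-v| \leq d$, the vector $\gamma := u+v$ is nonzero with $|\gamma| \leq d$, so the hypothesis $INF(d,4\epsilon)$ yields $|\Inf_{\gamma}[f] - \tfrac{1}{2}| < 4\epsilon$. Plugging this in bounds the above display by $2\epsilon < \epsilon$ in magnitude (in fact giving a slightly tighter bound than required), which is exactly the conclusion of $LSR(d,\epsilon)$.

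The ``main obstacle'' is essentially administrative: one needs to be careful that the vector $\gamma = u+v$ has Hamming weight $|u-v| \leq d$ and is nonzero, so that the influence hypothesis is applicable, and to track the correct arithmetic in $\F_2$ when shifting the expectation variable. No genuinely new ideas beyond Proposition \ref{lem influence is autocorrellation is average value of derivative} are needed.
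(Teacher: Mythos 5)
Your proof follows essentially the same route as the paper: expand the normalized co-degree of $u,v$ in $BC(f)$, cancel the linear terms using $\exv_{w} f(u+w) = \fourier{f}{0}$, reduce to $\tfrac{1}{4}(f*f)(u+v)$, and apply Proposition~\ref{lem influence is autocorrellation is average value of derivative}. The algebra up through the identity
\[
\frac{|N(u)\cap N(v)|}{2^n} - \left(\frac{1}{4} - \frac{\fourier{f}{0}}{2}\right) = \frac{1}{2}\left(\frac{1}{2} - \Inf_{u+v}[f]\right)
\]
is correct, and so is the observation that $\gamma = u+v$ is a nonzero vector of weight at most $d$.

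The final numerical step, however, contains an arithmetic slip: from $|\Inf_{u+v}[f] - \tfrac{1}{2}| < 4\epsilon$ you obtain a bound of $\tfrac{1}{2}\cdot 4\epsilon = 2\epsilon$ on the displayed quantity, and ``$2\epsilon < \epsilon$'' is false for $\epsilon > 0$. This actually mirrors a constant-factor looseness in the paper itself: the paper's own chain of equalities yields $\tfrac{1}{4}|f*f(u+v)| = \tfrac{1}{2}|\tfrac{1}{2} - \Inf_{u+v}[f]| < 2\epsilon$, not $\leq \epsilon$ as written, so $INF(d,4\epsilon)$ only gives $LSR(d,2\epsilon)$. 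The hypothesis $4\epsilon$ appears to have been carried over mechanically from Theorem~\ref{thm restriction convolution to local strong regularity}, where the $4\epsilon$ is applied directly to the autocorrelation; here the conversion $f*f(\gamma) = 1 - 2\Inf_{\gamma}[f]$ already contributes a factor of $2$, so the tight hypothesis is $INF(d,2\epsilon)$. You should simply replace ``$2\epsilon < \epsilon$'' with ``$2\epsilon$'' and either state $INF(d,2\epsilon)$ as the hypothesis or accept the conclusion $LSR(d,2\epsilon)$; the qualitative implication, and hence the equivalence in Theorem~\ref{thm main result}, is unaffected.
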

	\begin{proof}
		Suppose $f;\F_2^n\to \spm$ satisfies the Balanced Influences Property $INF(d,4\epsilon)$. Fix $u,v$ in the left part of the bipartite Cayley graph of $f$ such that $0 < \abs{u - v} \leq d$. Then,
		\begin{align*}
			\abs{\frac{\abs{N(u) \cap N(v)}}{2^n} - \frac{1}{4} + \frac{\fourier{f}{0}}{2}} &= \abs{\exv_{z\in \F_2^n} \frac{(1 - f(u + z))(1 - f(v + z))}{4} - \frac{1}{4} + \frac{\fourier{f}{0}}{2}}\\
			&= \frac{1}{4}\left(2\fourier{f}{0} - \left(\exv_{z\in \F_2^n} f(u + z) + f(z + z)\right) + \exv_{z\in \F_2^n} f(u + z)f(v + z)\right)\\
			&= \frac{1}{4}f*f(u + v)\\
			&\leq \epsilon
		\end{align*} where we use Lemma (\ref{lem influence is autocorrellation is average value of derivative}) in the third line and $INF(d,4\epsilon)$ in the final line. It follows that $f$ also satisfies the Local Strong Regularity Property $LSR(d,\epsilon)$.
	\end{proof}

	For $a,b\in \N$, let $(a)_b$ denote the \textbf{Falling Factorial} defined by $(a)_b = a(a - 1)\dots(a - b + 1)$. Recall that $S\hookrightarrow T$ is the set of injective functions from $S$ to $T$. For a graph $G$ two vertices $x\in V(G)$ and $y\in V(G)$, let $x\underset{G}{\sim} y$ denote the proposition that $x$ is adjacent to $y$. If the graph is clear from context, we will simply write $x\sim y$.  
	
	\begin{thm}\label{thm local strong regularity to degree two embeddings}
		 For any fixed $d\geq 1$ and $\epsilon > 0$, the Local Strong Regularity Property $LSR(d,\frac{\epsilon p}{2e\abs{D_2}})$ implies the Degree-$2$ Homomorphisms Property $DTE(d,\epsilon)$ where $D_2$ is the set of vertices of degree $2$ in the right part of a bipartite graph $G$..
	\end{thm}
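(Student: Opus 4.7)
The plan is to expand $\bhom_\psi(G, BC(f))$ into a product of local terms controlled by $LSR$ and then carefully handle the injectivity constraint. Without loss of generality I delete isolated vertices in $V$ (they integrate out of the expectation, since $[\phi(v)\in\F_2^n]\equiv 1$) and assume $|V| = m = r_1 + r_2$. I rewrite
\[
\bhom_\psi(G, BC(f)) = \exv_{\phi: V \hookrightarrow \F_2^n}\prod_{v \in V}[\phi(v) \in A_v],
\]
where $A_v := \bigcap_{u \in N_G(v)} N_{BC(f)}(\psi(u))$ is the subset of $R(BC(f))$ adjacent to every $\psi(u)$ with $u \in N_G(v)$.

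Next, I estimate the densities $\alpha_v := |A_v|/2^n$ and their product. For $\deg_G(v) = 1$ with unique neighbor $u$, a direct computation with $\fourier{f}{0}$ gives $\alpha_v = q$; for $\deg_G(v) = 2$ with neighbors $u_1, u_2$, the diameter bound $|\psi(u_1) - \psi(u_2)| \leq \diam(\psi) \leq d$ together with $LSR(d, \eta)$ for $\eta := \epsilon p/(2e|D_2|)$ yields $\alpha_v = p + \delta_v$ with $|\delta_v| < \eta$. If $\phi$ were a uniform (not necessarily injective) function then the events $\phi(v) \in A_v$ would be independent, giving
\[
\exv_{\phi: V \to \F_2^n}\prod_v [\phi(v) \in A_v] = \prod_v \alpha_v = p^{r_2} q^{r_1} \prod_{v \in D_2}\left(1 + \frac{\delta_v}{p}\right).
\]
Applying $|\delta_v|/p \leq \epsilon/(2e|D_2|)$ and the estimate $(1 + x)^r - 1 \leq r x e^{rx}$, the product over $D_2$ deviates from $1$ by at most $(\epsilon/(2e)) e^{\epsilon/(2e)} \leq \epsilon/2$, so the uniform-$\phi$ expectation already lies within $(\epsilon/2) p^{r_2} q^{r_1}$ of the target $p^{r_2} q^{r_1}$.

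Finally, I pass from the uniform to the injective expectation via inclusion-exclusion,
\[
\bhom_\psi(G, BC(f)) \cdot (2^n)_m = \prod_v |A_v| - \sum_{i < j}|A_{v_i} \cap A_{v_j}|\!\!\prod_{k \neq i, j}|A_{v_k}| + \cdots,
\]
combined with the expansion $(2^n)_m/2^{nm} = 1 - \binom{m}{2}/2^n + O(m^3/2^{2n})$. The hypothesis $m \leq 2^{n/2}$ forces $\binom{m}{2}/2^n \leq 1/2$, and each pairwise overlap obeys $|A_{v_i} \cap A_{v_j}| \leq (p + \eta) 2^n$; absorbing these corrections into the remaining $(\epsilon/2)$ budget then yields $|\bhom_\psi(G, BC(f)) - p^{r_2} q^{r_1}| < \epsilon p^{r_2} q^{r_1}$. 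The main obstacle is precisely this last step, since $p^{r_2} q^{r_1}$ can be exponentially small in $m$ while the naive collision correction $\binom{m}{2}/2^n$ is only bounded by $1/2$ under $m \leq 2^{n/2}$; closing the gap requires showing the overlaps $|A_{v_i} \cap A_{v_j}|$ are of ``random-like'' size $\approx p^2 \cdot 2^n$ (rather than the worst-case $\approx p \cdot 2^n$), which in turn uses $LSR$ on all pairs of prescribed points in $\im(\psi)$ (the diameter condition ensuring all such pairs lie within distance $d$) together with the degree-$\leq 2$ constraint on $V$.
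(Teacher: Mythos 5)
Your decomposition into densities $\alpha_v$ with $\alpha_v = q$ for degree-$1$ vertices and $\alpha_v = p + \delta_v$, $|\delta_v| < \eta$ for degree-$2$ vertices is exactly the right local structure, and the uniform-$\phi$ computation $\prod_v\alpha_v = p^{r_2}q^{r_1}\prod_{v\in D_2}(1+\delta_v/p)$ is the same arithmetic that underlies the paper's argument. The gap is real, and it is exactly where you flagged it: you try to correct for injectivity at the \emph{top} level, after the full product $\prod_v\alpha_v$ (of order $p^{r_2}q^{r_1}$) has been formed. Any correction for non-injective $\phi$ — whether by the ratio $(2^n)_m/2^{nm}$ or by inclusion-exclusion over collisions — is only controlled up to a constant factor $e$ under $m\le 2^{n/2}$, and a constant multiplicative error on a quantity of size $p^{r_2}q^{r_1}$ blows the entire budget $\epsilon\,p^{r_2}q^{r_1}$. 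Your proposed repair, estimating $|A_{v_i}\cap A_{v_j}|\approx p^2 2^n$, would require controlling codegrees of up to four prescribed vertices of $BC(f)$, which is a stronger condition than $LSR$ (which only controls two prescribed neighborhoods); and even then the inclusion-exclusion has $2^m$ terms with no obvious convergence, so this route does not close.

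The paper sidesteps this by \emph{telescoping before handling injectivity}. Writing $\prod_r(\cdots) = \prod_r\big((\cdots - p_r)+p_r\big)$ and expanding, the deviation $\bhom_\psi - p^{r_2}q^{r_1}$ becomes a sum over nonempty $S\subseteq R(G)$ of terms $\big(\prod_{r\notin S}p_r\big)X_S$, where $X_S$ involves only the residuals $(\cdots)-p_r$ for $r\in S$. Degree-$1$ residuals vanish \emph{exactly} (your $\alpha_v=q$ observation, used at the per-term level), so only $S\subseteq D_2$ survive, and for those $X_S$ is bounded by $\eta^{|S|}$ times the injectivity ratio $2^{ns}/(2^n)_s\le e$. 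Because $X_S$ is already exponentially small in $|S|$, a one-time factor of $e$ is harmless, and summing the geometric tail gives the claim. In short: the paper pays the injectivity cost of $e$ on each already-small telescope term, whereas you pay it on the full product; the order of operations is what makes the budget balance.
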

	\begin{proof}
		 Fix a bipartite graph $G$ with $l$ vertices in its left part $L(G)$, $r$ vertices in its right part $R(G)$, and maximum degree $2$ in $R(G)$. Let $D_1,D_2$ denote sets of vertices in $R(G)$ of degree $1$ and $2$ respectively. Finally, for a vertex $r$ in $R(G)$, define $p_r := \begin{cases}
		q & \deg(r) = 1\\
		p & \deg(r) = 2
		\end{cases}$ where $p = \frac{1}{4} - \frac{\fourier{f}{0}}{2}$ and $q = \frac{1 - \fourier{f}{0}}{2}$.
		
		Suppose that $f:\F_2^n\to \spm$ satisfies the Local Strong Regularity Property $LSR(d,\frac{\epsilon p}{2e\abs{D_2}})$.  We begin by expanding the Degree-$2$ Homomorphisms Property (Property \ref{property local embeddings})
		\begin{align*}
		\abs{\bhom_\psi(G,BC(f)) - p^{\abs{D_2}}q^{\abs{D_1}}} &= \abs{\exv_{\phi:R(G)\hookrightarrow \F_2^n} \prod_{(l,r)\in E(G)} [\psi(l) \sim \phi(r)] - p^{\abs{D_2}}q^{\abs{D_1}}}\\
		&= 	\abs{\exv_{\phi:R(G)\hookrightarrow \F_2^n} \prod_{r\in R(G)} \prod_{l\in N_G(r)}  [\psi(l) \sim \phi(r)] - p^{\abs{D_2}}q^{\abs{D_1}}}
		\end{align*} where we use the fact that $G$ is bipartite in the second line.  We can now add $p_r - p_r$ to each term in the product and then telescope as follows:
		\begin{align*}
			\abs{\bhom_\psi(G,BC(f)) - p^{\abs{D_2}}q^{\abs{D_1}}} &= \abs{\exv_{\phi:R(G)\hookrightarrow \F_2^n} \left(\prod_{r\in R(G)} \left(\left(\prod_{l\in N_G(r)}  [\psi(l) \sim \phi(r)]\right) - p_r\right) + p_r\right) - p^{\abs{D_2}}q^{\abs{D_1}}}\\
			&= \abs{\sum_{\emptyset \neq S\subseteq R(G)} \exv_{\phi:R(G)\hookrightarrow \F_2^n}  \left(\prod_{r\in R(G)\setminus S}  p_r\right)\left(\prod_{r\in S} \left(\left(\prod_{l\in N_G(r)}  [\psi(l) \sim \phi(r)]\right) - p_r\right)\right)}\\
			&\leq \sum_{\emptyset \neq S\subseteq R(G)} \left(\prod_{r\in R(G)\setminus S}  p_r\right)\abs{\exv_{\phi:R(G)\hookrightarrow \F_2^n} \prod_{r\in S} \left(\left(\prod_{l\in N_G(r)}  [\psi(l) \sim \phi(r)]\right) - p_r\right)}\\
		\end{align*} We now focus on an individual set $S\subseteq R(G)$ and its contribution to the sum:
		\[
			X_S := \abs{\exv_{\phi:R(G)\hookrightarrow \F_2^n} \prod_{r\in S} \left(\left(\prod_{l\in N_G(r)}  [\psi(l) \sim \phi(r)]\right) - p_r\right)}
		\] We would like to exchange the expectation and the product over elements of $S$. However, the expectation is over all injections not all functions. Thus we proceed as follows.
		\begin{align*}
			X_S &= \frac{1}{(2^n)_s}\abs{\sum_{\phi:R(G)\hookrightarrow \F_2^n} \prod_{r\in S} \left(\left(\prod_{l\in N_G(r)}  [\psi(l) \sim \phi(r)]\right) - p_r\right)}\\
			&\leq \frac{1}{(2^n)_s}\abs{\sum_{\phi:R(G)\to \F_2^n} \prod_{r\in S} \left(\left(\prod_{l\in N_G(r)}  [\psi(l) \sim \phi(r)]\right) - p_r\right)}\\
			&= \frac{2^{ns}}{(2^n)_s}\prod_{i = 1}^{s} \abs{\exv_{\phi(r_i)\in \F_2^n}  \left(\left(\prod_{l\in N_G(r_i)}  [\psi(l) \sim \phi(r_i)]\right) - p_r\right)}
		\end{align*}  If $\deg(r_i) = 1$, then
		\begin{align*}
				\exv_{\phi(r_i)\in \F_2^n}  \left(\left(\prod_{l\in N_G(r_i)}  [\psi(l) \sim \phi(r_i)]\right) - p_r\right) &= \exv_{\phi(r_i)\in \F_2^n}    [\psi(l) \sim \phi(r_i)] - q\\
				&= \exv_{\phi(r_i)\in \F_2^n} \dfrac{1 - f(\psi(l) + \phi(r))}{2} - \frac{1  -\fourier{f}{0}}{2}\\
				&= 0
		\end{align*} Thus, $X_S$ is $0$ if $S$ contains any vertices of degree $1$. Assume then that every $r_i\in S$ has exactly $2$ neighbors. Since the neighbors of $\phi(r_i)$ are fixed by $\psi$, we may apply  $LSR(d,\frac{\epsilon p }{2e\abs{D_2}})$ to conclude that
		\[
			\abs{\exv_{\phi(r_i)\in \F_2^n}  \left(\left(\prod_{l\in N_G(r_i)}  [\psi(l) \sim \phi(r_i)]\right) - p_{r_i}\right)} < \frac{\epsilon p}{2e\abs{D_2}}.
		\]  Thus, if $S$ only contains vertices of degree $2$,
		\begin{align*}
			X_S &\leq \frac{2^{ns}}{(2^n)_s} \left(\frac{\epsilon p}{2e\abs{D_2}}\right)^{\abs{S}}
		\end{align*} We then wish to bound the first term in the product:
		\begin{align*}
			 \frac{2^{ns}}{(2^n)_s} &= \prod_{i = 0}^{s - 1} \frac{1}{1 - \frac{i}{2^n}}\\
			 &\leq \left(1 - \frac{s}{2^n}\right)^{-s}\\
			 &\leq \exp\left(\frac{s^2}{2^n}\right)\\
			 &\leq e
		\end{align*} as $\abs{S} \leq \abs{R} \leq  2^{n/2}$. Therefore,
		\begin{align}
				\abs{\bhom_\psi(G,BC(f)) - p^{\abs{D_2}}q^{\abs{D_1}}} &\leq \sum_{\emptyset \neq S\subseteq R(G)} \left(\prod_{r\in R(G)\setminus S}  p_r\right) X_S \nonumber\\
				&\leq e\sum_{\emptyset \neq S\subseteq R(G)} \left(\prod_{r\in R(G)\setminus S}  p_r\right)[ S \cap D_1 = \emptyset]\left(\frac{\epsilon p}{2e\abs{D_2}}\right)^{\abs{S}} \nonumber\\
				&= ep^{\abs{D_2}}q^{\abs{D_1}} \sum_{\emptyset \subsetneq S \subseteq D_2} \left(\frac{\epsilon}{2e\abs{D_2}}\right)^{\abs{S}} \nonumber\\
				&= ep^{\abs{D_2}}q^{\abs{D_1}} \left(\left(1 + \frac{\epsilon}{2e\abs{D_2}}\right)^{\abs{D_2}} - 1\right)\nonumber \\
				&\leq ep^{\abs{D_2}}q^{\abs{D_1}} \left(e^{\frac{\epsilon}{2e}} - 1\right)\label{eqn use bernoulli}\\
				&\leq p^{\abs{D_2}}q^{\abs{D_1}}\epsilon \label{eqn use reverse bernoulli}
		\end{align}
		where we use $1 + x \leq e^x$ in line (\ref{eqn use bernoulli}) and $e^{-x/2} \leq 1 - x$ for $x \leq \half$ in line (\ref{eqn use reverse bernoulli}).	Hence, $f$ also satisfies the Degree Two Homomorphisms Property $DTH(d,\epsilon)$. 	 
	\end{proof}

	To prove our next theorem, we need a few additional definitions. The \textbf{subdivision} of a graph $G$, denoted $\Subdiv(G)$, is the bipartite graph $(A\sqcup B,E)$ where $A = V(G)$, $B = \{r_{(u,v)}: (u,v)\in E(G)\}$, and $E = E_1\sqcup E_2$ where $E_1 = \{(u,r_{(u,v)}): (u,v)\in E(G)\}$,$E_2 = \{(v,r_{(u,v)}): (u,v)\in E(G)\}$. 
	
	For two graphs $H$ and $G$, let $H \leq G$ denote the relation that $H$ is a subgraph of $G$. For a subgraph $H\leq \Subdiv(G)$, let $D_2(H) \subseteq R(H)$ denote the set of vertices in $R(\Subdiv(G))$ of degree $2$ in $G$. Similarly, let $D_A(H)\subseteq R(H)$ and $D_B(H)\subseteq R(H)$ denote the sets of vertices of degree $1$ in $H$ whose incident edge is in $E_1$ and $E_2$ respectively. 

	We will need the following technical Lemma. 
	\begin{lem}\label{lem subgraph expansion}
		 Then,
		\[
			\sum_{\substack{H\\ H\leq \Subdiv(G)}} x^{\abs{D_2(H)}}y^{\abs{D_A(H)}}z^{\abs{D_B(h)}} = (1 + x + y + z)^{\abs{R(\Subdiv(G))}}
		\]
	\end{lem}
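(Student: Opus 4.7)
The plan is to prove the identity by identifying each subgraph $H \leq \Subdiv(G)$ with an independent four-way choice at each right vertex of $\Subdiv(G)$, and then factoring the generating function accordingly.

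First, I would record two structural facts coming directly from the construction of $\Subdiv(G)$: every right vertex $r_{(u,v)} \in R(\Subdiv(G))$ is incident to exactly two edges of $\Subdiv(G)$, namely $(u,r_{(u,v)}) \in E_1$ and $(v,r_{(u,v)}) \in E_2$; and conversely every edge of $\Subdiv(G)$ is incident to exactly one right vertex. Consequently, a spanning subgraph $H \leq \Subdiv(G)$ is in bijection with the data of a four-valued choice at each $r \in R(\Subdiv(G))$: include neither incident edge, the $E_1$-edge only, the $E_2$-edge only, or both.

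Next, I would match the weight $x^{\abs{D_2(H)}} y^{\abs{D_A(H)}} z^{\abs{D_B(H)}}$ to these local choices. The \emph{both} state corresponds to $r \in D_2(H)$ and contributes $x$; including only the $E_1$-edge puts $r \in D_A(H)$ and contributes $y$; including only the $E_2$-edge puts $r \in D_B(H)$ and contributes $z$; and including neither edge leaves $r$ out of all three sets and contributes $1$. Hence the weight factors as a product of local weights indexed by right vertices, and the sum factors as $\prod_{r \in R(\Subdiv(G))} (1+x+y+z) = (1+x+y+z)^{\abs{R(\Subdiv(G))}}$.

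The only real obstacle is pinning down the reading of ``subgraph'' so that the \emph{neither edge} option at a right vertex contributes a factor of $1$ rather than being discarded. This is automatic if one takes $H \leq \Subdiv(G)$ to mean a spanning subgraph, i.e.\ $V(H) = V(\Subdiv(G))$ together with an arbitrary subset of the edges. Once that convention is fixed, the identity is a one-line factorization and no further calculation is required.
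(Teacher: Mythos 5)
Your proof is correct and uses the same underlying bijection as the paper's: each subgraph $H \leq \Subdiv(G)$ corresponds to an independent four-way choice (neither incident edge, the $E_1$-edge only, the $E_2$-edge only, or both) at each right vertex, so the sum factors over $R(\Subdiv(G))$. The paper reaches this by expanding $(1+x+y+z)^{\abs{R(\Subdiv(G))}}$ via the multinomial theorem and exhibiting the bijection from partitions of $R(\Subdiv(G))$ to subgraphs, while you factor the left-hand side directly and helpfully make explicit the ``spanning subgraph'' (edge-subset) convention that both arguments tacitly require.
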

	\begin{proof}
		We expand the RHS by the multinomial theorem as
		\[
			(1 + x + y + z)^{\abs{R(G')}} = \sum_{\substack{S,T,U,V\subseteq R(G')\\ S\sqcup T\sqcup U \sqcup V = E(G)}} x^{\abs{S}}y^{\abs{T}}z^{\abs{U}}.
		\] We claim that each term in the sum defines a unique subgraph $H$ of $G'$. Indeed, for a fixed partition of $R(G')$ into sets $S,T,U,V$, let $S$ be the set of vertices in $R(H)$ of degree $2$, let $T$ to be the set of vertices in $R(H)$ of degree $1$ incident to $A$,  let $U$ to be the set of vertices in $R(H)$ of degree $1$ incident to $B$, and let $V$ to all remaining vertices. This mapping is bijective, and thus, 
		\[
			\sum_{\substack{S,T,U,V\subseteq R(G')\\ S\sqcup T\sqcup U \sqcup V = E(G)}} x^{\abs{S}}y^{\abs{T}}z^{\abs{U}} = \sum_{H\leq G'} x^{\abs{D_2(H)}}y^{\abs{D_A(H)}}z^{\abs{D_B(H)}}.
		\] 
	\end{proof}

	\begin{thm}\label{thm degree two embeddings to rainbow embeddings}
		For any fixed $d\geq 1$ and $\epsilon > 0$, the Degree-$2$ Homomorphisms Property $DTH\left(d,\epsilon\left(\frac{5}{2} + \fourier{f}{0}\right)^{-\abs{E(G)}}\right)$ implies the Rainbow Embeddings Property $RAIN(d,\epsilon)$.
	\end{thm}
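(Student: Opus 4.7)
The plan is to rewrite the product defining $\emb_\phi(G, RHG(d,f))$ using the identity $[f(x)=f(y)] = 1 - A - B + 2AB$ with $A=[f(x)=-1]$, $B=[f(y)=-1]$, expand the product into a signed sum indexed by subgraphs of $\Subdiv(G)$, and apply $DTH$ to each resulting bipartite-homomorphism count.

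For each edge $e = (u,v) \in E(G)$, set $A_e := [\phi(u) \sim \chi(e)]$ and $B_e := [\phi(v) \sim \chi(e)]$, where adjacency is taken in $BC(f)$. Then $[f(\phi(u) + \chi(e)) = f(\phi(v) + \chi(e))] = 1 - A_e - B_e + 2A_eB_e$. Distributing the product over $e \in E(G)$ produces a sum in which each edge contributes one of the four terms $\{1, -A_e, -B_e, 2A_eB_e\}$; these four choices correspond bijectively to the vertex $r_e \in R(\Subdiv(G))$ having degree $0$, lying in $D_A(H)$, lying in $D_B(H)$, or lying in $D_2(H)$, respectively, for some subgraph $H \leq \Subdiv(G)$. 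Thus
\[
\emb_\phi(G, RHG(d,f)) = \sum_{H \leq \Subdiv(G)} (-1)^{|D_A(H)| + |D_B(H)|}\, 2^{|D_2(H)|}\, \exv_\chi \!\prod_{r_e \in D_A(H)}\! A_e \!\prod_{r_e \in D_B(H)}\! B_e \!\prod_{r_e \in D_2(H)}\! A_eB_e.
\]

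Let $H^*$ denote $H$ with its isolated right vertices removed. The inner expectation depends only on the values of $\chi$ on those edges indexing $R(H^*)$, and since the marginal of a uniform random injection $E(G) \hookrightarrow \F_2^n$ on any fixed subset is itself a uniform random injection on that subset, the inner expectation equals $\bhom_\phi(H^*, BC(f))$. Because $H^*$ has maximum right-degree $2$, its right part has size at most $|E(G)|$ (a fixed constant, so $\leq 2^{n/2}$ for $n$ large), and $\phi$ has diameter at most $d$, the hypothesis $DTH(d, \epsilon')$ with $\epsilon' = \epsilon(5/2 + \fourier{f}{0})^{-|E(G)|}$ applies and gives $|\bhom_\phi(H^*, BC(f)) - p^{|D_2(H)|}q^{|D_A(H)|+|D_B(H)|}| < \epsilon' p^{|D_2(H)|}q^{|D_A(H)|+|D_B(H)|}$.

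Substituting this estimate into the expansion, the main term is $\sum_H (2p)^{|D_2|}(-q)^{|D_A|}(-q)^{|D_B|}$, which by Lemma \ref{lem subgraph expansion} (applied with $x = 2p$, $y = z = -q$) equals $(1 + 2p - 2q)^{|E(G)|}$; a direct computation shows $1 + 2p - 2q = \tfrac12$, yielding main term $2^{-|E(G)|}$, exactly the target. The error term is bounded in absolute value by $\epsilon' \sum_H (2p)^{|D_2|} q^{|D_A|+|D_B|} = \epsilon'(1 + 2p + 2q)^{|E(G)|}$, again by Lemma \ref{lem subgraph expansion}, and a direct computation bounds $1 + 2p + 2q$ by an absolute constant (depending only on $\fourier{f}{0}$), so the chosen $\epsilon'$ yields total error at most $\epsilon$. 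The main obstacle is the marginal-distribution step identifying the inner expectation with $\bhom_\phi(H^*, BC(f))$ and then tracking the precise constant in the geometric factor on $|E(G)|$, where care is required because the expansion mixes signs from $(-1)^{|D_A|+|D_B|}$ with the positive weights $(2p)^{|D_2|} q^{|D_A|+|D_B|}$ so that the main term collapses to $\tfrac12^{|E(G)|}$ but the error term does not.
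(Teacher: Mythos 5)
Your proof takes essentially the same route as the paper's: you use the identity $[f(x)=f(y)] = 1 - A - B + 2AB$, lift the product to $\Subdiv(G)$, expand via Lemma~\ref{lem subgraph expansion}, and split into a main term equal to $2^{-|E(G)|}$ plus an error term controlled by $DTH$. You handle one point more carefully than the paper does: the step of restricting the uniform random injection $\chi\colon E(G)\hookrightarrow \F_2^n$ to $R(H^*)$ and identifying the resulting marginal with the expectation defining $\bhom_{\phi}(H^*,BC(f))$, which the paper glosses over. The rest of your computation, including $1 + 2p - 2q = \tfrac12$ for the main term, is correct.

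One caution, shared between your write-up and the paper's: the error term is bounded by $\epsilon'(1 + 2p + 2q)^{|E(G)|}$ with $1 + 2p + 2q = \tfrac{5}{2} - 2\fourier{f}{0}$, which exceeds the $\tfrac{5}{2} + \fourier{f}{0}$ appearing in the hypothesized $\epsilon'$ whenever $\fourier{f}{0} < 0$. So the final inequality ``total error $\leq \epsilon$'' requires an extra argument (or a corrected constant). A safe replacement would be $\epsilon' = \epsilon\left(\tfrac{5}{2} + 2\abs{\fourier{f}{0}}\right)^{-|E(G)|}$, or, using $\abs{\fourier{f}{0}} < \tfrac12$, the dimension-free bound $\epsilon' = \epsilon\left(\tfrac{7}{2}\right)^{-|E(G)|}$. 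This is not a flaw in your approach but a detail worth noting rather than quoting ``a direct computation'' without verifying it.
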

	\begin{proof}
		Let $G$ be a fixed graph. Suppose $f;\F_2^n\to \spm$ satisfies the Degree-$2$ Homomorphisms Property $DTH\left(d,\epsilon\left(\frac{5}{2} + \fourier{f}{0}\right)^{-\abs{E(G)}}\right)$. Let $\phi:V(G) \hookrightarrow B_d(0)$ be an injection of diameter at most $d$. Recalling the definition of a rainbow embedding, we have
		\begin{align}
			\emb_{\phi}(G,RHG(d,f)) &= \exv_{\chi:E(G)\hookrightarrow \F_2^n} \prod_{(u,v)\in E(G)} [(u,v,\chi((u,v))) \in E(RHG(d,f))]\nonumber\\
			&= \exv_{\chi:E(G)\hookrightarrow \F_2^n} \prod_{(u,v)\in E(G)} [f(\phi(u) + \chi((u,v))) = f(\phi(v) + \chi((u,v))) ]\nonumber\\
		\intertext{Let $U^{+}(u,v)$ denote the event that $f(\phi(u) + \chi((u,v))) = 1$ and  $U^{-}(u,v)$ denote the event that $f(\phi(u) + \chi((u,v))) = -1$. Define $V^+(u,v)$ and $V^-(u,v)$ likewise.}	
				\emb_{\phi}(G,RHG(d,f)) &= \exv_{\chi:E(G)\hookrightarrow \F_2^n} \prod_{(u,v)\in E(G)} \bigg([U^+(u,v)][V^+(u,v)] + [U^-(u,v)][V^-(u,v)]\bigg)\nonumber\\
			&= \exv_{\chi:E(G)\hookrightarrow \F_2^n} \prod_{(u,v)\in E(G)} \bigg([U^-(u,v)][V^-(u,v)] + (1 - [U^-(u,v)])(1 - [V^-(u,v)])\bigg) \nonumber\\
			&= \exv_{\chi:E(G)\hookrightarrow \F_2^n} \prod_{(u,v)\in E(G)} \bigg(1 - [U^-(u,v)] - [V^-(u,v)] + 2[U^-(u,v)][V^-(u,v)]\bigg)\label{eqn prior to lifting}
		\end{align}
		We wish to lift the expression in line (\ref{eqn prior to lifting}) to an equivalent expression in terms of a bipartite graph homomorphism of $\Subdiv(G)$ into $BC(f)$. Observe that $\phi$ is injective and its image has diameter at most $k$, so $\phi$ is a valid choice of the map for the left part of a bipartite graph homomorphism of $\Subdiv(G)$. Similarly, $\chi$ defines the right part of a bipartite graph homomorphism. In consequence, the event $U^-(u,v)$, which holds when $f(\phi(u) + \chi((u,v))) = -1$, is precisely the condition that the edge $(u,r_{(u,v)})\in E(\Subdiv(G))$ is sent to an edge $(\phi(u),\chi(u,v))\in E(BC(f))$.  Similar statements hold for events $U^-(u,v),V^+(u,v),V^-(u,v)$.

		Therefore, the above expression (\ref{eqn prior to lifting}) is precisely the following:
		\begin{align*}
			\emb_{\phi}(G,RHG(d,f)) &=  \exv_{\chi:E(G)\hookrightarrow \F_2^n} \prod_{r_{(u,v)}\in R(\Subdiv(G))} \bigg(1 - [U^-(u,v)] - [V^-(u,v)] + 2[U^-(u,v)][V^-(u,v)]\bigg)
		\end{align*} as each edge in $G$ now corresponds to a unique right vertex in $\Subdiv(G)$. By Lemma \ref{lem subgraph expansion}, we can expand the product as follows: 
		\begin{align}
				\emb_{\phi}(G,RHG(d,f))  &=  \exv_{\chi:E(G)\hookrightarrow \F_2^n} \sum_{H: H\leq \Subdiv(G)} 2^{\abs{D_2(H)}}(-1)^{\abs{D_1(H)}}\prod_{(l,r)\in H} [\phi(l) \underset{BC(f)}{\sim} \chi(r)]\nonumber\\
				&= \sum_{H: H\leq \Subdiv(G)} 2^{\abs{D_2(H)}}(-1)^{\abs{D_1(H)}}\exv_{\chi:E(G)\hookrightarrow \F_2^n} \prod_{(l,r)\in H} [\phi(l) \underset{BC(f)}{\sim} \chi(r)]\label{eqn embeddings to sum}
		\end{align} 
		Define 
		\[
			X = \abs{ \sum_{H: H\leq \Subdiv(G)} 2^{\abs{D_2(H)}}(-1)^{\abs{D_1(H)}}\left(\left(\exv_{\chi:E(G)\hookrightarrow \F_2^n} \prod_{(l,r)\in H} [\phi(l) \underset{BC(f)}{\sim} \chi(r)]\right) - p^{\abs{D_2(H)}}q^{\abs{D_1(H)}}\right)}
		\]
		Now will simplify $X$ in two ways. By Equation (\ref{eqn embeddings to sum}), we have
		\begin{align*}
			X  &=  \abs{\exv_{\chi:E(G)\hookrightarrow \F_2^n} \sum_{H: H\leq \Subdiv(G)} 2^{\abs{D_2(H)}}(-1)^{\abs{D_1(H)}}\left(\left(\prod_{(l,r)\in H} [\phi(l) \underset{BC(f)}{\sim} \chi(r)]\right) - p^{\abs{D_2(H)}}q^{\abs{D_1(H)}}\right)}\\
			&= \abs{\emb_{\phi}(G,RHG(d,f)) -  \sum_{H: H\leq \Subdiv(G)}  (2p)^{\abs{D_2(H)}}(-q)^{\abs{D_1(H)}}}\\
			&= \abs{\emb_{\phi}(G,RHG(d,f)) -  \sum_{H: H\leq \Subdiv(G)}  \left(\half - \fourier{f}{0}\right)^{\abs{D_2(H)}}\left(-\half + \frac{\fourier{f}{0}}{2}\right)^{\abs{D_1(H)}}}\\
		\end{align*}
		We are now in a situation to apply Lemma \ref{lem subgraph expansion} with $x = \half - \fourier{f}{0}$ and $y = z = -\half + \frac{\fourier{f}{0}}{2}$.
		\begin{align*}	
			&= 	\abs{\emb_{\phi}(G,RHG(d,f)) - \prod_{r_{(u,v)}\in R(\Subdiv(G))} \left(1 + 2(-\half + \frac{\fourier{f}{0}}{2}) + \half - \fourier{f}{0}\right)}\\
			&= 	\abs{\emb_{\phi}(G,RHG(d,f)) - 2^{-\abs{E(G)}}}
		\end{align*} where we use the fact that $\abs{R(\Subdiv(G))} = \abs{E(G)}$. On the other hand, the triangle inequality gives us that
		\begin{align}
			X &\leq  \sum_{H: H\leq \Subdiv(G)} 2^{\abs{D_2(H)}}\abs{\left(\left(\exv_{\chi:E(G)\hookrightarrow \F_2^n} \prod_{(l,r)\in H} [\phi(l) \underset{BC(f)}{\sim} \chi(r)]\right) - p^{\abs{D_2(H)}}q^{\abs{D_1(H)}}\right)}\nonumber\\
			&\leq \epsilon(5/2 + \fourier{f}{0})^{-\abs{E(G)}}\sum_{H: H\leq \Subdiv(G)} (2p)^{\abs{D_2(H)}}q^{\abs{D_1(H)}}\label{eqn use degree 2 homs}\\
			&= \epsilon(5/2 + \fourier{f}{0})^{-\abs{E(G)}} \left(1 + 2q + 2p\right)^{\abs{E(G)}} \label{eqn subgraph sum to product}\\
			&\leq \epsilon \nonumber
		\end{align} where we use the fact that $f$ satisfies the Degree-$2$-Homomorphisms Property of rank $d$ with error bound $\epsilon\left(\frac{5}{2} + \fourier{f}{0}\right)^{-\abs{E(G)}}$ in line (\ref{eqn use degree 2 homs}) and Lemma \ref{lem subgraph expansion} in line (\ref{eqn subgraph sum to product}). Putting these two expansions of $X$ together, we find that
		\[
			\abs{\emb_{\phi}(G,RHG(d,f)) - 2^{-\abs{E(G)}}} < \epsilon 
		\] Thus $f$ also satisfies the Rainbow Embeddings Property $RAIN(d,\epsilon)$.
	\end{proof}

	\begin{thm}\label{thm rainbow embeddings to influences}
		For any fixed $d \geq 1$ and $\epsilon > 0$, the Rainbow Embeddings Property $RAIN(d,\epsilon)$ implies the Balanced Influences Property $INF(d,\epsilon)$.
	\end{thm}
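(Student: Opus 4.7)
The plan is to apply the Rainbow Embeddings Property to the smallest nontrivial graph, namely a single edge, and read the influence directly off the result. Fix a nonzero $\gamma \in \F_2^n$ with $\abs{\gamma} \leq d$, and let $G = K_2$ be the graph consisting of a single edge $e = (u,v)$. Define $\phi: V(G) \hookrightarrow B_d(n,0)$ by $\phi(u) = 0$ and $\phi(v) = \gamma$. Then $\phi$ is injective (since $\gamma \neq 0$), both images lie in $B_d(n,0)$, and the image has diameter $\abs{\gamma} \leq d$, so $\phi$ is a valid choice of injection for $RAIN(d,\epsilon)$.

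With only one edge, any coloring $\chi: E(G) \hookrightarrow \F_2^n$ is automatically injective, so the expectation in the definition of $\emb_\phi$ collapses to an average over a single color $c \in \F_2^n$. Unpacking the definition of the rainbow Hamming graph (Definition \ref{defn rainbow Hamming graph}) gives
\[
    \emb_\phi(G, RHG(d,f)) = \exv_{c \in \F_2^n} \bigl[f(0 + c) = f(\gamma + c)\bigr] = 1 - \Inf_\gamma[f],
\]
using Definition \ref{defn directional influence} in the second equality.

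Since $\abs{E(G)} = 1$, the Rainbow Embeddings Property $RAIN(d,\epsilon)$ yields $\abs{\emb_\phi(G,RHG(d,f)) - 2^{-1}} < \epsilon$, which upon substituting the identity above becomes exactly $\abs{\half - \Inf_\gamma[f]} < \epsilon$. As $\gamma$ was an arbitrary nonzero vector of Hamming weight at most $d$, this is precisely the Balanced Influences Property $INF(d,\epsilon)$.

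There is no real obstacle in this argument; the content of the proof is choosing the right graph and map so that the rainbow embedding count reduces to the autocorrelation $\exv_c f(c)f(c+\gamma)$ (equivalently, by Proposition \ref{lem influence is autocorrellation is average value of derivative}, to $1 - 2\Inf_\gamma[f]$ in signed form, or to $1 - \Inf_\gamma[f]$ in the $\{0,1\}$-indicator form used above). The only mild subtlety is checking that the diameter constraint on $\phi$ translates exactly into the Hamming-weight constraint on $\gamma$, which it does because the diameter of $\{0,\gamma\}$ in the Hamming metric equals $\abs{\gamma}$.
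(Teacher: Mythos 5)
Your proof is correct and takes essentially the same route as the paper: apply $RAIN(d,\epsilon)$ to $G = K_2$ with the injection sending the two vertices to $0$ and $\gamma$, unpack $\emb_\phi(K_2,RHG(d,f))$ to $1-\Inf_\gamma[f]$, and read off the bound. The paper phrases it with a generic $u\in B_d(n,0)$ rather than a $\gamma$ of weight $\leq d$, but these are the same thing, and the argument is otherwise identical.
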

	\begin{proof}
		Suppose $f:\F_2^n\to \spm$ satisfies the Rainbow Embeddings Property $RAIN(d,\epsilon)$. Fix $u\in B_d(n,0)$, and let $\phi$ be an injection from $V(K_2)$ to $\{u,0\}$. By definition of rainbow embeddings, we have
		\begin{align*}
			\emb_{\phi}(K_2,RHG(d,f))  &= \exv_{\chi: E(K_2) \to \F_2^n} [(u,0,\chi(e)) \in E(RHG(d,f))]\\
			\intertext{Setting $x = \chi(e)$ and applying the definition of the edge set of $RHG(f)$}
			 \emb_{\phi}(K_2,RHG(d,f)) &= \exv_{x\in \F_2^n} [f(u + x) = f(x)]\\
			 &= \prob_{x\in\F_2^n}\left[f(x + u) = f(x)\right]\\
			 &= 1 - \Inf_{u}[f]
		\end{align*} By Property $RAIN(d,\epsilon)$, we have that ${\emb_{\phi}(K_2,RHG(d,f)) - \half} < \epsilon$. Hence, it follows that $\abs{\Inf_{u}[f] - \half} < \epsilon$ and $f$ also satisfies the Balanced Influences Property $INF(d,\epsilon)$.
	\end{proof}

	\section{Constructions of Quasirandom Functions}\label{sec constructions}
	In this section, we construct a large class of functions which separate the Balanced Influences Property $INF(d + 1,\epsilon)$ from $INF(d,\epsilon')$.

		A \textit{$[n,k,d]$-binary linear code} is a subspace $\mathcal{C}\subseteq \F_2^n$ of dimension $k$ such that $\min_{x \neq y} \abs{x - y} = d$. An $[n,k,d]$ binary linear code may be specified by its \textit{parity check matrix} $M\in \F_2^{(n - k) \times n}$ which has the property that $x\in \mathcal{C} \iff Mx = 0$. Note that the parity check matrix has rank $n - k$. 
		We will need the following elementary fact regarding linear codes of distance $d$.
		\begin{lem}\label{lem property of linear codes}[\cite{Guruswami2019}, Proposition 2.3.5]
			If $M$ is the parity check matrix of a code with distance strictly greater than $d$, then any nonzero $x\in \ker(M)$ must have $\abs{x} > d$.
		\end{lem}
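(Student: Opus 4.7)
The plan is to unpack the definitions and observe that the statement is essentially immediate from the linearity of the code together with the defining property of the parity check matrix. The only content is translating a statement about pairwise distances between distinct codewords into a statement about the Hamming weight of a single nonzero codeword, which is standard for linear codes.

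First I would note that by the defining property of the parity check matrix, the kernel of $M$ coincides with the code: $x\in \ker(M) \iff x\in \mathcal{C}$. Since $\mathcal{C}$ is a linear subspace of $\F_2^n$, the zero vector $\vec{0}$ is a codeword. Hence for any nonzero $x\in \ker(M)$, both $x$ and $\vec{0}$ are elements of $\mathcal{C}$, and they are distinct.

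Next I would invoke the hypothesis that the minimum distance of the code is strictly greater than $d$, i.e.\ $\min_{y\neq z\in \mathcal{C}} \abs{y - z} \geq d + 1$. Applying this to the pair $y = x$ and $z = \vec{0}$ gives $\abs{x} = \abs{x - \vec{0}} \geq d + 1 > d$, as desired. There is no real obstacle here; the lemma is simply a repackaging of the minimum-distance condition in terms of the weight of a single nonzero vector via the identification $\ker(M) = \mathcal{C}$.
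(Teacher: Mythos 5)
Your proof is correct. The paper does not actually prove this lemma—it cites it directly from Guruswami (Proposition 2.3.5)—so there is no internal proof to compare against, but your argument is exactly the standard one: the parity check matrix identifies $\ker(M)$ with the code $\mathcal{C}$, linearity puts $\vec{0}\in\mathcal{C}$, and then the minimum-distance hypothesis applied to the pair $(x,\vec{0})$ gives $\abs{x}>d$ for any nonzero $x\in\ker(M)$.
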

		\begin{ex}\label{ex Hamming code}
			Let $\mathcal{C}$ be the $[7,4]$-Hamming code with parity check matrix $H$
			\[
			H = \begin{bmatrix}
			0 & 1 & 1 & 1 & 1 & 0 & 0 & 0\\
			1 & 0 & 1 & 1 & 0 & 1 & 0 & 0\\
			1 & 1 & 0 & 1 & 0 & 0 & 1 & 0\\
			1 & 1 & 1 & 0 & 0 & 0 & 0 & 1
			\end{bmatrix}
			\] One can check that no vector of Hamming weight $3$ or less can be an element of the kernel, as every set of $3$ columns has at least one row with an odd number of $1$'s
		\end{ex}
			
		The goal of this section is to demonstrate that a bent function composed with the parity check matrix of a distance $d$ linear code is a quasi-random of rank $d$ with error $0$.

		\begin{proof}[Proof of Theorem (\ref{thm tower theorem})]
			Let $\mathcal{C}$ be an $[n,k,d]$ binary linear code such that $n - k$ is even and $n \geq k + 2$. Let $H\in \F_2^{(n - k) \times n}$ be a parity check matrix for $\mathcal{C}$. Let $g:\F_2^{n - k}\to \spm$ be a bent function, and define$f:\F_2^n\to\spm$ by
			\[
				f(x) := g(Hx).
			\] We claim that 
			\[
			\Inf_{\gamma}[f] = \begin{cases}
			\half & \gamma \notin \ker(H)\\
			0 & \gamma\in \ker(H)
			\end{cases}
			\]
			Indeed, by Lemma (\ref{lem influence is autocorrellation is average value of derivative}), we have
			\begin{align}
				\Inf_{\gamma}[f] &= \half - \half f*f(\gamma) \nonumber\\
				&= \half - \half\exv_{\delta\in \F_2^n} g(H\delta)g(H(\delta + \gamma)) \nonumber\\
				&= \half - \half \frac{2^{\rank(H)}}{2^n}\exv_{\eta\in \Range(H)} 2^{\dim(\ker(H))}g(\eta)g(\eta + H\gamma) \nonumber\\
				&= \half - \half\exv_{\eta\in \Range(H)} g(\eta)g(\eta + H\gamma) \label{eqn rank-nullity}
				\end{align}where we use the Rank-Nullity Theorem in line (\ref{eqn rank-nullity}).
				As the parity check matrix is a surjective linear map from $\F_2^n\to \F_2^{n - k}$, we have
				\begin{align}	
				\Inf_{\gamma}[f] &= \half - \half\exv_{\eta\in \F_2^{n - k}} g(\eta)g(\eta + H\gamma) \nonumber\\
				&= \begin{cases}
				\half & \gamma\notin \ker(H)\\
				0 & \gamma\in \ker(H)
				\end{cases}\label{eqn bent convolution}
			\end{align}
			 where we use Lemma (\ref{lem convolution of bent functions}) in line (\ref{eqn bent convolution}). Now we can apply Lemma (\ref{lem property of linear codes}) to conclude that if $\abs{\gamma} \leq d$, $\gamma\notin \ker(H)$. It follows that $\Inf_{\gamma}[f] = \half$ for every $\gamma\in \F_2^n$ with $0 < \abs{\gamma} \leq d$.

			All that remains is to show that $\abs{\fourier{f}{0}} < \half$. To that end we observe 
			\[
				\fourier{f}{0} = \exv_{x\in \F_2^n} g(Hx) = \frac{2^{\dim(\ker(H))}2^{n - k}}{2^n}\exv_{y\in \F_2^{n - k}} g(y) 
			\] by the same reasoning as in line (\ref{eqn rank-nullity}) above. Since $g$ is bent, it follows that
			\[
				\abs{\fourier{f}{0}} = \abs{\fourier{g}{0}} = 2^{-\frac{n - k}{2}}.	
			\] As $n \geq k + 2$, we conclude that $\abs{\fourier{f}{0}} \leq \half$ and $INF(d,0)$ holds for $f$. 
			
			Similarly, as $\mathcal{C}$ has distance $d + 1$, there is some $\gamma'\in\F_2^n$ with Hamming weight $d + 1$ such that $H\gamma' = 0$. Hence, $\Inf_{\gamma'}[f] = 0$ by equation (\ref{eqn bent convolution}) above. Thus $INF(d + 1,\epsilon)$ cannot hold for $f$ unless $\epsilon \geq \half$. 
		\end{proof}
		\begin{rmk}\label{ex IP and Hamming}
			For $r$ even, let $\mathcal{C}$ be the Hamming code $[2^r,2^r - r - 1,3]$, and let $H\in \F_2^{r\times 2^r - 1}$ be its parity check matrix as in Example \ref{ex Hamming code}. Let $IP:\F_2^r\to \spm$ be the inner product function defined in Example (\ref{ex inner product fn}) and define $f(x) := IP(Hx)$. Then, $f$ has the property that $f*f(x) = \begin{cases}
			-1 & x\in \mathcal{C}\\
			1 & x\notin \mathcal{C}
			\end{cases}$  
		\end{rmk}
	\section{Proofs of relations among extant Quasirandomness Theories for Boolean Functions}\label{sec comparisons}
		 We will prove a series of lemmas which together separate and relate the classes of quasirandom boolean functions defined in section \secref{sec previous theories}. 
		\begin{lem}\label{lem balanced influences s to F2 degree 1 regular}
			If $f:\F_2^n\to \spm$ has the Balanced Influences property $INF(d,\epsilon/2)$, then $f$ is $(\sqrt{2^{-d} + \epsilon},n)$ $\R$-Regular. 
		\end{lem}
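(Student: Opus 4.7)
The plan is to bootstrap from the already-established implication $INF(d,\epsilon/2) \Rightarrow SD(d,\epsilon)$ (Theorem \ref{lem influences to spectral sample}), and then extract a pointwise bound on each $\fourier{f}{\gamma}^2$ by covering $\gamma$ with a low-codimension subcube on which we already control the spectral mass. Since the bound $\sqrt{2^{-d}+\epsilon}$ is required for \emph{every} $\gamma\in\F_2^n$ (the parameter $k=n$ in Property \ref{property low degree fourier}), we must produce a uniform bound, and a single subcube of dimension $n-d$ is enough per frequency.

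First I would invoke Theorem \ref{lem influences to spectral sample} to conclude that $f$ satisfies $SD(d,\epsilon)$. Then, fixing an arbitrary $\gamma \in \F_2^n$, I would choose any $S \subseteq [n]$ with $|S| = n - d$ and set $z_0 = \gamma|_{\overline{S}} \in \F_2^{\overline{S}}$; by construction, $\gamma \in C(S,z_0)$ and $\dim C(S,z_0) = n - d$. The spectral sample $\mathcal{S}_f$ is a probability distribution whose mass at any point $\delta$ equals $\fourier{f}{\delta}^2$, so
\[
\fourier{f}{\gamma}^2 \;\leq\; \prob_{\eta \sim \mathcal{S}_f}\bigl[\eta \in C(S,z_0)\bigr].
\]
Applying $SD(d,\epsilon)$ to $C(S,z_0)$ yields the upper bound $2^{-d} + \epsilon$, and taking square roots gives $|\fourier{f}{\gamma}| < \sqrt{2^{-d}+\epsilon}$, which is exactly the $(\sqrt{2^{-d}+\epsilon},n)$ $\R$-Regularity condition since the inequality holds for all $\gamma$.

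There is no real obstacle here: the entire content lies in combining Theorem \ref{lem influences to spectral sample} with the trivial monotonicity of the spectral sample (the mass at a single point is dominated by the mass on any containing set). The only mild subtlety worth flagging is verifying that one really can pick a subcube of dimension exactly $n-d$ containing a prescribed point --- this is immediate from the definition, by freezing any $d$ coordinates of $\gamma$ and leaving the others free. No macros or technical lemmas beyond those already in the paper are needed, and the quantification matches the claimed statement.
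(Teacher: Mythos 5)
Your proposal is correct and follows exactly the same route as the paper: invoke Theorem \ref{lem influences to spectral sample} to get $SD(d,\epsilon)$, pick a dimension-$(n-d)$ subcube containing $\gamma$, dominate $\fourier{f}{\gamma}^2$ by the spectral mass on that subcube, and take square roots. The only difference is that you spell out the explicit choice of $S$ and $z_0$, which the paper leaves implicit.
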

		\begin{proof}
			By Theorem \ref{lem influences to spectral sample}, if $f$ has the Balanced Influences Property $INF(d,\epsilon/2)$, then $f$ has the Spectral Discrepancy Property $SD(d,\epsilon)$. Fix $\gamma\in \F_2^n$ and let $C(S,z)$ be a subcube of dimension $n - d$ which contains $\gamma$. By $SD(d,\epsilon)$,
			\[
				\fourier{f}{\gamma}^2 \leq \sum_{\delta\in C(S,z)} \fourier{f}{\delta}^2 \leq 2^{-d} + \epsilon.
			\] Therefore, $\abs{\fourier{f}{\gamma}} \leq \sqrt{2^{-d} + \epsilon}$ for every $\gamma\in \F_2^n$. 
		\end{proof}
		
		\begin{lem}\label{lem gownorm 3 greater than balanced influences}
			For even $n$, there is a function $f:\F_2^{n}\to \spm$ which has $INF(d,0)$ for every $d \leq n$, but $\gownorm{f}{3} = 1$. 
		\end{lem}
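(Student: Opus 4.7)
The plan is to take $f = IP : \F_2^n \to \spm$, the inner product function from Example \ref{ex inner product fn}, for even $n \geq 4$ (so that the Balanced Influences hypothesis $\abs{\fourier{f}{0}} = 2^{-n/2} \leq 1/4 < 1/2$ is satisfied). I would break the argument into two short parts matching the two claims of the lemma.

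First I would verify that $INF(d,0)$ holds for every $d \leq n$. Since $IP$ is bent (see Example \ref{ex inner product fn}), Lemma \ref{lem convolution of bent functions} tells us that $(IP * IP)(\gamma) = [\gamma = 0]$. Combining this with Proposition \ref{lem influence is autocorrellation is average value of derivative} yields $\Inf_\gamma[IP] = \frac{1 - (IP*IP)(\gamma)}{2} = \frac{1}{2}$ for every nonzero $\gamma$. Hence $\abs{\Inf_\gamma[IP] - 1/2} = 0$ for every nonzero $\gamma$ in the entire Hamming ball $B_n(n,0) = \F_2^n$, giving $INF(n,0)$, which automatically implies $INF(d,0)$ for all $d \leq n$.

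Second, to evaluate $\gownorm{IP}{3}$, I would write $IP(z) = (-1)^{p(z)}$ where $p(z) = \sum_{i=1}^{m} z_i z_{m+i}$ is a polynomial over $\F_2$ of degree $2$. Substituting into the definition of the third Gowers norm yields
\begin{align*}
\gownorm{IP}{3}^{8} &= \exv_{x, v_1, v_2, v_3 \in \F_2^n} \prod_{\alpha \in \{0,1\}^3} (-1)^{p(x + \alpha_1 v_1 + \alpha_2 v_2 + \alpha_3 v_3)} \\
&= \exv_{x, v_1, v_2, v_3} (-1)^{\sum_{\alpha \in \{0,1\}^3} p(x + \alpha_1 v_1 + \alpha_2 v_2 + \alpha_3 v_3)}.
\end{align*}
The exponent in the final line is precisely the third iterated discrete derivative $(\Delta_{v_1}\Delta_{v_2}\Delta_{v_3} p)(x)$, where $\Delta_v g(x) := g(x + v) + g(x)$ over $\F_2$. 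Since each operator $\Delta_v$ lowers the $\F_2$-degree of a polynomial by at least one, applying three such operators to the quadratic $p$ annihilates it; the exponent is therefore identically $0$ mod $2$, and each term of the product equals $1$. Consequently $\gownorm{IP}{3}^{8} = 1$, so $\gownorm{IP}{3} = 1$.

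The only substantive step is the degree-reduction fact for discrete derivatives, which is a standard polynomial identity verified by expanding a monomial $z_i z_j$ under $\Delta_v$ to obtain the affine polynomial $z_i v_j + v_i z_j + v_i v_j$. The essential reason $IP$ is the right example is that bentness grants perfectly balanced influences at every rank while its quadratic $\F_2$-structure forces the third Gowers norm to saturate; this cleanly separates the Balanced Influences Property from $(\epsilon,2)$ $\F_2$-Regularity.
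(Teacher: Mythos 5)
Your proposal is correct and follows essentially the same strategy as the paper: use the inner product function $IP$, invoke its bentness for the influences half, and invoke its $\F_2$-degree $2$ for the Gowers norm half. Two small points of comparison. For the influences, you derive $\Inf_\gamma[IP] = \tfrac{1}{2}$ exactly via Proposition \ref{lem convolution of bent functions} and Proposition \ref{lem influence is autocorrellation is average value of derivative}, which gives $INF(d,0)$ as stated in the lemma; the paper's own proof instead records the weaker $INF(d,2^{-n/2})$, so your version is the more precise one and actually matches the claim being proved. For the Gowers norm, the paper cites the general fact from \cite{Hatami2019} that $\gownorm{g}{d+1}=1$ when $g$ has $\F_2$-degree $d$, whereas you reprove it in this special case via iterated discrete derivatives $\Delta_{v_1}\Delta_{v_2}\Delta_{v_3}p \equiv 0$; this buys self-containment at the cost of a few lines, and your degree-drop justification (expand $z_iz_j$ under $\Delta_v$) is exactly the right elementary verification. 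One very minor technicality worth flagging, which is a feature of the lemma statement rather than your proof: with the strict inequality in Property \ref{property directional influences}, $INF(d,0)$ is literally vacuous, so the claim is really that the deviation is exactly $0$ (equivalently $INF(d,\epsilon)$ for every $\epsilon>0$); the paper has the same issue and clearly intends the equality reading.
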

		\begin{proof}
			Consider the Inner Product function $IP(x):\F_2^n\to \spm$ defined in Example \ref{ex inner product fn}. As shown in the example, $IP$ is a bent function and therefore has the property $INF(d,2^{-n/2})$ for every $1 \leq d \leq n$. However, $IP$ has $\F_2$-Degree $2$. Since $\gownorm{g}{d + 1} = 1$ if $g$ has $\F_2$-degree $d$ \cite{Hatami2019}, we conclude that $\gownorm{IP}{3} = 1$. 
		\end{proof}

		\begin{lem}\label{lem balanced influences greater than gownorm}
				Let $g:\F_2^n\to \spm$ be a boolean function. Let $M\in \F_2^{(n + 1)\times n}$ be the projection matrix which sends $x\in \F_2^{n + 1}$ to its first $n$ coordinates, and let $w\in \F_2^{n + 1}$ be the vector with a single $1$ in the $n + 1$st coordinate. Let $f:\F_2^{n + 1}\to \spm$ be defined by $f(x) = g(Mx)$. If $g$ is $(\epsilon,k)$ $\F_2$-Regular, then 
				\begin{itemize}
					\item $f$ is $(\epsilon,k)$ $\F_2$-Regular
					\item $\Inf_{w}[f] = 0$.
				\end{itemize}
		\end{lem}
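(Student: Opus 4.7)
The plan is to handle the two conclusions separately; both reduce to two elementary observations about the projection $M$: that $Mw = 0$, and that $M$ pushes the uniform distribution on $\F_2^{n+1}$ forward to the uniform distribution on $\F_2^n$.

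For the influence claim, note that the only nonzero entry of $w$ is in its $(n+1)$st coordinate, which $M$ discards, so $Mw = 0$. Then for every $x \in \F_2^{n+1}$,
\[
f(x+w) = g(M(x+w)) = g(Mx + Mw) = g(Mx) = f(x),
\]
so $\prob_x[f(x) \neq f(x+w)] = 0$, giving $\Inf_w[f] = 0$ directly from Definition \ref{defn directional influence}.

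For the $\F_2$-regularity claim, I would use the matrix form of the Gowers norm stated in the preliminaries:
\[
\gownorm{f}{k+1}^{2^{k+1}} = \exv_{x \in \F_2^{n+1},\; N \in \F_2^{(n+1)\times (k+1)}} \prod_{v \in \F_2^{k+1}} f(x + Nv).
\]
Substituting $f(y) = g(My)$ rewrites the integrand as $\prod_v g(Mx + MNv)$. The key observation is that if $x$ is uniform on $\F_2^{n+1}$, then $Mx$ is uniform on $\F_2^n$, and if $N$ is a uniform matrix in $\F_2^{(n+1)\times(k+1)}$, then $MN$ is a uniform matrix in $\F_2^{n\times(k+1)}$, since each column of $N$ is an independent uniform vector in $\F_2^{n+1}$ and $M$ applied to such a vector is uniform in $\F_2^n$. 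Changing variables to $y = Mx$ and $N' = MN$ then yields
\[
\gownorm{f}{k+1}^{2^{k+1}} = \exv_{y \in \F_2^n,\; N' \in \F_2^{n \times (k+1)}} \prod_{v \in \F_2^{k+1}} g(y + N'v) = \gownorm{g}{k+1}^{2^{k+1}},
\]
so the bound $\gownorm{g}{k+1} < \epsilon$ transfers directly to $f$.

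There is no real obstacle here; the only mild subtlety is choosing the matrix form of the Gowers norm over the iterated-difference form, because the projection argument then reduces to the elementary fact that linear surjections preserve the uniform distribution.
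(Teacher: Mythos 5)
Your proof is correct and follows essentially the same route as the paper's: both conclusions are reduced to the facts that $Mw=0$ and that the surjective linear map $M$ pushes the uniform distribution forward to the uniform distribution, applied via the matrix form of the Gowers norm. The only notable (and favorable) difference is that you compute $\gownorm{f}{k+1}$, correctly matching the definition of $(\epsilon,k)$ $\F_2$-Regularity, whereas the paper's displayed chain writes $\gownorm{f}{k}$ (an inconsequential indexing slip); you also phrase the change of variables as a pushforward of the uniform measure, while the paper cancels the preimage multiplicity factor explicitly, but these are the same calculation.
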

		\begin{proof}
			 We first show that $f$ is $(\epsilon,k)$ $\F_2$-Regular. To that end, we have
			\begin{align}
				\gownorm{f}{k} &= \left(\exv_{x\in \F_2^n} \exv_{N\in \F_2^{n\times k}} \prod_{v\in \F_2^k} f(x + Nv)\right)^{2^{-k}}\nonumber\\
				&= \left(\exv_{x\in \F_2^n} \exv_{N\in \F_2^{n\times k}} \prod_{v\in \F_2^k} g(M(x + Nv))\right)^{2^{-k}}\nonumber\\
				&= \left(\exv_{x\in \F_2^n} \exv_{N\in \F_2^{n\times k}} \prod_{v\in \F_2^k} g(Mx + MNv)\right)^{2^{-k}}\nonumber\\
				&= \left(\frac{2^{k + 1}2^{(n - 1)(k + 1)}}{2^{n(k + 1)}}\exv_{y\in \F_2^{n - 1}} \exv_{P\in \F_2^{n - 1\times k}} \prod_{v\in \F_2^k} g(y + Pv)\right)^{2^{-k}}\label{eqn projection}\\
				&= \gownorm{g}{k}\label{eqn use gowers norm bound}\\
				&\leq \epsilon
			\end{align} where we use the fact that $M$ is a projection in line (\ref{eqn projection}), and use our assumption on $g$ in line (\ref{eqn use gowers norm bound}). Thus $f$ is $(\epsilon,k)$ $\F_2$-Regular.
			
			For the second claim, we observe that $f(x + w) = f(x)$ for every $x$. Therefore, $\Inf_{w}[f] = 0$. It follows that $f$ cannot have $INF(d,\epsilon)$ for any $d\geq 1$ and $\epsilon < \half$. 
		\end{proof}

		Now we can prove each of theorems relating our properties to extant theories. 
		\begin{proof}[Proof of Theorem (\ref{thm relation to F2 regularity})]
			We have three claims to prove. First, we consider the relationship between $(\epsilon,n)$ Balanced Influences and $(\epsilon,1)$ $\F_2$-Regularity. Let $f:\F_2^n\to \spm$ satisfy $INF(d,\epsilon)$. By Lemma \ref{lem balanced influences greater than gownorm}, $f$ is $(\sqrt{2^{-d} + \epsilon},n)$ $\R$-regular. By Proposition 6.7 in O'Donnell's book \cite{o'donnell_2014}, $(\epsilon,1)$ $\F_2$-Regularity is equivalent to $(\epsilon,n)$ $\R$-Regularity. Thus, $f$ is $(\sqrt{2^{-d} + \epsilon},1)$ $\F_2$-Regular. 
			
			Now we show that $(\epsilon,k)$ Balanced Influences is comparable with $(\epsilon,d)$ $\F_2$-Regularity for $d > 1$ and any $k$. First we show that Balanced Influences $INF(d,\epsilon)$ cannot imply $(\epsilon,2)$ $\F_2$-Regularity.   Lemma \ref{lem gownorm 3 greater than balanced influences} provides a bent function $f$ such that $\gownorm{f}{3} = 1$. It follows that the class of bent functions is distinct from $(\epsilon,d)$ $\F_2$-Regular functions for any $d \geq 2$ and $\epsilon < 1$. Since every Bent function satisfies $INF(k,0)$ for any $k\in \N$ with $1 \leq k \leq n$, it follows that $INF(d,\epsilon)$ cannot imply $(\epsilon,2)$ $\F_2$-Regularity.

			Now we can show that $(\epsilon,d)$ $\F_2$-Regularity cannot imply $(\epsilon,k)$ Balanced Influences for any $k \geq 1$. Let $g:\F_2^n\to \spm$ be quadratic residue function considered in Example 4.12 of \cite{castro-silva2021quasirandomness}. For any $\epsilon > 0$, there is $n$ sufficiently large such that $g$ is $(\epsilon,k)$ $\F_2$-Regular.  By lemma \ref{lem balanced influences s to F2 degree 1 regular} applied to $g$, we find an $f:\F_2^{n + 1}\to \spm$ which is $(\epsilon,k)$ $\F_2$-Regular yet there is a vector $w\in \F_2^{n + 1}$ such that $\Inf_{w}[f] = 0$.  Thus, $f$ cannot have the Balanced Influences Property $INF(k,\epsilon)$ for any $k \geq 1$ and $\epsilon < 1$.
			
			 It follows that $(\epsilon,k)$ $\F_2$-Regularity and Quasirandomness of rank $d$ with error $\epsilon$ are incomparable for $k \geq 2$ and $d \geq 1$. 
		\end{proof}
	
		\begin{proof}[Proof of Theorem \ref{thm relation to R-regularity}]
			We have two claims to show. First, we prove that $(\delta,k)$ Balanced Influences implies $(\epsilon,n)$ $\R$-Regularity. Lemma \ref{lem balanced influences s to F2 degree 1 regular} implies that if $f:\F_2^n\to \spm$ has $INF(d,\epsilon)$, then $f$ is also $(\sqrt{2^{-d} + \epsilon},n)$ $\R$-Regular. Since $(\epsilon,k)$ $\R$-Regularity implies $(\epsilon,k - 1)$ $\R$-Regularity by definition, it follows that any $f:\F_2^n\to \spm$ with $(\epsilon,d)$ Balanced Influences also satisfies $(\epsilon,k)$ $\R$-Regularity for any $k \leq n$.

			For the second claim, we must show that $(\epsilon,k)$ $\R$-Regularity cannot imply $(\epsilon,d)$ Balanced Influences for any $k \leq n$, $d \geq 1$ or $\epsilon < 1$. Consider the inner product function $IP:\F_2^{2n}\to \spm$ defined in Example \ref{eqn IP proof orthogonality}. By applying Lemma \ref{lem balanced influences greater than gownorm} to $IP$, we find an $f:\F_2^{2n + 1}\to \spm$ which is  $(2^{-n/2},n)$ $\R$-regular and yet does not have $INF(d,\epsilon)$ for any $d \geq 1$ and $\epsilon < \half$. As $(\epsilon,n)$ $\R$-regularity implies $(\epsilon,k)$ $\R$-regularity for $k < n$, it follows that $(\epsilon,k)$ $\R$-regularity does not imply $INF(d,\epsilon)$ for any $k,d,\epsilon$. Thus $(\delta,k)$ $\R$-Regularity cannot imply $(\epsilon,d)$ Balanced Influences for any $\delta > 0$ or $k \leq n$.
		\end{proof}
	
		\begin{proof}[Proof of Theorem (\ref{thm relation to small stable influences})]
			 Assume $f$ satisfies $SD(d,\epsilon e^{(d - 1)((\delta/2) - \ln(2))})$. We want show that 
			 \[
			 	\Inf_{i}^{1 - \delta}[f] = \sum_{\substack{\gamma\in \F_2^n\\ \gamma_i = 1}} (1 - \delta)^{\abs{\gamma} - 1}\fourier{f}{\gamma}^2
			 \] is at most $\epsilon$. We observe that the set of $\gamma\in \F_2^n$ with $\gamma_i = 1$ is precisely the $n - 1$-dimensional subcubes $C(\{i\},1)$, and the same subcube may be divided into $2^{d - 1}$ subcubes of dimension $n - d$ follows. Pick a set $S$ of size $d$ such that $\{i\} \subset S \subset [n]$. Then, $C(\overline{\{i\}},1) = \bigsqcup_{\substack{z\in \F_2^S\\ z_i = 1}} C(S,z)$. Therefore,
			 \begin{align*}
			 		\Inf_{i}^{1 - \delta}[f] &= \sum_{\substack{\gamma\in \F_2^n\\ \gamma_i = 1}} (1 - \delta)^{\abs{\gamma} - 1}\fourier{f}{\gamma}^2\\
			 		&= \sum_{\substack{z\in \F_2^S\\ z_i = 1}} \sum_{\gamma\in C(S,z)} (1 - \delta)^{\abs{\gamma} - 1}\fourier{f}{\gamma}^2\\
			 		&\leq \sum_{\substack{z\in \F_2^S\\ z_i = 1}} \left(\max_{\gamma\in C(S,z)} (1 - \delta)^{\abs{\gamma} - 1}\right)\sum_{\gamma\in C(S,z)} \fourier{f}{\gamma}^2\\
			 		&\leq \epsilon e^{(d - 1)((\delta/2) - \ln(2))}\sum_{\substack{z\in \F_2^S\\ z_i = 1}} \left(\max_{\gamma\in C(S,z)} (1 - \delta)^{\abs{\gamma} - 1}\right)\\
			 \end{align*}
			 where we use $SD(d,\epsilon e^{(d - 1)((\delta/2) - \ln(2))})$ in the ultimate line. Now we can simplify the result further:
			 \begin{align*}		
			 		\Inf_{i}^{1 - \delta}[f] &\leq  \epsilon e^{(d - 1)((\delta/2) - \ln(2))}\sum_{\substack{z\in \F_2^S\\ z_i = 1}}  (1 - \delta)^{\abs{z} - 1}\\
			 		&= \epsilon e^{(d - 1)((\delta/2) - \ln(2))}\left(\sum_{j = 0}^{d - 1} \binom{d - 1}{j}(1- \delta)^j\right)\\
			 		&= \epsilon e^{(d - 1)((\delta/2) - \ln(2))}(2 - \delta)^{d - 1}\\
			 		&= \epsilon e^{(d - 1)(\delta/2)}\left(1 - \frac{\delta}{2}\right)^{d - 1}\\
			 		&\leq \epsilon
			 \end{align*}
			 where we use $(1 + x) \leq e^{x}$ in the final line. Thus $f$ has $(\epsilon,\delta)$-Small Stable Influences. 
			 
			 Conversely, one can easily verify that $\chi_{\bone}$ has $((1 - \delta)^n,\delta)$-Small Stable Influences, but $\Inf_{\gamma}[\chi_{\bone}] =1$ for every $\gamma\in \F_2^n$ with Hamming weight $1$. Thus $\chi_{\bone}$ does not have $INF(d,\epsilon)$ for any $d \geq 1$ unless $\epsilon = \half$.  
		\end{proof}
	
		The relationship between $\Zp{2^n}$ Regularity and the other theories is more intricate than our other theories of quasi-randomness, largely due to the algebraic differences between $\Zp{2^n}$ and $\F_2^n$. As boolean functions in the sense of $\Zp{2^n}$-Regularity are not functions on $\F_2^n$, we have the following definition to transfer results between these two theories:
		\begin{defn}\label{defn transfer F_2^n to Z/2^nZ}
			Given $z\in \Zp{2^n}$, let  $z^*\in \F_2^n$ denote the vector such that
			\[
				z_i^* = a_i
			\] where $z = \sum_{i = 1}^{n} a_i2^{i - 1}$ is the binary expansion of $z$.
		\end{defn}

		Chung and Graham [\cite{Chung1992}, Prop. 6.2] prove the following result, translated into our notation:
		
		\begin{lem}\cite{Chung1992}\label{lem all ones vector is Zp pseudorandom}
			 There is an absolute constant $C$ such that the function $z\to \chi_{\bone}(z^*)$ is $\Zp{2^n}$-Regular with error bound $C\left(\dfrac{\sqrt{2 + \sqrt{2}}}{2}\right)^n \approx 0.92^n$.
		\end{lem}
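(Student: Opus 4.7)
The proof is a direct Fourier computation exploiting the binary structure of $z\in\Zp{2^n}$. Writing $z = \sum_{i=0}^{n-1} z_i^*\, 2^i$, both $\chi_{\bone}(z^*) = \prod_{i} (-1)^{z_i^*}$ and $\exp(2\pi i jz/2^n) = \prod_i \exp(2\pi i j\,2^i z_i^*/2^n)$ factor through the bits. Since $z$ uniform on $\Zp{2^n}$ makes the bits i.i.d.\ uniform in $\{0,1\}$, the expectation splits as
\[
\exv_z \chi_{\bone}(z^*)\exp(2\pi i jz/2^n) \;=\; \prod_{k=1}^n \frac{1 - \exp(2\pi i j/2^k)}{2}
\]
after the reindexing $k = n-i$, so (using $|1-e^{i\theta}| = 2|\sin(\theta/2)|$) its modulus equals $\prod_{k=1}^n |\sin(\pi j/2^k)|$.

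If the $2$-adic valuation $a = v_2(j)$ lies in $[1,n]$, then $\pi j/2^a = \pi m$ for the odd integer $m = j/2^a$, making the $k=a$ factor vanish. Hence the maximum over nonzero $j$ is attained for $j$ odd, where $|\sin(\pi j/2)| = 1$ and $|\sin(\pi j/4)| = \tfrac{\sqrt{2}}{2}$ are forced. The task reduces to bounding $\prod_{k=3}^n |\sin(\pi j/2^k)|$ uniformly in odd $j$ by $C\cos(\pi/8)^{n-2}$.

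The structural input is the double-angle identity $|\sin(\pi j/2^{k-1})| = 2|\sin(\pi j/2^k)||\cos(\pi j/2^k)|$: setting $s_k := |\sin(\pi j/2^k)|^2$, this yields the backward logistic recursion $s_{k-1} = 4 s_k(1-s_k)$, so consecutive $s_k$ cannot both be close to $1$. The worst-case trajectory clusters near the logistic fixed point $s = 3/4$, which corresponds to the doubling map's period-$2$ orbit $\{1/3, 2/3\}$ (approximated by odd dyadic inputs with $j/2^n \to 1/3$). The resulting asymptotic geometric mean of $|\sin(\pi j/2^k)|$ is $\sqrt{3}/2 < \cos(\pi/8)$, so the bound $\prod_{k=1}^n|\sin(\pi j/2^k)| \leq C\cos(\pi/8)^n$ holds for large $n$ with an absolute constant $C$ chosen to absorb the first few factors. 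The main obstacle is extracting the explicit constant $\cos(\pi/8) = \tfrac{\sqrt{2+\sqrt{2}}}{2}$ rather than an implicit rate; the cleanest route is a grouping argument using the per-factor bound $\max_{j\text{ odd}}|\sin(\pi j/2^k)| = \cos(\pi/2^k)$ for $k\geq 2$ together with the observation that these maxima cannot be simultaneously attained on a single dyadic orbit, so pooling consecutive triples of factors recovers the stated rate.
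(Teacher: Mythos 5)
The paper does not prove this lemma itself---it is cited from Chung and Graham [Chung1992]---so there is no internal proof to compare against. Your reduction is correct and is the natural route: with $z=\sum_{i=0}^{n-1} z_i^* 2^i$, a uniform $z\in\Zp{2^n}$ has i.i.d.\ bits, and both $\chi_{\bone}(z^*)$ and $\exp(2\pi i jz/2^n)$ factor through the bits, so the expectation is $\prod_{k=1}^n \tfrac{1-\exp(2\pi i j/2^k)}{2}$, with modulus $\prod_{k=1}^n|\sin(\pi j/2^k)|$. Your observation that this vanishes unless $j$ is odd (since $\sin(\pi j/2^a)=0$ when $v_2(j)=a\ge 1$) is also correct.

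The gap is in the final bound. The passage about the logistic recursion $s_{k-1}=4s_k(1-s_k)$ and the period-two orbit $\{1/3,2/3\}$ is a heuristic, not a proof: it identifies $\sqrt{3}/2$ as the plausible extremal geometric mean but establishes no uniform bound over all odd $j$ and all $n$. The closing proposal to pool ``consecutive triples of factors'' is left unexecuted, and in any case triples are unnecessary. The clean argument is pairing: since $\pi j/2^{k-1}=2(\pi j/2^k)$, consecutive factors combine as
\[
|\sin(2\theta)|\,|\sin\theta| \;=\; 2\sin^2\theta\,|\cos\theta|,
\]
and writing $u=\sin^2\theta$ and maximizing $2u\sqrt{1-u}$ on $[0,1]$ gives the uniform bound $4/(3\sqrt{3})$, attained at $u=2/3$. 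Grouping the $n$ factors into $\lfloor n/2\rfloor$ disjoint consecutive pairs yields
\[
\prod_{k=1}^n |\sin(\pi j/2^k)| \;\le\; \left(\frac{4}{3\sqrt{3}}\right)^{\lfloor n/2\rfloor} \;\le\; C\left(\frac{2}{3^{3/4}}\right)^{n},
\]
and $2\cdot 3^{-3/4}\approx 0.877 < \tfrac{\sqrt{2+\sqrt{2}}}{2} = \cos(\pi/8)\approx 0.924$, so the stated (weaker) bound follows with an absolute constant. Finally, your identification $\max_{j\text{ odd}}|\sin(\pi j/2^k)|=\cos(\pi/2^k)$ is correct but cannot by itself yield exponential decay, because $\prod_{k\ge 2}\cos(\pi/2^k)$ converges to the positive constant $2/\pi$ (Vi\`ete); the interaction between consecutive factors is what produces the decay, and the pairing is precisely what captures it.
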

	
		As $\chi_{\bone}$ is a Fourier character, $\chi_{\bone}$ cannot be $(\epsilon,n)$ $\R$-Regular for any $ \epsilon < 1$. Thus for any $\delta > 0$, $\Zp{2^n}$ Regularity with error bound $\delta$ does not imply $(\epsilon,n)$ $\R$-Regularity for any $\epsilon < 1$. Here we extend their result to show that $\Zp{2^n}$ Regularity with error bound $\delta$ cannot even imply $(\epsilon,k)$ $\R$-Regularity for a wide range of $k < n$. 
		
		\begin{proof}[Proof of Theorem \ref{thm relation to Z/2^nZ regularity}]
			Set $k = \lrceil{-C_0\ln(\epsilon)}$ for some absolute constant $C_0$ to be defined later. Define $S=\{1,\dots,n - k\}$. Define $\gamma\in \F_2^n$ by $\gamma:= \bone\underset{S}{\otimes} 0$ where $\bone\in \F_2^S$ is the all-ones vector and $0\in \F_2^{\overline{S}}$ is the zero vector. We will show that $\chi_{\gamma}$ is $\epsilon$ $\Zp{2^n}$-Regular.  
			
			Define $\omega_n := \exp\left(\dfrac{2\pi i }{2^n}\right)$. Now let $c\in \Zp{2^n}\setminus \{0\}$ be arbitrary, and via the Euclidean algorithm, write $c = 2^ka + b$ where $0 \leq b < 2^k$. Then,
			\begin{align*}
				\exv_{z\in \Zp{2^n}} \chi_{\gamma}(z^*)\omega_n^{-cz} &= \exv_{0 \leq y < 2^{n - k}} \exv_{0 \leq x < 2^{k}} \chi_{\gamma}(y^*\underset{S}{\otimes} x^*)\omega_n^{-(2^ka + b)(2^{n - k}x + y)}\\
				\intertext{We have $\chi_{\gamma}(y^*\underset{S}{\otimes} x^*) = \chi_{\bone}(y^*)\chi_{0}(x^*) = \chi_{j}(y^*)$ by definition of $\gamma$. Hence,}
				&= \exv_{0 \leq y < 2^{n - k}} \exv_{0 \leq x < 2^{k}} \chi_{\bone}(y^*)\omega_n^{-2^nax - 2^{n - k}xb - 2^kay - by}\\
				&= \exv_{0 \leq y < 2^{n - k}} \exv_{0 \leq x < 2^{k}} \chi_{\bone}(y^*)\omega_n^{- 2^{n - k}xb - 2^kay - by}\\
				&= \exv_{0 \leq y < 2^{n - k}} \chi_{\bone}(y^*)\omega_n^{-2^kay - by}\exv_{0 \leq x < 2^{k}} \omega_n^{-2^{n - k}xb}\\
				&= \exv_{0 \leq y < 2^{n - k}} \chi_{\bone}(y^*)\omega_n^{-2^kay - by}\exv_{0 \leq x < 2^{k}} \omega_k^{-xb}\\
				&= \begin{cases}
				0 & b \neq 0\\
				\exv_{0 \leq y < 2^{n - k}} \chi_{\bone}(y^*)\omega_n^{-2^kay} & b = 0
				\end{cases}\\
				&= \begin{cases}
				0 & b \neq 0\\
				\exv_{0 \leq y < 2^{n - k}} \chi_{\bone}(y^*)\omega_{n - k}^{ay} & b = 0
				\end{cases}
			\end{align*}
			We may now apply Lemma \ref{lem all ones vector is Zp pseudorandom} so that
			\begin{align*}
				\abs{\exv_{z\in \Zp{2^n}} \chi_{\gamma}^*(z)\omega_n^{-cz} } &\leq C\frac{\sqrt{2 + \sqrt{2}}}{2}^{k}\\
				&\leq C\left(\dfrac{\sqrt{2+\sqrt{2}}}{2}\right)^{-C_0\ln(\epsilon)}\\
				&= \epsilon
			\end{align*}
			where $C_0$ is a sufficiently large absolute constant. 
			Thus $z\to \chi_{\gamma}(z^*)$ is $\epsilon$-$\Zp{2^n}$ Regular. However, $\abs{\gamma} = n - k$, and so $\chi_{\gamma}$ cannot be $(\epsilon,n - k + 1)$ $\R$-Regular for any $\epsilon < 1$. Thus $\delta$-$\Zp{2^n}$ Regularity does not imply $(\epsilon,n - k)$ $\R$-Regularity for any $\epsilon < 1$. 
		\end{proof}

	\section{Problems and remarks}\label{sec conclusion}
		Our proof of Theorem (\ref{thm tower theorem}) provides a large class of examples of functions which satisfy $INF(d,0)$ but not $INF(d + 1,\epsilon)$ for $\epsilon$ small and any $d \geq 1$. Furthermore, these functions have the property that $f*f(x)$ is the indicator function for some binary linear code. Many questions remain, some of which we include here. 
		\begin{ques}\label{ques nonlinear codes}
			Let $f:\F_2^n\to\spm$ be a boolean function such that
			\[
				f*f(x) = [x\in \mathcal{C}]
			\] for some \emph{nonlinear} binary code $\mathcal{C} \subseteq \F_2^n$ of distance $d$. Do such functions exist, and if so, is it true that $f$ has $INF(d,0)$ but not $INF(d + 1,\epsilon)$ for $\epsilon< \half$?
		\end{ques}
		\begin{ques}\label{ques classification}
			Is there a classification of functions $f:\F_2^n\to \spm$ which satisfy 
			\[
			f*f(x) = [x\in \mathcal{C}]
			\] for some $[n,k,d]$ binary linear code $\mathcal{C}\subseteq \F_2^n$?
		\end{ques}
		We remark that any progress on this question will lead towards a solution of the problem of enumerating bent functions. 
		
		There are several other interesting directions to consider.The relationship between our work and $(\epsilon,d)$-$\F_2$-Regularity is summarized in Figure \ref{figure relation with F2 regularity}.
		\begin{figure}[htbp]
			\centering
			\begin{tikzpicture}
			\tikzstyle{statement} = [rectangle,draw=black,fill = white,minimum width= 2cm,minimum height= 1cm];
			\tikzstyle{Implication} = [line width=1mm,draw=black!50];
			\tikzstyle{Incomparable} = [dotted,line width=1mm,draw=red!50];			
			\node[statement] (gow1) at (2,0) {$(\epsilon,1)$ $\F_2$-Regular};
			\node[statement] (gow2) at (2,2) {$(\epsilon,2)$ $\F_2$-Regular};

			\node[statement] (gowk) at (2,7) {$(\epsilon,d)$ $\F_2$-Regular};
			
			\node[blue,statement] (inf1) at (-4,1) {\textbf{$(\epsilon,1)$-Balanced Influences}};
			\node[blue] (infdots) at (-4,3) {$\vdots$};
			
			\node[blue,statement] (infn) at (-4,5) {\textbf{$(\epsilon,n)$-Balanced Influences}};
			\node (bent) at (-4,7) {Bent};
			
			\draw[->,Implication] (gowk) -- node[right] {\cite{castro-silva2021quasirandomness,chung1993communication}} (2,5.5);

			\draw[->,Implication] (2,3.5) -- node[right] {\cite{castro-silva2021quasirandomness,chung1993communication}} (gow2);
			
			\draw[->,Implication] (gow2) -- node[right] {\cite{castro-silva2021quasirandomness,chung1993communication}} (gow1);
			
			\foreach \i in {3.67,4.33,5}{
				\node[circle,minimum size=1mm,inner sep=0pt,outer sep=0pt,fill=black!50] at (2,\i) {};
			}
			
			
			\draw[blue,->,Implication] (infn) -- (gow1) node[midway,pos=0.67,sloped,above] {Theorem \ref*{thm relation to F2 regularity}};
			\draw[blue,->,Implication] (bent) -- (infn) node[midway,left] {Def.};

			\draw[blue,->,Implication] (infn) --  (-4,4) node[midway,left] {Def.};
			\draw[blue,->,Implication] (-4,2) --  (inf1) node[midway,left] {Def.};
			\draw[blue,->,Implication] (inf1) -- node[midway,below,sloped] {Theorem \ref*{thm relation to F2 regularity}} (gow1);
			
			\foreach \i in {2.33,3,3.67}{
				\node[circle,minimum size=1mm,inner sep=0pt,outer sep=0pt,fill=black!50] at (-4,\i) {};
			}
			
			\draw[red,dotted,Incomparable,->] (bent) -- node[pos=0.25,above,sloped] {Theorem \ref*{thm relation to F2 regularity}} node[solid,midway,cross,sloped,minimum width =12pt,minimum height =12pt] {} (gow2);
			\draw[red,dotted,Incomparable,->] (gowk) -- node[pos=0.25,above,sloped] {Theorem \ref*{thm relation to F2 regularity}} node[solid,midway,cross,sloped,minimum width =12pt,minimum height =12pt] {} (inf1);	
			\end{tikzpicture}
			\caption{The relationships between the properties in Theorem \ref{thm main result} and $\F_2$-Regularity. Each arrow is a strict implication. Each arrow which follows by definition has no label, and all other arrows have a label with a reference to the proof of the implication.  The results of this paper are in bold blue text and dashed blue arrows. Incomparable properties are linked by red dotted lines.}
			\label{figure relation with F2 regularity}
		\end{figure}
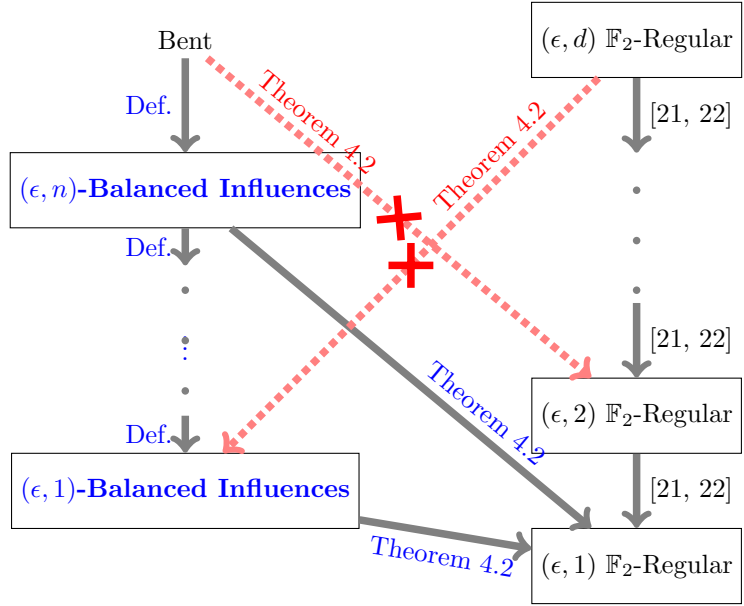
	 One can observe that our quasirandom theorem takes a new direction off $(\epsilon,1)$-$\F_2$-Regularity in the quasirandom hierarchy defined in \cite{castro-silva2021quasirandomness}. One might ask if there is an analogue of these results for the $i$th level of the same hierarchy. As the Balanced Influences Property $INF(n,0)$ is equivalent to a function being bent, any analogue of our results for higher levels may provide a ``higher order'' analogue of bent functions. Such functions are likely to have very strong cryptographic properties and any constructions of them would be interesting in their own right. 
	 
	 Finally, the relationship between quasirandomness over $\F_2^n$ and over $\Zp{2^n}$ seems to be of particular interest. We have the following conjecture, largely from numerical evidence:
	 \begin{conj}\label{conj relation to Z/2^nZ regularity}
	 	For any $\epsilon > 0$, there is a $k = k(\epsilon)$ and a $\delta = \delta(\epsilon) > 0$ such that $(\delta,k)$-$\R$-regularity implies $\epsilon$-$\Zp{2^n}$-Regularity.
	 \end{conj}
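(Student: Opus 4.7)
The plan is to prove the implication by a Fourier-analytic reduction that expands each nonzero $\Zp{2^n}$-character in the $\F_2^n$-Fourier basis via the binary-expansion identification $z = \sum_{i=1}^n a_i 2^{i-1}$. For $j \neq 0$, the character $\phi_j(z) := \exp(2\pi i j z/2^n)$ factors as $\phi_j(a) = \prod_{i=1}^n \psi_i(a_i)$, where $\psi_i(a) = \exp(2\pi i j 2^{i-1} a/2^n)$ depends only on the single bit $a_i$. Each $\psi_i$ has a two-term $\F_2$-Fourier expansion $\psi_i = \alpha_i + \beta_i \chi_1$ with $|\alpha_i|^2 = \cos^2(\pi j/2^{n-i+1})$ and $|\beta_i|^2 = \sin^2(\pi j/2^{n-i+1})$. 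Expanding the product yields
\[
\phi_j = \sum_{\gamma \in \F_2^n} c_{j,\gamma}\,\chi_{\gamma}, \qquad c_{j,\gamma} = \prod_{i:\gamma_i=1} \beta_i \prod_{i:\gamma_i=0} \alpha_i,
\]
so that $\{|c_{j,\gamma}|^2\}_\gamma$ is the mass function of the product Bernoulli distribution on $\F_2^n$ with parameters $p_i := \sin^2(\pi j/2^{n-i+1})$.

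Next, I would write $\exv_z f(z)\overline{\phi_j(z)} = \sum_\gamma \widehat{f}(\gamma)\,\overline{c_{j,\gamma}}$ and split the sum at a threshold $|\gamma| = k$. Cauchy--Schwarz and Parseval bound the high-weight portion by $\sqrt{\sum_{|\gamma|>k}|c_{j,\gamma}|^2}$, which is simply the $>k$-tail of a sum of independent Bernoulli random variables $X_i \sim \mathrm{Bernoulli}(p_i)$ and can be pushed below $\epsilon/2$ via Chernoff or Bernstein-type concentration once $k$ dominates $\sum_i p_i$ by a margin governed by $\epsilon$. For the low-weight portion, the $(\delta,k)$ $\R$-regularity hypothesis yields $|\sum_{|\gamma|\leq k}\widehat{f}(\gamma)\overline{c_{j,\gamma}}| \leq \delta \sum_{|\gamma|\leq k}|c_{j,\gamma}|$, and I would then choose $\delta = \delta(\epsilon)$ small enough relative to this $\ell^1$-sum to conclude.

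The main obstacle is that both bounds must hold uniformly in $j$ and $n$, yet the Bernoulli mean $\sum_i p_i = \sum_i \sin^2(\pi j/2^{n-i+1})$ can grow as $\Theta(n)$ for ``mixing'' $j$ whose binary expansion has many alternating bits (so $p_i$ is bounded away from $0$ and $1$ for many $i$). In that regime the $L^2$-mass of $c_{j,\cdot}$ sits on Hamming weights of order $n$, the high-weight tail fails to concentrate below any fixed $k$, and the $\ell^1$-sum over low weights scales like $2^{k/2}$ or worse, forcing $\delta$ to shrink with $n$. To circumvent this I would first reduce to the odd case $v_2(j) = 0$—a $j$ with $v_2(j) = n - t$ descends to a $\Zp{2^t}$-character on the bottom $t$ bits, so the problem is equivalent to its restriction to $\F_2^t$—and then iteratively integrate out the $O(\log(1/\epsilon))$ many coordinates $i$ for which $p_i$ is bounded away from $0$. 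Each such integration amounts to averaging $f$ against a linear combination of two of its restrictions, which by Property~\ref{property restriction fourier} preserves $\R$-regularity up to a controlled multiplicative loss while removing one factor $\psi_i$ from $\phi_j$. After these reductions the effective Bernoulli mean becomes a constant depending only on $\epsilon$, and the Cauchy--Schwarz-plus-concentration argument above should close.

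The hardest part will be tracking the quantitative loss in $\delta$ across these reductions, since each integration can erode the $\R$-regularity parameter; a tight analysis will likely require iterated use of the equivalence between low-weight $\R$-regularity and the restricted-Fourier bound developed in Section~\ref{sec main proof}, together with a careful accounting of how the thresholds $k$ and $\delta$ propagate through the successive conditionings. If this direct route proves intractable, an alternate avenue would be to bypass the Fourier splitting entirely and attempt a more structural decomposition of $\phi_j$ into a short sum of ``step'' functions depending on few bits, mirroring the wavelet-like decomposition implicit in Chung and Graham's analysis of $\Zp{2^n}$-pseudorandomness in \cite{Chung1992}.
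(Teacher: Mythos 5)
This statement is a conjecture, not a theorem: the paper gives no proof, and explicitly says it is offered on the basis of numerical evidence. The only thing in the paper's direction is the appendix's Proposition \ref{prop strong R regularity to Z/2^nZ regularity}, which is a non-uniform weakening (it assumes $n \geq C\log(1/\epsilon)$ and $\R$-regularity at level $n$, and the authors immediately note its dependence on $n$). So there is no paper proof to compare against. On the merits, the first half of your plan is sound and in fact parallels the paper's own appendix computation (Lemma \ref{lem cyclic Fouriercoeffs of parity}): the factorization $\phi_j = \prod_i \psi_i$, the two-term $\F_2$-Fourier expansion of each $\psi_i$, the identification of $\abs{c_{j,\gamma}}^2$ as the product-Bernoulli measure with parameters $p_i = \sin^2(\pi j/2^{n-i+1})$, and the Cauchy--Schwarz split at weight $k$ are all correct. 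You also correctly identify the central obstacle: $\mu_j := \sum_i p_i$ can be $\Theta(n)$, so no $n$-independent $k$ kills the high-weight tail.

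The gap is in the proposed resolution. Reducing to odd $j$ does not shrink the set of bad coordinates: take $j$ with an (approximately) alternating binary expansion, e.g.\ $j$ near $2^n/3$, which is odd. Then $(j \bmod 2^m)/2^m$ stays near $1/3$ (for $m$ even) or near $1/6$ (for $m$ odd) for almost every $m$, so $p_i = \sin^2\big(\pi (j\bmod 2^{n-i+1})/2^{n-i+1}\big)$ is bounded below by an absolute constant for $\Theta(n)$ values of $i$. Thus the premise of your fix --- that there are only ``$O(\log(1/\epsilon))$ many coordinates $i$ for which $p_i$ is bounded away from $0$'' --- is false, and the subsequent claim that ``after these reductions the effective Bernoulli mean becomes a constant depending only on $\epsilon$'' fails for such $j$. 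Integrating out $\Theta(n)$ coordinates (and paying a loss in $\R$-regularity at each step, as you yourself anticipate) reproduces exactly the $n$-dependent bound of the appendix proposition rather than removing it. The descent at $v_2(j) = n - t$ is also stated too loosely, since $f$ does not literally become a function on $t$ bits --- one must average over the top $n-t$ coordinates, and the averaged function is no longer $\spm$-valued --- but that is fixable on the Fourier side and is not the essential issue. As written, the proposal has no mechanism to handle the alternating-bit $j$'s, and I do not see how to close this without a genuinely new idea; the conjecture should be regarded as open.
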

 	
	\bibliographystyle{ieeetr}
	\bibliography{quasi_random_bib}	

\begin{thebibliography}{10}

\bibitem{Chung1989}
F.~R. Chung, R.~L. Graham, and R.~M. Wilson, ``Quasi-random graphs,'' {\em
  Combinatorica}, vol.~9, pp.~345--362, 12 1989.

\bibitem{Griffiths2011}
S.~Griffiths, ``{Quasi-Random Oriented Graphs},'' {\em Journal of Graph
  Theory}, vol.~74, pp.~198--209, oct 2013.

\bibitem{Cooper2004}
J.~N. Cooper, ``Quasirandom permutations,'' {\em Journal of Combinatorial
  Theory, Series A}, vol.~106, pp.~123--143, 2004.

\bibitem{Chung1991b}
F.~R. Chung and R.~L. Graham, ``Quasi-random tournaments,'' {\em Journal of
  Graph Theory}, vol.~15, pp.~173--198, 6 1991.

\bibitem{Chung1992}
F.~R. Chung and R.~L. Graham, ``Quasi-random subsets of $\mathbb{Z}_n$,'' {\em
  Journal of Combinatorial Theory Series A}, vol.~61, pp.~64--86, 9 1992.

\bibitem{friedman1989second}
J.~Friedman and A.~Wigderson, {\em On the second eigenvalue of hypergraphs}.
\newblock Citeseer, 1989.

\bibitem{Kohayakawa2010}
Y.~Kohayakawa, B.~Nagle, V.~Rödl, and M.~Schacht, ``Weak hypergraph regularity
  and linear hypergraphs,'' {\em Journal of Combinatorial Theory, Series B},
  vol.~100, pp.~151--160, 3 2010.

\bibitem{Lenz2015}
J.~Lenz and D.~Mubayi, ``Eigenvalues and linear quasirandom hypergraphs,'' {\em
  Forum of Mathematics, Sigma}, vol.~3, p.~26, 1 2015.

\bibitem{Chung1990}
F.~R. Chung and R.~L. Graham, ``Quasi‐random hypergraphs,'' {\em Random
  Structures and Algorithms}, vol.~1, pp.~105--124, 1990.

\bibitem{Chung1990a}
F.~R. Chung, ``Quasi-random classes of hypergraphs,'' {\em Random Structures
  and Algorithms}, vol.~1, pp.~363--382, 12 1990.

\bibitem{Chung1991a}
F.~R.~K. Chung and R.~L. Graham, ``Quasi-random set systems,'' {\em Journal of
  the American Mathematical Society}, vol.~4, p.~151, 1 1991.

\bibitem{Chung1992a}
F.~R.~K. Chung and R.~L. Graham, ``Cohomological aspects of hypergraphs,''
  1992.

\bibitem{frankl1992uniformity}
P.~Frankl and V.~R{\"o}dl, ``The uniformity lemma for hypergraphs,'' {\em
  Graphs and Combinatorics}, vol.~8, no.~4, pp.~309--312, 1992.

\bibitem{rodl2007counting}
V.~Rödl and M.~Schacht, ``Regular partitions of hypergraphs: Counting
  lemmas,'' {\em Combinatorics Probability and Computing}, vol.~16,
  pp.~887--901, 11 2007.

\bibitem{rodl2007regularity}
V.~Rödl and M.~Schacht, ``Regular partitions of hypergraphs: Regularity
  lemmas,'' {\em Combinatorics Probability and Computing}, vol.~16,
  pp.~833--885, 11 2007.

\bibitem{Nagle2006}
B.~Nagle, V.~Rödl, and M.~Schacht, ``The counting lemma for regular k-uniform
  hypergraphs,'' {\em Random Structures and Algorithms}, vol.~28, pp.~113--179,
  3 2006.

\bibitem{Kohayakawa2003}
Y.~Kohayakawa and V.~Rödl, ``Szemerédi’s regularity lemma and
  quasi-randomness,'' {\em Recent Advances in Algorithms and Combinatorics},
  pp.~289--351, 2003.

\bibitem{Gowers2006}
W.~T. Gowers, ``Quasirandomness, counting and regularity for 3-uniform
  hypergraphs,'' {\em Combinatorics Probability and Computing}, vol.~15,
  pp.~143--184, 1 2006.

\bibitem{Gowers2007}
W.~T. Gowers, ``Hypergraph regularity and the multidimensional szemerédi
  theorem,'' {\em Annals of Mathematics}, vol.~166, pp.~897--946, 2007.

\bibitem{o'donnell_2014}
R.~O'Donnell, {\em Analysis of Boolean Functions}.
\newblock Cambridge University Press, 2014.

\bibitem{castro-silva2021quasirandomness}
D.~Castro-Silva, ``Quasirandomness in additive groups and hypergraphs,'' 2021.

\bibitem{chung1993communication}
F.~R. Chung and P.~Tetali, ``Communication complexity and quasi randomness,''
  {\em SIAM Journal on Discrete Mathematics}, vol.~6, no.~1, pp.~110--123,
  1993.

\bibitem{aharoni2018rainbow}
R.~Aharoni, M.~DeVos, S.~G.~H. de~la Maza, A.~Montejano, and
  R.~{\v{S}}{\'a}mal, ``A rainbow version of mantel's theorem,'' {\em arXiv
  preprint arXiv:1812.11872}, 2018.

\bibitem{alon1998hom}
N.~Alon and T.~Marshall, ``Homomorphisms of edge-colored graphs and coxeter
  groups,'' 1998.

\bibitem{Rothaus1976}
O.~S. Rothaus, ``{On “bent” functions},'' {\em Journal of Combinatorial
  Theory, Series A}, vol.~20, pp.~300--305, may 1976.

\bibitem{forre1990spectral}
R.~Forri{\'e}, ``The strict avalanche criterion: Spectral properties of boolean
  functions and an extended definition,'' in {\em Advances in Cryptology ---
  CRYPTO' 88} (S.~Goldwasser, ed.), (New York, NY), pp.~450--468, Springer New
  York, 1990.

\bibitem{Hatami2019}
H.~Hatami, P.~Hatami, and S.~Lovett, ``{Higher-order Fourier Analysis and
  Applications},'' {\em Foundations and Trends{\textregistered} in Theoretical
  Computer Science}, vol.~13, no.~4, pp.~247--448, 2019.

\bibitem{Guruswami2019}
V.~Guruswami, A.~Rudra, and M.~Sudan, {\em {Essential Coding Theory}}.
\newblock 2019.

\end{thebibliography}

\end{document}